\numberwithin{equation}{section}
 \renewcommand\section{\@startsection {section}{1}{\z@}%
     {-4.5ex \@plus -1ex \@minus -.2ex}%
     {2.3ex \@plus.8ex}%
    {\centering\scshape}}
\newcommand{\Z}{\mathbb{Z}}
\newcommand{\Q}{\mathbb{Q}}
\newcommand{\R}{\mathbb{R}}
\newcommand{\C}{\mathbb{C}}
\renewcommand{\P}{\mathbb{P}}
\newcommand{\Shend}{\mc{E}nd}
\DeclareMathOperator{\End}{End}
\newcommand{\mc}{\mathcal}
\newcommand{\F}{\mc{F}}
\newcommand{\h}{\mc{H}}
\newcommand{\hb}{\overline{\mc{H}}}
\newcommand{\Xb}{\overline{X}}
\newcommand{\Bb}{\overline{B}}
\newcommand{\Zb}{\overline{Z}}
\newcommand{\ov}{\overline}
\newcommand{ \ff} { \frac }
\newcommand{\be}{\begin{equation}}
\newcommand{\ee}{\end{equation}}
\renewcommand{\phi}{\varphi}
\newcommand{\res}[1]{\, \text{\rule[-1ex]{0.1ex}{2.8ex}}_{\, #1}}
\newcommand{\Nt}{\widetilde{N}}
\newcommand{\wt}{\widetilde}
\newtheorem{thm}{Theorem}[section]
\newtheorem*{thmintro}{Theorem}
\newtheorem{prop}[thm]{Proposition}
\newtheorem{lemma}[thm]{Lemma}
\theoremstyle{definition}
\newtheorem{rem}[thm]{Remark}
\newtheorem{defin}[thm]{Definition}
\newtheorem{cor}[thm]{Corollary}
\newtheorem{example}[thm]{Example}
\newtheorem{assumption}[thm]{Assumption}
\newtheorem{conjecture}[thm]{Conjecture}
\numberwithin{equation}{section}
\DeclareMathOperator{\codim}{codim}
\DeclareMathOperator{\id}{id}
\DeclareMathOperator{\Hom}{Hom}
\DeclareMathOperator{\Ext}{Ext}
\DeclareMathOperator{\re}{Re}
\DeclareMathOperator{\tr}{tr}
\DeclareMathOperator{\Sym}{Sym}
\DeclareMathOperator{\Supp}{Supp}
\DeclareMathOperator{\supp}{supp}
\DeclareMathOperator{\Pic}{Pic}
\DeclareMathOperator{\Gr}{Gr}
\DeclareMathOperator{\Jac}{Jac}
\DeclareMathOperator{\NS}{NS}
\DeclareMathOperator{\Fix}{Fix}
\DeclareMathOperator{\im}{Im}
\DeclareMathOperator{\inv}{inv}
\DeclareMathOperator{\Aut}{Aut}
\DeclareMathOperator{\Amp}{Amp}
\DeclareMathOperator{\Res}{Res}
\DeclareMathOperator{\Sing}{Sing}
\author[G. Sacc\`a]{Giulia Sacc\`a}
\address{Mathematics Department\\
Stony Brook University\\
Stony Brook, NY, 11794-3651}
\email{giulia.sacca@stonybrook.edu}
\subjclass[2010]{14D22 and 14J28}
\title[Relative compactified Jacobians and Enriques surfaces]{Relative compactified Jacobians of linear systems on Enriques surfaces}
\begin{document}

\begin{abstract} We study certain moduli spaces of sheaves on Enriques surfaces thereby obtaining, in every odd dimension, new examples of Calabi--Yau manifolds. We describe the geometry (canonical bundle, fundamental group, second Betti number and certain Hodge numbers) of these moduli spaces showing, in partial analogy to the well--known case of sheaves on K3 or Abelian surfaces, how the geometry of the surface reflects that of the moduli space itself.

%For every odd integer $2g-1$, with $g \ge 2$, we produce examples of abelian fibered Calabi-Yau manifolds in the following way.
%Given a polarized Enriques surface $(T, H)$, a genus $g$ linear system $|C|$ on $T$, and a non-zero integer $\chi$,  let $N$ be the moduli space of pure dimension one $H$-stable sheaves on $T$ with first Chern class equal to $[C]$ and Euler characteristic $\chi$ (the natural morphism $N \to |C|$ realizes $N$ as a relative compactified Jacobian of $|C|$). We show that for general $(T, H)$ that $N$ is a smooth $(2g-1)$-dimensional projective variety admitting an \'etale double cover of a Lagrangian subvariety of an appropriate moduli space of sheaves on the K3 cover on $T$, and that $N$ is an irreducible Calabi-Yau manifold, i.e.,  $\omega_N\cong {\mc O}_N$ and  $h^{p,0}(N)=0$ for $p\neq 0, 2g-1$. Moreover, under some technical assumption that can be verified for low values of $g$ and that is expected to hold in general, we prove that $\pi_1(N)\cong \Z/(2)$ and we compute the second Betti number $h^2(N)$.

\end{abstract}

\maketitle

\section*{Introduction}

Moduli spaces of sheaves on K3 surfaces are among the most studied objects in algebraic geometry. Part of their  interest lies in that they inherit the rich structure coming from the K3 surface itself. For example, by work of Mukai \cite{Mukai} the symplectic structure on the surface induces a holomorphic symplectic structure on the smooth locus of the moduli space. When smooth and projective, these moduli spaces provide examples of compact irreducible hyperk\"ahler manifolds \cite{Beauville83}, \cite{Huybrechts2}, \cite{Mukai}, \cite{Kieran7}. On the other hand, not much work has been done regarding the geometry of moduli spaces of sheaves on Enriques surfaces, even though it is natural to expect that their geometry is tightly related to that  of the Enriques surface itself and of the moduli spaces of sheaves on the covering K3 surface.
The present paper describes the geometry of a certain class of moduli spaces of sheaves on an Enriques surface $T$, namely the case of moduli spaces parametrizing pure dimension one sheaves on $T$. By considering pure dimension one sheaves whose support is linearly equivalent to a given curve $C$, these moduli spaces may be viewed as the relative compactified Jacobian of the linear system $|C|$. As such, they have a structure of fibration in abelian varieties.

One of the results of the paper is to show that the canonical bundle of these moduli spaces is \emph{trivial}. Though it is not hard to see that the canonical bundle is a torsion element in the Picard group, it is an interesting surprise that it is actually trivial, and not $2$--torsion as is true in the case of Enriques surfaces. This produces a series of new examples (in every odd dimension) of Calabi--Yau manifolds. Recall that one of the reasons why hyperk\"ahler manifolds have attracted attention is that they are, together with irreducible Calabi-Yau manifolds and complex tori, the building blocks of K\"ahler manifolds with trivial first Chern class \cite{Beauville83}. By proving that the universal cover of these moduli spaces are \emph{irreducible} Calabi--Yau manifolds, this paper thus produces a new series of building blocks for $c_1$--trivial manifolds.

The fact that the properties of these moduli spaces do not fully reflect those of the underlying surface makes the study of their geometry even more compelling.
The other main results of the paper, all of which use the abelian fibration structure, are the computations of the fundamental group, of the cohomology of the structure sheaf, and of the second Betti number.

To fix ideas, let us assume that we are considering a moduli space $N$ parametrizing sheaves whose Fitting support belongs to a linear system $|C|$, of genus  $g \ge 2$. Associating to every sheaf its Fitting support, defines the support (or Le Potier) morphism $N \to |C|=\P^{g-1}$, and endows the moduli space with a fibration structure in the $g$--dimensional Jacobians of the curves belonging to  $|C|$. Notice that these moduli spaces are always non empty. To sum up the main results, we have the following theorem.

\begin{thmintro}
Let $|C|$ be a genus $g \ge 2$ linear system on a general Enriques $T$, let $d \neq g-1$ be an integer, let $H$ be a generic polarization, and let $N \to |C|$ be the component of the moduli space of $H$--semistable sheaves on $T$ with Fitting support in $|C|$ and Euler characteristic equal to $\chi=d-g+1$ that contains sheaves supported on irreducible curves. Suppose the divisibility of $C$ in $NS(T)$ is coprime with $2(d-g+1)$ then,
\begin{enumerate}
\item $N$ is a smooth $(2g-1)$-dimensional  Calabi--Yau variety, i.e.,
\[
\omega_{N}\cong {\mc O}_{N}, \quad \text{and  } \quad h^{p,0}(N)=0 \quad \text{ for } \,\,\, p\neq 0, 2g-1.
\]
\item There is a surjection $ \Z/(2) \twoheadrightarrow \pi_1(N)$ which, under a natural assumption that holds in many cases (e.g. for low values of $g$ and in the case $|C|$ is a primitive linear system) and that is expected to hold in general (see Assumption \ref{assumption} and subsequent discussion),  turns out to be an isomorphism.
\item Under the same assumption, we show that for $g \ge 3$ 
\[
h^2(N)=11.
\]
\item For $g=2$, we get Calabi--Yau $3$--folds with the following Hodge diamond
\[
\begin{array}{cccccc}
&1&&\\
&0\,\,\,\,\,\,0&&\\
\,\,\,\,0&10&0\,\,\,\,&\\
1\,\,\,\,&10\,\,\,\,\,\,10&\,\,\,\,1&\\\
\end{array}
\]
\end{enumerate}
\end{thmintro}

Geometrically, we can realize the universal cover $\wt N$ of $N$ via the Stein factorization of the norm map. Since $\wt N$ is simply connected, it is an \emph{irreducible} Calabi--Yau manifold of dimension $2g-1$.

Further content of the paper regards the support morphism which, though appearing ubiquitously in algebraic geometry (e.g. the Hitchin system, the Beauville--Mukai integrable system) is not very well understood (especially over the locus of non--reduced curves). We give some factual and conjectural properties for the support morphism in the case of a linear system on a smooth projective surface  (Section \ref{fibration in jacobians}). We also mention a recent result of Yoshioka \cite{yoshioka-enriques} regarding this morphism in the case of primitive linear systems.
In the specific case of Enriques surfaces, we study the corresponding variation of Hodge structures (via degeneration of Hodge bundles) and compute the push--forward of the structure sheaf (\ref{higher direct images}). This involves monodromy calculations that are of independent interest \ref{irreducible monodromy non hyp}.

Beyond the case of the Hilbert scheme $T^{[n]}$ of $n$ points on $T$, whose canonical bundle (which is shown to be of $2$--torsion) and fundamental group are computed in \cite{Oguiso_Schroer11}, this paper is the first one that studies geometric properties of a moduli space of sheaves on an Enriques surfaces. The previously existing literature,  \cite{Kim98}, \cite{Kim06}, and \cite{Zowislok}  studies smoothness and irreducibility properties of moduli spaces of sheaves on Enriques surfaces, by realizing them as double covers of Lagrangian subvarieties of moduli spaces on the covering K3 surface (and after the first version of this paper appeared, also \cite{Nuer} where non emptiness is  studied and \cite{Nuer2} where the birational geometry of these moduli spaces is studied). Finally,  \cite{Hauzer} finds an explicit parametrization in the case when the moduli spaces are one--dimensional and shows how to relate moduli spaces of sheaves of arbitrary rank to those of low rank.

The techniques of this paper can be used also to study relative compactified Jacobians of linear systems on bi-elliptic surfaces. These moduli spaces produce another series of Calabi-Yau manifolds, whose geometry is the subject of a forthcoming paper by the author.

\subsection*{Acknowledgements} It is my pleasure to thank E. Arbarello, A. Bruno, M. de Cataldo, D. Huybrechts, J. Koll\'ar, L. Migliorini, M. Popa, V. Shende, C. Voisin and my advisor G. Tian for useful and fun conversations, and I. Dolgachev for a written communication. I would also like to express my gratitude  to K. Yoshioka for directing my attention to his paper \cite{yoshioka-enriques} and to the anonymous referee who pointed out several inaccuracies and a mistake in the first version of this paper and whose suggestions have greatly improved the paper. Part of this work was done while visiting the Beijing International Center for Mathematical Research in March 2010 and versions of the manuscript were prepared while visiting Bonn University in July 2012 and the Institut de Math\'ematique de Jussieu in October 2012. I am grateful to these institutions for the warm hospitality I received and for the great working conditions. This work is part of the author's Ph.D. thesis which received the Enriques prize, awarded by the Unione Matematica Italiana. I heartily thank this institution for honoring me with such a prize.

\section{Preliminaries} \label{prelims}

\subsection{Set up and notation}  \label{notation}
\subsubsection{}Throughout the paper $T$ will denote an Enriques surface, that is a smooth projective surface with
\[
H^1(T, \mc O_T)=0,
\]
and whose canonical bundle $\omega_T$ defines a non-trivial $2$-torsion element of the Picard group.  It is well known that $\pi_1(T)=\Z/(2)$ and that its universal cover, which will be denoted by $S$, is a K3 surface. The covering morphism
\[
f: S \to T,
\]
is the double covering induced by $\omega_T$. The deck involution 
\[
\quad \iota: S \to S,
\]
is antisymplectic, i.e., if $\sigma$ denotes the holomorphic symplectic form on $S$, then $\iota^* \sigma=-\sigma$.

By $C$ we will denote a curve in $T$.
Using the Riemann-Roch theorem one can see that if $C^2 \ge0$, then the line bundle ${\mc O}(C)\otimes \omega_T$ is also effective. We will denote by $C'$ a curve in $|\mc O(C)\otimes \omega_T|$.
%Clearly, the arithmetic genus of $C'$ is the same as that of $C$.
%More generally, for
%any sheaf $F$ on $T$, we set
%\be \label{notation F'}
%F':=F\otimes \omega_T.
%\ee
We also set
\[
D:=f^{-1}(C) \subset S,
\]

By the Hodge index theorem, if the arithmetic genus $g$ of $C$ satisfies $g \ge 2$ and $C$ is connected and reduced, then the covering
\[
f: D \to C,
\]
is connected. In particular, the two-torsion line bundle 
\be \label{eta}
\eta:={\omega_T}_{|C},
\ee
is not trivial.
When this is the case the genus of $D$ is equal to
\[
h= 2g-1.
\]
%If $C^2=0$ and $C=2e$ in $\NS(T)$ for some primitive class $e$, then $f^{-1}(C)$ is the disjoint union of two curves with zero self intersection.
If $g\ge 2$, then $\dim |C|=\dim |C'|=g-1$ and $\dim |D|=h$.
Moreover, we see that $|C|$ and $|C'|$, identified with their images in $|D|$ under the pullback morphism, 
are the two $\iota$-invariant linear subspaces of $|D|$.

\subsection{Pure dimension one sheaves on surfaces}
\subsubsection{}Let $(X,H)$ be a smooth projective polarized surface. Associated to any coherent sheaf $F$ on $X$, is the ideal sheaf
\[
\mc I_F:=\ker [\mc O_X \to \Shend(F)],
\]
defining the $\mc O_X$-module structure on $F$. The \emph{scheme theoretic support} of $F$, denoted $\Supp(F)$, is the scheme defined by $\mc I_F$. %Notice that the reduced variety associated to $\Supp(F)$ is just the closed subset $\{x \in X \ | F_x \neq 0 \}$.\\
A sheaf is called \emph{pure of dimension $d$} if for any subsheaf  $0 \neq G \subseteq F$, $\dim \Supp(G)=d$ . Let $F$ be a pure dimension one sheaf on $X$, then $\Supp(F) $ is a (possibly non-integral) curve, and $F=i_*L$, where $i: \Supp(F) \to X$ is the natural embedding and where $L$ is a sheaf on $\Supp(F)$ having no subsheaves  that are supported on points.\\
For pure dimension one sheaves, we will also consider another type of support, the \emph{Fitting support}, which is defined in the following way. A pure dimension one sheaf on a smooth projective has homological dimension one (\cite{Huybrechts-Lehn}, Chapter 1), i.e., there exists a length one locally free resolution of $F$,
\[
0 \to L_1 \stackrel{a}{\rightarrow} L_0 \to F \to 0.
\]
The Fitting support of $F$, denoted $\supp(F)$, is the subscheme of $X$ defined by the equation $\det a=0$. Contrary to case of the scheme theoretic support, the Fitting support behaves well in families. It is important to point out that the class in cohomology of the pure dimension one scheme $\supp(F)$ is exactly the first Chern class $c_1(F)$.

\subsubsection{} For pure dimension one sheaves stability with respect to the Hilbert polynomial defined by $H$ amounts to considering stability with respect to the slope function
\[
\mu_H(F)=\ff{\chi(F)}{c_1(F) \cdot H},
\]
where $\chi(F)$ denotes the Euler characteristic of $F$. So if $F$ is pure of rank one  supported on a reduced curve $\Gamma$, $F$ is $H$--semistable if and only if for every subcurve $\Gamma' \subset \Gamma$ we have
\be \label{stability pure rank one sheaves}
\ff{\chi(F)}{\Gamma \cdot H} \le  \ff{\chi(F_{\Gamma'})}{\Gamma' \cdot H},
\ee
where
\[
F_{\Gamma'}:=F \res{\Gamma'} \slash Tor(F \res{\Gamma'} )
\]
is the restriction of $F$ to the subcurve, modulo its torsion. We say that $H$ is $\chi$--general for a curve $\Gamma$ and an integer $\chi$ (or $d$--general for $d:=\chi-\chi(\mc O_\Gamma)$) if for every subcurve $\Gamma'\subset \Gamma$ the rational number $ \chi  \ff{\Gamma' \cdot H}{\Gamma \cdot H}$ is not an integer. This guarantees that $H$--semistability is equivalent to $H$--stability.
Recall that for any coherent sheaf $F$ on $X$, one can define the Mukai vector $v(F) \in H^*_{alg}(T, \Z)$. When $F$ is pure of dimension one it is given by
\[
v=v(F)=(0,c_1(F), ch_2(F))=(0, c_1(F), \chi(F)-\ff{1}{2}c_1(X)c_1(F)).
\]
Here $ch_2(F)$ denotes the degree two part of the Chern character of $F$. Let
\[
M_{v,H}(X)
\]
be the moduli space of $H$-semistable sheaves on $X$ with Mukai vector $v$.
Let $F$ be a sheaf with $v(F)=v$ and let $\mc H_{c_1(v)}$ be the Hilbert scheme parametrizing subschemes with  cohomology class equal to $\supp(F)$.
Since the Fitting support behaves well in families, we can define the Le Potier \cite{LePotier} or support morphism
\[
\begin{aligned}
\pi: M_{v,H}(X) & \longrightarrow \mc H_{{c_1(v)}},\\
F & \longmapsto \supp(F)
\end{aligned}
\]
which associates to a pure sheaf of dimension one its Fitting support. 
For a curve $\Gamma \subset X$ defining a point $[\Gamma] \in \mc H_{c_1(v)}$, the fiber $M_{v,H}(X)_{[\Gamma]}:=\pi^{-1}([\Gamma])$ is the Simpson moduli space of $H_{|\Gamma}$-semistable sheaves on $\Gamma$. For example, the fiber over a nodal curve is isomorphic to an appropriate compactfied Jacobian in the sense of Oda and Seshadri \cite{Oda_Seshadri79} (cf. Alexeev \cite{Alexeev04}).

If $h^1(X, \mc O_X)=0$, then every component of $\mc H_{c_1(v)}$ is just the linear system of some line bundle with that cohomology class. Hence, if  $v=(0,\Gamma, \chi)$  and we let 
\[ 
M_{v,H}(X, |\Gamma|)
\]
be the irreducible component of $\pi^{-1}(|\Gamma|)$ containing the locus of locally free sheaves with integral Fitting support, we can think of
\be \label{support morphism}
\pi: M_{v,H}(X, |\Gamma|)=\ov J_{H,d}(|\Gamma|) \to |\Gamma|
\ee
as the relative compactified Jacobian of degree $d=\chi-\chi(\mc O_\Gamma) $ of the linear system $|\Gamma|$. Indeed, the fiber over a reduced curve $\Gamma$ is just the degree $d$ compactified Jacobian of that curve with respect to the polarization $H$, i.e.
\[
\pi^{-1}([\Gamma])=\ov J_{H,d}(\Gamma)
\]
where $d=\chi-\chi(\mc O_\Gamma)$. Notice that if the curve is integral then the compactified Jacobian does not depend on $H$.

\begin{lemma} \label{existence stable sheaves}
If $\Gamma$ is an integral curve, the fiber $\pi^{-1}([\Gamma])$ is contained in the stable locus $M^s_{v,H}(X, |\Gamma|)$ of $M^s_{v,H}(X, |\Gamma|)$. In particular, if there exist an integral curve in the linear system $|\Gamma|$, then $M^s_{v,H}(X, |\Gamma|)$ is non-empty and changing polarization only changes $M_{v,H}(X, |\Gamma|)$ within its birational class.
\end{lemma}
\begin{proof}
If $\supp(F)$ is integral then there is no condition (\ref{stability pure rank one sheaves}) to be checked, i.e. any surjection $F \to G$ is an isomorphism.
\end{proof}

Often in this paper if we consider the restriction of (\ref{support morphism}) to a locus in $|\Gamma|$ parametrizing irreducible curves, we omit the polarization from the notation.

If the curve $\Gamma$ is reducible, then stability depends on the degree of $H$ on each component of $\Gamma$. For later use, we work out the characterization of semistability for pure rank one sheaves supported on  a curve that is the union of two smooth components meeting transversally.

\begin{lemma} \label{stability on two components}
Let $\Gamma=\Gamma_1+\Gamma_2$ be a curve that is the union of two smooth components meeting transversely in $\delta$ points. Let $F$ be a pure rank one sheaf on $\Gamma$  with $\chi(F)=\chi$ and let $\delta' \le \delta$ be the number of nodes where $F$ is locally free. Then $F$ is $H$--semistable if and only if
\[
\ff{h_1}{h} \chi \le \chi_1 \le \ff{h_1}{h} \chi+\delta',
\]
where $h_1=H \cdot \Gamma_1$, $h_2=H \cdot \Gamma_2$, $h=h_1+h_2$, and where $\chi_i=\chi(F_{\Gamma_i})$. Furthermore, $F$ is $H$-stable if and only if the inequalities are strict. As a consequence, if $H$ is general then semistability is equivalent to stability.
\end{lemma}
\begin{proof}
This follows readily from (\ref{stability pure rank one sheaves}) and the fact that $F$ fits into a short exact sequence
\[
0 \to F \to F_{\Gamma_1} \oplus F_{\Gamma_2} \to \C^{\delta'} \to 0
\]
so that $\chi+\delta'=\chi_1+\chi_2$ and we can rewrite inequality (\ref{stability pure rank one sheaves}) for $i=2$ in terms of $h_1$ an $\chi_1$.
\end{proof}

We will also need the following important result by Melo, Rapagnetta, and Viviani.

\begin{prop} \label{MRV on compactified Jac} \cite{Melo-Rapagnetta-Viviani14} Let  $\Gamma$ be a reduced locally planar curve of genus $g$, let $d$ be an integer, let $H$ be a $d$--general polarization for $\Gamma$, and let $\ov J_{H,d}(\Gamma)$ be the compactified Jacobian of degree $d$. Then $\ov J_{H,d}(\Gamma)$ is l.c.i of dimension $g$ and its smooth locus is precisely the locus
\[
J_{H,d}(\Gamma)
\]
parametrizing line bundles.
\end{prop}

As a consequence of these considerations we highlight the following well known Corollary that will be used in Section \ref{fundamental group}.

\begin{cor} \label{number of components}
Let $\Gamma=\Gamma_1+\Gamma_2$ be the union of two smooth components meeting in $\delta$ points. Then $\ov J_{H,d}(\Gamma)$ has $\delta$ irreducible components, parametrized by the $\delta$ pairs $(\chi_1, \chi_2)$ satisfying the condition of Lemma \ref{stability on two components}: For every such pair $(\chi_1, \chi_2)$ the corresponding component contains as a dense open subset the locus of line bundles whose restriction to $\Gamma_1$ and $\Gamma_2$ have Euler characteristic $\chi_1$ and $\chi_2$, respectively. 
%Restriction to $\Gamma_1$ and $\Gamma_2$ realizes each of these components as a $(\C^*)^{\delta-1}$--bundle over the product $J_{d_1}(\Gamma_1$ and $J_{d_2}\Gamma_2$, where $d_i=\chi_i-\chi(\mc O_{\Gamma_i})$. 
\end{cor}

Suppose now that $\chi(F)=-g+1$, where $g$ is equal to the genus of $\Gamma=\supp(F)$. In this case, there is a rational section
\be \label{section on irreducible}
s: \mc H_{c_1(v)} \dasharrow M_{v,H}(X),
\ee
defined on an open subset containing integral curves.

\begin{rem} \label{section defined} If $\Gamma$ is ample then we can also consider $H=\Gamma$ and it is not hard to see using (\ref{stability pure rank one sheaves}) that the structure sheaf of every curve is stable. It follows that the section is a regular morphism. This guarantees that there is a non--empty open set in the ample cone of polarizations for which the section is a regular morphism.
\end{rem}

\subsubsection{} Recall that if $F$ is a stable sheaf, the tangent space to the moduli space at a point $[F]$ is canonically isomorphic to $\Ext^1(F,F)$. 
Moreover (\cite{Artamkin88}, \cite{Mukai}), the obstructions to deforming $F$ on $X$ lie in
\be \label{obstructions}
\Ext^2(F, F)_0:=\ker[ \tr: \Ext^2(F,F) \to H^2(X, \mc O_X)],
\ee
where $\tr: \Ext^2(F,F) \to H^2(X, \mc O_X)$ is the trace morphism (cf. \cite{Huybrechts-Lehn}). Hence, by Serre duality, if $(X, H)$ is a polarized surface with $\omega_X=\mc O_X$ (i.e. a K3 or an abelian surface) and $F$ is a pure sheaf $H$-stable sheaf on $X$ with $v=v(F)$ then the moduli space $M_{v,H}$ is smooth at the point $[F]$.

\begin{thm}[\cite{Mukai},\cite{Gottsche-Huybrechts}, \cite{Kieran7},\cite{Huybrechts2},\cite{Yoshioka99}, \cite{Yoshioka}] \label{theorem-definition walls}
Let $X$ be a K3 surface, let $H$ be a polarization on $X$, let $F$ be a pure sheaf on $X$ and set $v=v(F)$. The locus $M_{v,H}^s(X)$ of $H$-stable sheaves on $X$ has a holomorphic symplectic form. If $v$ is primitive, then there exists a locally finite collection of real codimension one linear subspaces (called the $v$-walls) in the ample cone $\Amp(X)\otimes \Q$ such that, if $H$ is chosen outside the union of the $v$-walls then $H$--stability coincides with $H$--semistability (an $H$ satisfying this condition is called $v$-generic, see Definition \ref{definition of walls} below) and $M_{v,H}$ is an irreducible holomorphic symplectic manifold of K3$^{[n]}$-type.\footnote{By definition, this means that it is deformation equivalent to a Hilbert scheme of points on a K3 surface.} 
\end{thm}

Hence, if $v=(0, \Gamma, \chi)$ and $H$ is chosen to be $v$--generic, then $H$ is $\chi$--general for every curve in $|\Gamma|$.

Finally, we will need the following expression for the symplectic form on $M_{v,H}$: 
on the tangent space $T_{[F]} M_{v,H}(X)=\Ext^1(F,F)$ at a point $[F]$ corresponding to a stable sheaf $F$, the symplectic form is given by the composition
\be \label{symplectic form}
\begin{aligned}
\sigma:  \Ext^1(F, F) \times \Ext^1(F, F) & \stackrel{\cup}{\longrightarrow} & \Ext^2(F, F) & \stackrel{\tr}{\longrightarrow} H^2(S, {\mc O}_S)\cong \C, \\
(e, f) & \longmapsto & e\cup f \,\,\,\,\,\,\, & \longmapsto  \,\,\,\,\, \tr(e\cup f)
\end{aligned}
\ee
where the identification $H^2(S, {\mc O}_S)=H^2(S, \omega_S) \cong \C$ is Serre dual to the isomorphism
\be \label{isomorphism induced by sigma}
H^0(S, \omega_S)=\C \sigma \cong \C,
\ee
defined by the choice of a, unique up to scalar, symplectic form $\sigma \in H^0(S, \omega_S)$.

\subsection{Some facts about linear systems on Enriques surfaces.} \label{linear systems enriques}

In this section we collect a few (mostly known) results about linear systems on an Enriques surface that will be needed in the rest of the paper. After a general introduction, we will focus on linear systems on a general Enriques surface. For a more complete treatment we refer to \cite{Cossec} and \cite{Cossec-Dolgachev}.

\subsubsection{} \label{primitive} \label{general enriques 1} The N\'eron-Severi group\footnote{Following Definition 1.1.13 of \cite{Lazarsfeld1}, we let the N\'eron-Severi group of a smooth projective variety $X$ be the group of line bundles on $X$ modulo numerical equivalence. In particular, it is torsion free.}  of $T$ has rank $10$ and is isomorphic to the abstract lattice $U\oplus E_8(-1)$, where $U$ and $E_8(-1)$ denote the hyperbolic lattice and the positive definite $E_8$ root lattice, respectively.
The pullback homomorphism
\be
f^*: \NS(T) \to \NS(S),
\ee
is injective and in \cite{Namikawa1} it is shown that the image of the N\'eron-Severi group of $T$ in the N\'eron-Severi group of $S$ is a primitive sub lattice (of rank $10$). In particular, if we choose $C$ so that its class is primitive in $\NS(T)$, then so is the class of $D$ in $\NS(S)$. By abuse of notation, we say that a curve or a line bundle is primitive if its class in the N\'eron-Severi group is a primitive element of the lattice.
Moreover (\cite{Namikawa1}, by choosing the Enriques surface general in moduli we can ensure that
\be \label{NS general enriques}
f^*(\NS(T))=\NS(S).
\ee
When this is the case, $\iota^*$ acts as the identity on $\NS(S)$ and there are no smooth rational curves on $S$ or on $T$.
In particular, there are not effective line bundles of negative self intersection.

From now on, when we say that $T$ is general we will assume that (\ref{NS general enriques}) holds.

\begin{lemma} Let $T$ be a general Enriques surface.
\begin{enumerate}
\item If $L$ is a line bundle on $T$ with $L^2>0$ (respectively $L^2\ge 0$), then $\pm L$ is ample (respectively nef). 
%\item If $L^2\ge0$, then $|L|$ has no fixed component unless $|L.
\item If $L^2>0$ then the general member of $|L|$ is irreducible.
\end{enumerate}
\end{lemma}
\begin{proof}
Item (1) follows immediately from the Hodge index theorem and the fact that there are no curves with negative self intersection on a general $T$. Item (2) is \cite[Prop. 3.1.6]{Cossec-Dolgachev} and \cite[Cor 3.1.2]{Cossec-Dolgachev}.
\end{proof}

%In particular, if $L$ is a line bundle on $T$ with $L^2>0$ (respectively $L^2\ge 0$), then $\pm L$ is ample (respectively nef). Furthermore, using Proposition 3.1.6 of \cite{Cossec-Dolgachev}, we see that if $L^2\ge0$, then $|L|$ has no fixed component. Because of this and unless otherwise stated, all the linear systems considered in this paper will be assumed to be without fixed components. Moreover, if $L^2>0$ then the general member is irreducible.

%From now on we say that $T$ is general if (\ref{NS general enriques}) holds. 

%We end with the following lemma, that will be used in Section \ref{fundamental group}.

\begin{lemma} \label{irreducibility} Let $T$ be a general Enriques surface, and let $C \subset T$  be a primitive curve of genus $g \ge 2$. If $C$ is  irreducible, then so is its preimage $D=f^{-1}(C)$.
\end{lemma}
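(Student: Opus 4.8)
The plan is to reduce the irreducibility of $D$ to a statement about the action of the covering involution $\iota$ on $\NS(S)$, and then to play this against the primitivity of the class $[D]$. First I would record that, since $f\colon S\to T$ is finite \'etale of degree $2$, its base change $f\colon D\to C$ is again finite, flat and \'etale of degree $2$. Because $C$ is irreducible (hence integral) and $f|_D$ is finite and flat, $\mathcal{O}_D$ is a torsion-free $\mathcal{O}_C$-module, so every irreducible component of $D$ dominates $C$; as $\deg f=2$, this forces $D$ to have at most two irreducible components. If $D$ is irreducible we are done, so from now on I assume $D=D_1\cup D_2$ with $D_1\neq D_2$ and derive a contradiction.

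The next step is to determine how $\iota$ permutes the two components. Since $\iota$ is the deck transformation of $f$, it restricts to a fixed-point-free involution of $D$ lying over the identity of $C$, and the generic fibre of $f|_D$ is an $\iota$-orbit $\{q,\iota q\}$ of two distinct points. If $\iota$ preserved $D_1$, then for generic $q\in D_1$ the entire fibre $\{q,\iota q\}$ would lie in $D_1$, so $D_2$ would fail to dominate $C$, contradicting the previous paragraph. Hence $\iota$ must exchange the two components, $\iota(D_1)=D_2$; in particular $\iota^*[D_1]=[D_2]$ in $\NS(S)$.

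Now I would bring in the two structural facts isolated in the set-up. Because $T$ is general, $\iota^*$ acts as the identity on $\NS(S)$ (Remark \ref{general enriques 1}), so the relation above gives $[D_1]=[D_2]$ and therefore $[D]=[D_1]+[D_2]=2[D_1]$. On the other hand $[D_1]\neq 0$, since $D_1$ is a nonzero effective divisor and thus meets any ample class positively. Consequently $[D]$ would be divisible by $2$ in $\NS(S)$, contradicting the primitivity of $[D]=f^*[C]$ guaranteed by Remark \ref{primitive}. This contradiction shows that $D$ cannot be reducible, which is the assertion.

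I expect the only delicate point to be the component-counting step: one must make sure that every component of $D$ dominates $C$ (so that the degree-$2$ bound genuinely restricts $D$ to two components) and that $\iota$ truly exchanges rather than fixes them. Both follow from the flatness of the \'etale base change together with the freeness of $\iota$, and this is precisely where the hypothesis that $C$ is reduced and irreducible is used; everything afterwards is a one-line lattice computation. An alternative, purely cohomological route would be to prove directly that $\eta=\omega_T|_C\neq\mathcal{O}_C$, so that the \'etale double cover $D\to C$ is connected, by feeding the vanishing $h^1(\mathcal{O}_T(C))=0$ through the restriction sequence $0\to\omega_T(-C)\to\omega_T\to\omega_T|_C\to 0$ and using $h^0(\omega_T)=h^1(\mathcal{O}_T)=0$; but the lattice argument is shorter and relies only on facts already recorded in the remarks.
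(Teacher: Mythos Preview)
Your argument is correct and follows essentially the same route as the paper's own proof: assume $D$ is reducible, use the degree-$2$ \'etale cover to get exactly two components $D_1,D_2$ dominating $C$ and swapped by $\iota$, then invoke $\iota^*=\id$ on $\NS(S)$ (generality of $T$) to obtain $[D]=2[D_1]$, contradicting primitivity. Your write-up is more explicit about why each component dominates $C$ and why $\iota$ must exchange rather than fix the components, but the skeleton is identical; the alternative cohomological route you sketch at the end is not pursued in the paper.
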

\begin{proof}
Suppose $D$ breaks into the sum of
%Arguing by contradiction, suppose that $D$ has more than one irreducible component. Since every irreducible component of $D$ dominates $C$ and the morphism is $2:1$, it follows that $D$ breaks into
two irreducible components $D_1$ and $D_2$. Since $D_1$ and $D_2$ are interchanged by the involution and since by assumption $\iota^*$ acts as the identity on $\NS(S)$, it follows that $D_1 \sim D_2$ and hence that $D\sim 2D_i$, contradicting the fact that $C$, and thus $D$ by \ref{primitive}, is primitive.
\end{proof}

We should also point out that for the conclusion of this lemma to hold, it is sufficient to assume that the class of $C$ is not divisible by $2$.

Contrary to what happens for positive genus where the dimension of an effective linear system of genus $g$ is equal to $g-1$, if $L$ is an effective linear system with $L^2=0$, then the dimension of $|L|$ depends on the divisibility of the class of $L$ in $\NS(T)$. It is well known \cite[\S 1.6]{Cossec} that if $L$ is a primitive, then $L=\mc O(e)$ for a primitive elliptic curve and $|L|=\{e\}$. According to the notation introduced in Subsection \ref{notation}, we denote by
\[
e'
\]
the (unique) curve in the linear system $L \otimes \omega_T$. If $L=\mc O(2e)$, then $|L|$ is a pencil, whose general fiber is a smooth elliptic curve and that has exactly two double fibers
\[
e \quad \text{ and } \quad e'.
\]
This shows that  the canonical bundles of $T$ is equal to the difference of the two half fibers, i.e., $\omega_T=\mc O(e-e')$.

%The curves $e$ and $e'$ are called the half fibers of the pencil.
From these considerations one can deduce (see \cite[Thm 1.5.1]{Cossec}) that if $L=\mc O(ke)$ with $k \ge 2$, then
\[
|L|\cong \Sym^{\lfloor \ff{k}{2} \rfloor}|2e|
\]
and $L$ has a fixed component if and only if $k$ is odd. Primitive elliptic curves and elliptic pencils play a big role in the study of linear systems on Enriques surfaces.

\begin{defin}
A genus $g \ge 2$ linear system $|C|$ on an Enriques $T$ is called hyperelliptic if $g=2$ or if the map $T \dashrightarrow \P^{g-1}$ defined by $|C|$ is of degree $2$ onto a rational normal scroll of degree $g-2$ in $\P^{g-1}$.
\end{defin}

The following proposition gives a very useful characterization of hyperelliptic linear systems

\begin{prop}\cite[Prop. 4.5.1 and Cor. 4.5.1]{Cossec-Dolgachev} \label{hyperelliptic enriques}
Let $T$ be an Enriques surface, and let $C\subset T$ be an irreducible curve with $C^2=2g-2 \ge 2$. The following are equivalent,
\begin{enumerate}
\item The linear system $|C|$ is hyperelliptic;
\item $|C|$ has base points;
\item There exists a primitive elliptic curve $e_1$ such that $C\cdot e_1 =1$.
\end{enumerate}
Moreover, if $T$ is general then $|C|$ is hyperelliptic if and only if $C\equiv (g-1)e_1+e_2$ for two primitive elliptic curves with $e_1 \cdot e_2=1$.
\end{prop}

Up to tensoring by $\omega_T$, the two elliptic curves are determined by $|C|$.

It is worth mentioning here that if $|C|$ is hyperelliptic then its base locus (which is non empty by the proposition above) consists of two simple points, described by the following Lemma 
\begin{lemma} \label{base points}
Let $C=(g-1)e_1+e_2$ be a hyperelliptic linear system. The two simple base points of $|C|$ are 
\[ \begin{aligned}
e_1 \cap e_2' \text{ and } e'_1 \cap e_2 & \text{ if } g-1 \text{ is odd }\\
e_1 \cap e_2 \text{ and } e'_1 \cap e_2 & \text{ if } g-1 \text{ is even }
\end{aligned}
\]
\end{lemma}
\begin{proof}
The proof is straightforward after noticing that $\mc O(C) \res{e_1}$ is equal to $\mc O(e_2') \res{e_1}$ or $\mc O(e_2) \res{e_1}$ depending on whether $g-1$ is odd or even, and similarly for the restriction to $e_1'$.
\end{proof}

%In Section \ref{fundamental group} we will need the following corollary.

\begin{cor} \label{section}
Let $p: \mc C \to |C|$ be the universal family of curves of a hyperelliptic linear system. Then $\mc C$ is smooth and $p$ has two sections.
\end{cor}
\begin{proof}
By the Lemma above, $|C|$ has two simple baspoints and hence we can identify this linear system as a base point free linear system on the blow up $\wt T$ of $T$ at the two base points. Since the universal family $ \mc C \to T$ factors via the blow up morphism $\wt T \to T$, we see that $\mc C$ is smooth. The statement about the sections also follows readily from the Lemma above.
\end{proof}

 Finally, it is known that the general curve in $|C|$ is a smooth hyperelliptic curve \cite[Cor 4.5.1]{Cossec-Dolgachev}.

\begin{cor} \label{smooth general member}
Let $L$ be an effective line bundle on a general $T$ with $L^2>0$. Then the general member of $|L|$ is a smooth connected curve.
\end{cor}

We will need the following observation regarding intersection of curves on a general $T$. If $C$ and $\Gamma$ are two curves such that $C \cdot \Gamma=1$, then one of them has to be a primitive elliptic curve and the other one either another primitive elliptic curve or a hyperelliptic curve: If they are both of genus $1$ there is nothing to show. So let us suppose that one of them, say $C$, is of genus $g  \ge 2$. Up to moving $C$ in its linear system we can assume it to be smooth. Suppose that $\Gamma$ is not primitive elliptic so that it moves in a positive dimensional linear system. If $|\Gamma|$ is base point free, then $\mc O(\Gamma) \res{C}$ would cut a positive dimensional degree one linear system on $C$,  providing a contradiction since $C$ is not rational. If $|\Gamma|$ has a base point, then by the proposition above it has to be hyperelliptic. It follows that there are two primitive elliptic curves such that $|\Gamma|=|ne_1+e_2|$, with $n=g(\Gamma)-1$. But since by the Hodge index theoerm $C \cdot e_i >0$, for $i=1$ and $2$, we get a contradiction to the fact that $C \cdot \Gamma=1$.

% irreducible curve of genus $g  \ge 2$ and $\Gamma$ is a curve such that $C \cdot \Gamma=1$, then $\Gamma$ has to be a primitive elliptic curve and $C$ hyperelliptic. Indeed, up to passing to a deformation of $C$ we can assume it to be smooth. If $\Gamma$ were not primitive elliptic, then it moves in a positive dimensional linear system. If $|\Gamma|$ is base point free, then $\mc O(\Gamma) \res{C}$ would cut a positive dimensional degree one linear system on
%

%\begin{prop} \label{base points}
%A linear system $|C|$ has base points if and only if it is hyperelliptic. Moreover, if this is the case and $T$ is general, then $C\equiv n e_1+e_2$ for two primitive elliptic curves with $e_1 \cdot e_2=1$. 
%\end{prop} 
%\begin{proof}
%Proposition 4.5.1 and Corollary 4.5.1 in \cite{Cossec-Dolgachev}.
%\end{proof}
%

%\begin{prop} \label{very ample linear systems}
%If $T$ is general then $|C|$ is very ample if and only if there is no primitive elliptic curve $f$ satisfying $C \cdot f\le2$, i.e., if and only if $|C|$ is not hyperelliptic nor equivalent to $|nf+e|$ where $e$ and $f$ are two primitive elliptic curves with $e \cdot f=2$ and $n \ge 1$.
%\end{prop}
%\begin{proof}
%The first statement is Theorem 1.2 in \cite{Knutsen}, and the second statement follows as in the proof of Lemma 2.12 of \cite{Knutsen-Lopez-09}.
%\end{proof}

In the rest of the paper we will need some knowledge about singular curves in linear systems on a general $T$. Given a linear system $|C|$ on $T$ with smooth connected general member, we define the discriminant
\[
\Delta \subset |C|
\]
of $|C|$ to be the closed codimension one subset of $|C|$ parametrizing singular members (for the sake of this paper it will be enough to consider $\Delta$ with its reduced induced structure). The following three propositions describe the curves parametrized by the general points of the discriminant.

\begin{prop} \label{reducible in codimension one}
Let $|C|$ be a genus $g \ge 2$ linear system on a general Enriques surface $T$. Then $|C|$ has reducible members in codimension one  if and only if $|C|$ is hyperelliptic. \end{prop}
\begin{proof}
Consider a reducible member of the form
\be \label{decomposition}
C_1+C_2,
\ee
where $C_1$ and $C_2$ have no common components
and set $\nu=C_1\cdot C_2$. 
For $i=1,2$, we let $g_i$ be the arithmetic genus of $C_i$. We have
\[
g=g_1+g_2+\nu-1,
\]
so $\dim |C|=g_1-1+g_2-1+\nu$.
Since the irregularity of $T$ is zero, the locus of curves in $|C|$ having a decomposition like that in (\ref{decomposition}) admits a finite surjective morphism from the product of linear systems $|C_1|\times |C_2|$.
Case by case, we will compare the dimension of $|C_1|$ and $|C_2|$ with that of $|C|$. Clearly, if $\dim |C_i|=g_i-1$  for both $i=1$ and $2$ (as is the case the two curves have either positive genus greater than one or are primitive elliptic), then
\[
\dim |C|= \dim |C_1|+ \dim |C_2|+\nu=\dim |C_1|\times |C_2|+\nu.
\] 

It follows that the codimension of the locus of curves of this type is equal to one if and only if $\nu=1$. If this is the case, then by the remarks following Corollary \ref{smooth general member} at least one of the two curves, say $C_1$, has to be a primitive elliptic curve. This implies that $C \cdot C_1=1$ and hence  by the remarks following Corollary \ref{smooth general member}, that $|C|$ is hyperelliptic.

Next, consider the case where $C_1\in |s e_1|=\P^{\lfloor \ff{s}{2}\rfloor}$ for some primitive elliptic curve $e_1$ and some integer $s \ge 1$. If $g_2 \ge 2$, we have $\dim |C_1|\times |C_2|=\lfloor \ff{s}{2}\rfloor+g_2-1$ and $\dim |C|= g_2-1+ \nu$. If $C_2=te_2$ with $t \ge 1$, then $\dim |C_1|\times |C_2|=\lfloor \ff{s}{2}\rfloor+\lfloor \ff{t}{2}\rfloor$ and $\dim|C|=\nu$.
Either way we have
\[
\dim  |C| - \dim (|C_1|\times |C_2|)=\left\{\begin{aligned} & \nu-\lfloor \ff{s}{2}\rfloor, && \text{if} \,\, g_2\ge 2\\ &\nu-\lfloor \ff{s}{2}\rfloor-\lfloor \ff{t}{2}\rfloor, && \,\text{if} \,\, g_2=1. \end{aligned}   \right.
\]
In the first case, since $\nu=s\nu'$ and $\nu' \ge 1$, then we are done, unless $a)$ $s=1$ and $\nu'=1$, or $b)$ $s=2$ and $\nu'=1$. In case $a)$, $C_1$ is primitive elliptic and $C \cdot C_1=1$, so we are in the hyperelliptic case. In case $b)$, $C_2 \cdot e_1=1$ so the   curve $C_2$ is  hyperelliptic and we can write it as $\nu e_1+e_2$, with $e_1\cdot e_2=1$. It follows that $|C|=|(\nu+s)e_1+e_2|$ is hyperelliptic. 

As for the second case, we can set $\nu=st\nu'$. It follows that we are done unless $\nu'=1$, $s=2$ and $t=1$ (or $s=1$ and $t=2$). This means that $C_2=e_2$, with $e_1\cdot e_2=1$ and that $|C|=2e_1+e_2$ is hyperelliptic (or $|C|=|2e_2+e_1|$).

\end{proof}

\begin{prop}  \label{Verra}
Let $|C|$ be a genus $g \ge 3 $ non--hyperelliptic linear system on a general Enriques surface. If $|C|\neq |2(e_1+e_2)|$ for two primitive elliptic curves $e_1$ and $e_2$ with $e_1 \cdot e_2=1$, then there is an open dense subset of the discriminant parametrizing irreducible curves with one single node. 
\end{prop}

\begin{proof}
It is well known that if $|C|$ is very ample then $|C|$ contains a Lefschetz pencil (e.g. \cite[\S 2.1 II]{Voisin2}), so we only need to prove the statement in the case where $|C|$ is not very ample. By \cite[Thm 1.2]{Knutsen} a curve $C$ of genus $g \ge 2$ on a general Enriques surface is very ample if and only if  there is no primitive elliptic curve $e$ such that $C \cdot e  =1$ or $C \cdot e  =2$. Since in the first case $|C|$ is hyperelliptic, we only need to prove the statement in the second case.
 Under this assumption, we claim that the linear system $|C|$ satisfies one of the following
\begin{enumerate}
\item $|C|$ defines a degree $4$ morphism $\psi: T \to \P^2$.
\item $|C|$ defines a degree one morphism $T \to R \subset \P^{g-1}$ onto a non normal surface.
%\item $|C|=|e_1+e_2+e_3|$ for some primitive elliptic curves $e_1$, $e_2$, and $e_3$ with $e_i \cdot e_j=1$, for $i\neq j$.
\item $|C|=|2(e_1+e_2)|$ for some primitive elliptic curves $e_1$ and $e_2$ with $e_i \cdot e_j=1$, for $i\neq j$.
\end{enumerate}
Indeed, by \cite[Theorem 4.6.3]{Cossec-Dolgachev} either (1) happens, or $|C|$ defines a birational morphism onto a non normal surface with double lines, or $|C|$ is base point free and defines a degree two map. These linear systems are called superelliptic (see page 228 of \cite{Cossec-Dolgachev}). If this is the case, Proposition 4.7.1, Theorem 4.7.1, and Theorem 4.7.2 of \cite{Cossec-Dolgachev} imply that case (3) occurs (case (i) of Thm. 4.7.2 is excluded using the fact that $T$ contains no rational curves). For the first two cases above, let us now prove that in codimension one only curves with one single node can occur. Case (1) uses the description of the ramification locus of $\psi$ provided in \cite{Verra-83}. Verra shows that, generically, the ramification locus is equal to a degree $12$ curve $\Gamma \subset \P^2$ that has $36$  cuspidal points and no other singularity. Since lines that are tangent to a smooth point of $\Gamma$ appear only in codimension one and since a plane curve has only a finite number of bitangents or flexes, we only need to show that if $\ell$ is a general line through a cusp $\gamma \in \Gamma$, then $\psi^{-1}(\ell)$ has at worst one node. This is proved in Lemma \ref{cuspidi} below.

Let us pass to case (2). Since the map is birational, a curve in $|C|$ can be singular only if it is the preimage $\psi^{-1}(H)$ of a hyperplane section $H$ of $R$ that  either is tangent to $R$ at a smooth point (or is the limit of such), or passes through the singular locus of $R$. We can argue as in \cite[\S 2.1 II]{Voisin2} (see remarks after Cor. 2.8 of loc. cit, which apply to a smooth quasi--projective  variety) and conclude that the general hyperplane that is tangent to the smooth locus of $R$ has one single ordinary double point (i.e. hyperplanes that are have two ordinary double points or other singularities appear in codimension $2$). We are left with analyzing what happens over the hyperplane sections through the singular locus $\Sing R$. Notice that, by Bertini, these hyperplane sections are smooth outside of $\Sing R$ so  we only have to understand what happens over the singular locus.

To do so, we first have to understand what the singular locus of $R$ looks like: using Propositions 3.6.2 and 3.6.3 of \cite{Cossec-Dolgachev} we may assume that $|C|$ is one of the following
\begin{enumerate}
\item[(a)] $|ke+2e_1|$, where $e$ and $e_1$ are primitive elliptic curves with $e \cdot e_1=1$ and $k \ge 3$ (genus $g=2k+1$).
\item[(b)] $|ke+e_1|$, where $e$ and $e_1$ are primitive elliptic curves with $e \cdot e_1=2$ and $k \ge 2$ (genus $g=2k+1$).
\item[(c)] $|ke+e_1+e_2|$, where $e,e_1$, and $e_2$ are primitive elliptic curves with $e \cdot e_1=e \cdot e_2=e_1 \cdot e_2=1$ and $k \ge 1$  (genus $g=2k+2$).
\end{enumerate}
(in the first two cases we have set $k \ge 3$ and $k \ge 2$ to prevent from falling in the cases (3) and (1) above). Let us do the  case (c) with $k=1$, which is the well known realization of an Enriques surface as the normalization of a sextic surface $R$ in $\P^3$ that passes doubly through the edges $l_1, \dots, l_6$ of a tetrahedron \cite{Dolgachev16}. The edges of the tetrahedron are the images of the elliptic curves $e, e', e_1, e_1', e_2, e_2'$ (indeed, the linear system $|C|$ restricts to a $g^1_2$ on each of these elliptic curves).  In addition to the double lines, the surface $R$ has $4$ pinch points on each of the edges of the tetrahedron (the ramification points of the $g^1_2$'s on the elliptic curves) and $4$ triple points at the $4$ vertices of the tetrahedron. In particular, the preimage of a general point on one of the lines $l_i$ consists in $2$ points, the preimage of the pinch points consists in one single point, and the preimage of the triple points consists in $3$ points. If $H$ is a general hyperplane section through one of the pinch points, then it acquires a cusp. However, since $T \to R$ is the normalization morphism, we can see that in this case $\psi^{-1}(H)$  is smooth. Indeed, $T$ can be locally identified with the proper transform of $R$ under the blow up of $\P^3$ along the double line, and it is clear that this blow up normalizes a general cuspidal curve passing through a pinch point. A general hyperplane section through one of the triple point will be a curve with a triple point with three distinct branches which are separated under the map $\psi$. It follows that curves in $|C|$ that have worst singularities than one simple node appear in codimension two.

The other cases can be dealt with analogously: using  Lemma \ref{birational map} below and Definition 1.43 (and discussion thereafter) of \cite{Kollar-Kovacs}, $R$ has two double lines (and is generically normal crossing along them)  and pinch points. Hence, a hyperplane section $\Gamma$ of $R$ is singular wherever it is tangent to the smooth locus of $R$ and is also singular along its intersection with the two double lines. A tangent hyperplane section that does not contain the double locus, will thus be normalized under the morphism $T \to R$. 
 It follows that the discriminant locus of these linear system is equal to the closure of the locus of hyperplane sections of $R$ that are tangent along the smooth locus of $R$ which, again using  \cite[\S 2.1 II]{Voisin2}, is irreducible.
 \end{proof}
 
 \begin{cor} \label{discriminant locus irr}
Let $|C|$ be as in case $(2)$ of Proposition \ref{Verra} above. Then the discriminant locus of $|C|$ is irreducible. Moreover, if $|C|$ is as in case $(1)$ of Proposition \ref{Verra}, the number of irreducible components of the discriminant locus is equal to $37$.
\end{cor}
\begin{proof}
The second statement follows from discussion of Verra's result in the proof of Proposition \ref{Verra}. As for the second statement, we can argue as follows. First recall that the closure of locus of hyperplane sections that are tangent to a smooth quasi--projective variety (as is the smooth locus of $R$) is irreducible. Second, notice that for any hyperplane $H \subset \P^{g-1}$ that does not contain the two lines of the non--normal locus of $R$, the curve $H\cap R$ has only nodes or cusps along the two lines (depending if it meets a line at a regular double point or at a pinch point) and hence it is normalized under the map $ T \to R$ (notice that the hyperplane sections that contain the lines appear in higher codimension).
\end{proof}

%%%Moreover, it should be clear from the proof above that, under the assumptions of the proposition, the discriminant of $|C|$ is irreducible, except in case (1) where one needs a further argument: on one side using Pl\"ucker's formula one can see that the degree of the dual curve of the ramification curve is $24$, on the other side, using Euler characteristics one can check that a general pencil in $|C|$ has to contain exactly $24$ curves with one single node. This implies that there can't be extra components of $\Delta$ other than the dual curve. CHECK!!!

We remark that since in the case (3) the class of $C$ is divisible by $2$, this is not a case we will consider (a necessary condition for the assumptions of Theorem \ref{thm smooth} to hold is that the class of $|C|$ is not divisible by $2$).

\begin{lemma} \label{cuspidi}
Let $|C|$, $\psi: T \to \P^2$, and $\Gamma$ be as in case (1)  above, and let $\gamma \in \Gamma$ be a cuspidal point. Then for a general line $\ell$ through $\gamma$, the curve $C=\psi^{-1}(\ell)$ has at worst one node.
\end{lemma}
\begin{proof}  Since $\ell$ is a  general line through $\gamma$, we may assume that it is not tangent to $\Gamma$ away from $\gamma$ and hence that $C$ is smooth away from $\psi^{-1}( \gamma)$.
Let $\wt C \to C$ be the normalization of $C$ and let us consider the induced morphism $ \wt \psi: \wt C \to C \to \ell$. Suppose that $C$ has a cusp over $\gamma$, so that $\wt C$ has genus $3-1=2$ and  $\wt \psi$ has a ramification point over $\gamma$. Applying Riemann--Hurwitz to $\wt \psi$, we can compute the ramification $r$ of $\wt \psi$: $r=2 \deg \psi + 2g-2= 2 \cdot 4+2=10$. Since $\ell$ already meets the ramification curve $\Gamma$ in $10$  points other than $\gamma$, $\wt \psi$ cannot ramify over $\gamma$. Hence, if $C$ has a double point over $\gamma$ it must be a node. The case where $\wt C$ has worst singularities is dealt with analogously, using Riemann--Hurwitz to compute the ramification divisor and finding a contradiction on the number of ramification points of $\wt \psi$ outside of $\gamma$.
\end{proof}

\begin{lemma} \label{birational map}
Let $|C|$ and $\psi: T \to R \subset \P^{g-1}$ be as in (a), (b) or  in (c) with $k \ge 2$ above (so that $g \ge 5$). Then $\psi$ is an isomorphism outside of the two elliptic curves $e$ and $e'$, which are mapped $2:1$ onto two double lines in $R$. 
\end{lemma}
\begin{proof}
It is easy to check that the restriction of $|\mc O(C)|$ to $ {e}$ and $e'$ is a $g^1_2$.  Also notice that since two sublinear systems $|I_e(C)|=|\mc O(C-e)$ and $|I_{e'}(C)|=|\mc O(C-e')$ are different subspaces of $|\mc O(C)|$, it follows that $|\mc O(C)|$ separates the two curves $e$ and $e'$. In the case when $C^2 \ge 10$ (which corresponds precisely to $k \ge3$ in cases  $a)$ and $b)$ above and to $k \ge 2$ in case $c)$), we can use Reider's theorem together with our assumptions on $T$ and on $|C|$, to conclude that $\psi$ separates points and tangent directions outside of an effective curve whose components $E$ satisfy $E^2=0$ and $E\cdot C=2$. Since the component of such a curve have to be equal to $e$ or to $e'$, this solves the question in the case when $C^2 \ge 10$. We thank the referee for suggesting the use of Reider's theorem.
We are thus only left to consider the case $b)$, with $k=2$. 
This corresponds to the case when $|C|=|2e+e_1|$ and $e$ and $e_1$ are primitive elliptic curves with $e \cdot e_1=2$. As mentioned on page 278 of \cite{Cossec-Dolgachev} the projective model associated to this linear system is a non--normal octic surface in $\P^4$ with two double lines (images of $e$ and $e'$). We include a proof of the fact that, for general $T$, such an octic surface is smooth away from the two lines. We need to show that for every length $2$ point $z $ on $ T$ that is scheme theoretically not contained in $ \{e \cup e'\}$, there is a surjective map
\[
\alpha_z: H^0(\mc O(C)) \to H^0(\mc O_z(C)). 
\]
We will prove this with the help of the linear systems $D:=C-e=e+e_1$ and $D'=C-e'=e'+e_1$. Notice that,  as in case $(1)$ of Proposition \ref{Verra}, $|D|$ defines a degree $4$ morphism $\phi_D: T \to \P^2$ (cf. \cite[Thm 4.6.3]{Cossec-Dolgachev}). Given $z$ as above, then either $z$ is scheme-theoretically contained in a fiber  of $\phi_D$, or it is not. Suppose it is, so that $ z \subset \phi_D^{-1}(p)$, for some $p \in \P^2$. Then $|I_{z|T}(D)|=\phi_D^*|I_{p|\P^2}(1)|$ is a pencil and hence there is an irreducible curve $D_z \in |D|$ containing $z$ (here we denote by $I_{X|Y}$ the ideal sheaf of a closed subscheme $X \subset Y$). We claim that $H^1(D_z,I_{z|D_z}(C))=H^0(D_z,I_{z|D_z}^\vee(-C)\otimes \omega_{D_z})=0$. This immediately implies that $H^0(D_z, \mc O_{D_z}(C)) \to H^0(\mc O_z(C))$ is surjective, which,  since $H^1(T, \mc O(C-D_z))=H^1(T, \mc O(e))=0$,  implies that $\alpha_z$ is also surjective. To prove the claim, suppose by contradiction that there is a non zero section $\sigma \in H^0(D_z,I_{z|D_z}^\vee (-C)\otimes \omega_{D_z})= H^0(D_z,I_{z|D_z}^\vee(-e'))$. Then $\sigma $ induces an injective morphism $\mc O_{D_z} \to I_{z|D_z}^\vee(-e')$ which we can dualize (notice that $ I_{z|D_z}$ is reflexive because it is a torsion free sheaf on a locally planar curve)
 to get an injection 
\[
\sigma^\vee: I_{z|D_z}(e') \to \mc O_{D_z}.
\]
Using the fact that $H^1(e_1)=0$, we see that $|\mc O(e') \res{D_z}|=\{e'\cap D_z\}$. The existence of a non--zero $\sigma^\vee$ as above implies hence that $z\supset e'\cap D_z$. Since the length of $z$ and of $e'\cap D_z$ are both equal to $2$, this implies that $z= e'\cap D_z$, which contradicts the fact that $ z$ is  scheme theoretically not contained in $e' \cup e$.

Let us now suppose that $z $ is not contained in a fiber of $\phi_D$. Then the morphism $ \beta_z: H^0(\mc O(D)) \to H^0(\mc O_z(D))$ is surjective. Consider the morphisms
\[
\gamma: H^0(\mc O(D)) \to H^0(\mc O(C)), \quad \text{and} \quad \gamma_z:  H^0(\mc O_z(D))=H^0(\mc O_z(C-e)) \to H^0(\mc O_z(C)).
\]
Since $ \gamma_z \beta_z=\alpha_z \gamma$ and $\beta_z$ is surjective, we have $\im (\gamma_z) \subset \im (\alpha_z)$. Notice that $\gamma_z$ vanishes along $e\cap z$. There are three cases. The first is when $z \cap e= \emptyset$ so that  $\gamma_z$ is an isomorphism and hence $\alpha_z$ is surjective. The second is when $z$ is a length two point supported on $e$ (but not scheme theoretically contained in $e$). Then, under the identification $\mc O_z(C) \cong \C[\epsilon]\slash (\epsilon^2)$, we see that $\im(\gamma_z)$ is the maximal ideal $(\epsilon)$ and to conclude that $\alpha_z$ is surjective we only need to notice that $|C|$ is base point free and hence that there is a section not vanishing on the support of $z$. The third case is when $z=z_1 \cup z_2$ with $z_1 \in e$ and $z_2 \notin e$. Then $\im(\gamma_z) =\mc O_{z_2}$. In this case, we can use the linear system $D'=C-e'=e'+e_1$ and consider instead the morphism $\gamma'_z:   H^0(\mc O_z(D'))=H^0(\mc O_z(C-e')) \to H^0(\mc O_z(C))$. The same reasoning as above, together with the fact that $ z_1 \notin e'$, shows that  we  have $\im(\gamma'_z)\supset \mc O_{z_1} $. Since $\im(\alpha_z) \supset \im(\gamma'_z)$, this shows that also in the third case $\alpha_z$ is surjective and concludes the proof.

\end{proof}

\begin{prop} \label{discriminant locus}  
%(1) If $|C|$ is not hyperelliptic,  the discriminant locus $\Delta$ is irreducible and the general pencil is Lefschetz;\\
(1) Let $|C|$ be a hyperelliptic of genus $g \ge 3$. Then $\Delta \subset |C|$ is the union of four irreducible components $\Delta_1, \Delta_2, \Delta_3, \Delta_4$. The general point of the first two components parametrizes curves that are the union of two smooth curves meeting transversally in one point, the general point of the third component parametrizes curves that are union of two smooth components meeting transversally in two points, and the general point of the fourth component parametrizes singular, but irreducible, curves. Moreover, the general curve parametrized by this component has only one node. \\
(2) If $g(C)=2$, then $|C|$ is a pencil which for general $T$ has exactly $18$ singular members, $16$ of which are irreducible with one node and $2$ of which are reducible, consisting of two elliptic curve meeting transversely in one point.
\end{prop}
\begin{proof}
%\item[(1)] By Proposition \ref{very ample linear systems} we only have to prove this fact for $C \equiv nf+e$. We do so by induction, using the step $n=1$ that has been proved by Verra in \cite{Verra-83}. Here, the author proves that the morphism defined by $|e+f|$ maps the surface $4:1$ onto $\P^2$, with an irreducible discriminant curve whose general point parametrizes an irreducible curve with one node. For $n \ge 2$, we argue as follows. Let $\mc C_n \subset T \times \P^{2n}$ be the universal family of the linear system $|nf+e|$, and set $Z_n=\{ (t, C) \,\, | \,\, t \in \Sing(C) \} \subset \mc C_n$. It is enough to prove that $p_n: Z_n \to T$ is a fibration in $({2n-3})$-dimensional projective spaces. Since $r_n: \mc C_n \to T$ is fibered in $\P^{2n-1}$, it is enough to prove that for any $t \in T$, the fiber $p_n^{-1}(t)$ has codimension $2$ in $r_n^{-1}(t)$. Consider the embedding $j: \mc C_{n-1} \to \mc C_n$ given by $(t, [\Gamma]) \mapsto (t,[f \cup \Gamma])$. For $t \in T \setminus \{f\}$, the set $j (p_{n-1}^{-1}(t)) $ is not contained in $Z_n$ and, moreover, $p_{n-1}^{-1}(t)=j (\mc C_{n-1}) \cap Z_n$. By induction, $\codim (p_{n-1}^{-1}(t), r_{n-1}^{-1}(t)) =2$ and hence $\codim (p_{n}^{-1}(t), r_{n}^{-1}(t))  \ge 2$ and we are done. As for the second statement, this follows from the fact that $Z_n$ is smooth.
(1) It is clear that the two hyperplanes,
\be \label{two hyperplane components}
\Delta_1:=\{e_1\}\times |(n-1)e_1+e_2|,\quad \text{ and } \quad \Delta_2:=\{e_1'\}\times |(n-1)e_1+e_2'|,
\ee
constitute two components of the discriminant locus, and also that they parametrizes curves of the form $e_1\cup \Gamma$, with $\Gamma$ a curve in $ |(n-1)e_1+e_2|$ (resp. $e_1'\cup \Gamma$, with $\Gamma \in |(n-1)e_1+e'_2|$).  Since the genus of $\Gamma$ is $\ge 2$, the general curve in these linear systems is smooth by Corollary \ref{smooth general member}.  
For the third irreducible component, consider the natural map
\[
\begin{aligned}
\psi : \P^1 \times \P^{n-2}=|2e_1| \times |(n-2)e_1 \times e_2| & \longrightarrow \Delta \subset \P^n \\
 (C_1, C_2) & \longmapsto C_1+C_2
\end{aligned}
\]
%is the composition of the Segre embedding $ \P^1 \times \P^{n-2} \to \P^{2n-3}$ with the surjection $ \P^{2n-3} \to \P^n$ induced by the natural map
%\[
%H^0(T, \mc O(2e_1)) \otimes H^0(T, \mc O((n-2) e_1+e_2)) \to H^0(T, \mc O(ne_1+e_2)).
%\]
%so it 
which is finite and birational. In particular, the image of $\psi$ defines a component of $\Delta$ which we denote by $\Delta_3$.  The general curve parametrized by this component is therefore the union of a smooth curve in $|2e_1| $ and of a smooth curve in $ |(n-2)e_1 \times e_2| $, which generically meet transversally in two distinct points.

%%
%%First of all, one can show with a classical Euler characteristic count shows that the degree of $\Delta$ is equal to $6(n+2)$.
%%It is clear that the two hyperplanes,
%%\be \label{two hyperplane components}
%%\Delta_1:=[e_1]\times |(n-1)e_1+e_2|,\quad \text{ and } \quad \Delta_2:=[e_1']\times |(n-1)e_1+e_2'|,
%%\ee
%%constitute two components of the discriminant locus, and also that they parametrizes curves of the form $e_1\cup \Gamma$ with $\Gamma \in |(n-1)e_1+e_2|$ (resp. $e_1'\cup \Gamma$ with $\Gamma \in |(n-1)e_1+e'_2|$).  

%%Secondly,  the natural map
%%\[
%%\phi : \P^1 \times \P^{n-2}=|2e_1| \times |(n-2)e_1 \times e_2| \to \Delta \subset \P^n.
%%\]
%%is just the composition of the Segre embedding $ \P^1 \times \P^{n-2} \to \P^{2n-3}$ with the surjection $ \P^{2n-3} \to \P^n$ induced by the natural map
%%\[
%%H^0(T, \mc O(2e_1)) \otimes H^0(T, \mc O((n-2) e_1+e_2)) \to H^0(T, \mc O(ne_1+e_2)).
%%\]
%%It follows that,
%%\be \label{normalization discriminant}
%%\P^1 \times \P^{n-2} \to \phi(\P^1 \times \P^{n-2}),
%%\ee
%%is finite and generically one to one, and that $\Delta_3:=\phi(\P^1 \times \P^{n-2})$ is a degree $n-1$ component of $\Delta$ whose general point parametrizes curves as in the statement of the theorem. Notice that the above map (\ref{normalization discriminant}) is just the normalization of $\Delta_3$.
We are left with proving that the remaining part $\Delta_4$ of the discriminant is irreducible and that the general curve parametrized by it is irreducible with one single node. By definition of hyperelliptic linear system, and the fact that $T$ contains no rational curves, the rational map $\phi_{|C|}$ associated to the linear system maps $T$ generically $2:1$ onto a degree $n-1$ smooth rational surface $R \subset \P^{n}$. We know recall some geometry of $\varphi_{|C|}$, following \cite[Thm 4.5.2]{Cossec-Dolgachev}.
We already saw that the linear system has two base points, which were described in Lemma \ref{base points}.
 Since the degree one linear systems  induced by restricting $|C|$ to $e_1$ and $e_1'$ both have a base point,  $\phi_{|C|}$ contracts these two curves to two distinct points, denoted by $P$ and  $Q$. By Lemma \ref{base points}, when $n$ is even then both base points of $|C|$ lie on $e_2$  so that when $n=2$  the degree two linear systems $|C| \res{e_2}$ is the trivial and the curve $e_2$ gets also contracted.
Let $T' \to T$ be the blow up of the two base points of $|C|$. We get a generically $2:1$ morphism $ T' \to R$ which contracts the proper transforms of $e_1 $ and $ e_1'$ (and also $e_2$ if $n=2$).

The ramification curve of $\phi_{|C|}$ is described in Theorem 4.5.2 of \cite{Cossec-Dolgachev}.
It consists of the union of two lines $\ell_1$ and $\ell_2$ (the images of the exceptional divisor of the blow up $T' \to T$) and of an irreducible curve $B \subset R$. The irreducible curve $B$ has two tacnodes in $P$ and $Q$ and is otherwise non singular ($T$ contains no rational curves), except in the case $n=2$ where it is has a simple node at the intersection $O$ of the two lines.

A curve in $|C|$ is singular in the following three cases.
If it covers a singular (hence reducible) hyperplane section of $R$,  if it covers a smooth curve that is tangent to the ramification curve, if its image contains one of the two points $P$ and $Q$ or, when $n=2$, if it covers a line passing by the intersection $O$ of $\ell_1$ and $\ell_2$.

The preimages, under $\phi_{|C|}$, of the hyperplane sections through $P$ and $Q$ are the curves in $\Delta_1$ and $\Delta_2$, respectively. 
The set of hyperplane sections of $R$ that are tangent to $\ell_1$ (resp. $\ell_2$) contain  $\ell_1$ (resp. $\ell_2$) and hence they form a set of codimension $2$.  For $n=2$, we also have to consider the set of hyperplanes through $O$, which is nothing but $\Delta_3$, namely, the sublinear system  $e_2+|2e_1|$  of curves containing $e_2$. While for $n \ge 3$ the component $\Delta_3$ corresponds to the curves covering the reducible hyperplane sections of $R$.

Finally, we observe that the closure of the set of hyperplane sections that are tangent to $B$ at smooth points is irreducible, since it is dominated by a $\P^{n-2}$--bundle over the smooth locus of $B$. Moreover, generically it parametrizes tangent curves that are tangent but not bi--tangent, so that the corresponding curve in $|C|$ has one simple node.

(2) A genus $2$ linear system has two simple base points $p$ and $q$, so that if $\mc C \to |C|$ denotes the universal family, we have $\chi_{top}(\mc C)=\chi_{top}(BL_{p,q}T)=14$. We use this to count the number of singular curves in $|C|$. By Proposition \ref{hyperelliptic enriques}, $|C|=|e_1+e_2|$ for two primitive elliptic curves $e_1$ and $e_2$, with $e_1 \cdot e_2=1$. It follows that that there are exactly two reducible curves in $|C|$, namely $e_1+e_2$ and $e'_1+e'_2$.  By \S 8.1.4 (i) of \cite{Cossec}, the linear system $|C|$ is the pullback under a degree two map $\psi: T \dashrightarrow |C| \times |C'|=\P^1 \times \P^1 \subset \P^3$ of one of the two rulings of the quadric. The map $\psi$, which is defined away from the four intersection points $e_i \cap e'_j$ ramifies over the union of a square of lines (the images of the four exceptional divisors of the blow up of $T$ at  $e_i \cap e'_j$ and of a degree $(4,4)$ curve $B$ that has a simple node at each edge of the square. Counting parameters we can see that the general Enriques surface can be constructed in this way, and that generically $B$ will have no other singularities. Moreover, we can also assume that  no line in the ruling is bitangent (for more details see \cite{thesis}). Since the singular, but irreducible, members of $|C|$ arise from lines that are tangent to $B$, we see that generically they all have one simple node and no other singularity. Hence all the singular curves in $|C|$ have Euler number equal to $-1$. If $N$ denotes the number of singular fibers, we have $14=-2(2-N)-N$,  and there are exactly $18$ singular fibers, two of which are reducible.
\end{proof}

\section{Smoothness and first properties of $N$} \label{fibration in jacobians}

%Recall that $f: S \to T$ denotes the covering morphism. We let $C\subset T$ denote an integral curve of genus $g \ge 2$, and we let $D$ be its inverse image in $S$. ?????

Let $\chi$ be a non-zero integer and set
\be \label{w e v}
w=(0,[C], \chi), \,\,\,\, \text{ and } \,\, v=f^*w=(0,[D], 2\chi).
\ee
Let $H$ denote a polarization on $T$, and set
\[
A=f^*H.
\]
The Hilbert scheme $\mc H_{c_1(w)}$ has two components: $|C|$ and $|C'|$. Without loss of generality we can consider only one of them and set
\be \label{N}
N=M_{w,H}(T, |C|), \,\,\,\, \text{ and } \,\, M=M_{v,A}(S).
\ee

%%%%We start with the following Proposition.
%%%%\begin{prop} \label{universal family}
%%%%Let $C$ be a curve of genus $g \ge 2$ on an Enriques surface $T$. Let $\chi$ be a non-zero integer so that the Mukai vector $w=(0,[C], \chi)$ is primitive, and let $A$ be an ample line bundle on $T$. Then the stable locus  $N^s$ parametrizes a universal family.
%%%%\end{prop}
%%%%\begin{proof}
%%%%Using Theorem 4.6.5 of \cite{H-L}, it is enough to show that there exists a line bundle $L$ on $T$ such that the greatest common divisor $(\chi(L\otimes G), \chi)$ of $\chi(L\otimes G)$ and $\chi$ is equal to one, i.e to find an $L$ such that $(L\cdot C, \chi)=1$. Since $w$ is primitive, we can write $C=mC_0$, where $C_0$ is a primitive curve and $m$ a positive integer with $(m, \chi)=1$. Moreover, since the Picard group of $T$ is unimodular, we can find an $L$ such that $L \cdot C_0=1$ and hence such that $(L\cdot C, \chi)=1$.
%%%%\end{proof}
%%%%
%%%%Notice that this marks a difference with respect to the moduli space of sheaves on $S$, since a universal family on the stable locus $M^s$ of $M$ might not exist.

In order to study $N$, we will look at the natural pullback morphism from $N$ to $M$, which to a sheaf $F$ on $T$ with $v(F)=w$, associates the sheaf $f^*F$ on $S$ with $v(f^* F)=v$. We start with a few well known lemmas

\begin{lemma}[\cite{Gieseker}] \label{pullback stable}
Let $G$ be a pure dimension one sheaf on $T$ and let $H$ be an ample line bundle on $T$ and set $A=f^*H$. If $G$ is $H$-semistable, then $f^*G$ is $A$-semistable on $Y$.
\end{lemma}
\begin{proof}
Since $\omega_T$ is numerically trivial, tensoring a sheaf by $\omega_T$ does not change the numerical invariants of the sheaf itself. It follows that the operation of tensoring by $\omega_T$ preserves not only the slope, but also stability and semi-stability with respect to any line bundle. Let $E \subset f^* G$ be a subsheaf. Using the projection formula for $f$, we see  that $f_* E$ is a subsheaf of the $H$-semistable sheaf $G\oplus (G\otimes \omega_T) $.
Clearly,  $\chi(E)=\chi (f_* E)$. Moreover, since $E$ is pure of dimension one, $f_* c_1(E)=c_1(f_* E)$.  Then $c_1(E)\cdot H=f_*c_1(E)\cdot H$, so that $\mu_H(E)=\mu_{H}(f_*E)$. Since $\mu_{H}(G)=\mu_{A}(G\oplus (G\otimes \omega_T) )$, the lemma is proved.
\end{proof}

\begin{lemma}  \label{F and F'}
Let $E$ and $G$ be two non isomorphic $H$-stable sheaves on $T$. Suppose that $f^*E \cong f^*G$. Then
$$G\cong E \otimes\omega_T.$$
\end{lemma}
\begin{proof}

If $f^*E \cong f^*G$, then we also have an isomorphism $E\otimes (\mc O_T \otimes \omega_T) = f_*f^*E \cong f_*f^*G=G\otimes (\mc O_T \otimes \omega_T) $. Since all maps from $E$ to $G$ are trivial, it follows that  the composition $E \to E\otimes f_* \mc O_S \to G\otimes \omega_T$ has to be non-zero. However, since $E$ and $G\otimes \omega_T$ are stable of the same reduced Hilbert polynomial, we have
$$E \cong G\otimes \omega_T.$$
\end{proof}

%It is then natural to ask wether stability, and not just semistability,  is preserved.

\begin{lemma} \cite{Takemoto73} \label{takemoto}
Let $G$ be a sheaf on $T$. If $G \cong G \otimes \omega_T$ then $f^* G$ is not simple. In particular, it cannot be stable.
\end{lemma}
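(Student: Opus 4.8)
The plan is to compute the endomorphism algebra $\Hom_S(f^*E, f^*E)$ directly and show that it has dimension at least two, which is exactly the failure of simplicity. The entire argument rests on the two standard properties of the finite \'etale degree two morphism $f\colon S \to T$: the projection formula and the adjunction between $f^*$ and $f_*$, both of which are available because $f$ is finite and flat.

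First I would record that, since $f$ is the \'etale double cover determined by the two-torsion line bundle $\omega_T$, one has $f_* {\mc O}_S \cong {\mc O}_T \oplus \omega_T^{-1} \cong {\mc O}_T \oplus \omega_T$, the last isomorphism coming from $\omega_T^{\otimes 2}\cong {\mc O}_T$. Applying the projection formula then gives
\be
f_* f^* E \cong E \otimes f_* {\mc O}_S \cong E \oplus (E\otimes \omega_T).
\ee
Next I would invoke the adjunction $\Hom_S(f^*A, G)\cong \Hom_T(A, f_*G)$ with $A=E$ and $G=f^*E$, which together with the decomposition above yields
\be
\Hom_S(f^*E, f^*E)\cong \Hom_T(E, f_* f^* E)\cong \Hom_T(E,E)\oplus \Hom_T(E, E\otimes\omega_T).
\ee
The hypothesis $E\cong E\otimes\omega_T$ now identifies the second summand with $\Hom_T(E,E)$, which is nonzero since it contains the identity; the first summand is likewise nonzero. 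Hence $\dim_\C \Hom_S(f^*E, f^*E)\geq 2$, so $f^*E$ carries a non-scalar endomorphism and is not simple.

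As for the main obstacle: once the projection formula and adjunction are in place the argument is essentially formal, so there is no genuine difficulty. The only point to keep honest is that the two direct summands really contribute linearly independent endomorphisms of $f^*E$, which is automatic because the splitting of $f_*f^*E$ is canonical. One could alternatively phrase the non-scalar endomorphism concretely: pulling back a chosen isomorphism $\psi\colon E\xrightarrow{\sim} E\otimes\omega_T$ and composing with the canonical comparison $f^*(E\otimes\omega_T)\cong f^*E$ (using $f^*\omega_T\cong {\mc O}_S$) produces an endomorphism of $f^*E$ that is visibly not a homothety, giving a second, direct proof that $f^*E$ fails to be simple.
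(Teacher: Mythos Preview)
Your argument is correct and is essentially the same as the paper's: both compute $\Hom_S(f^*E,f^*E)$ via adjunction and the projection formula as $\Hom_T(E,E)\oplus\Hom_T(E,E\otimes\omega_T)$, and then use the hypothesis to conclude the dimension is at least two. The only differences are that you spell out the intermediate step $f_*{\mc O}_S\cong{\mc O}_T\oplus\omega_T$ and append an alternative concrete description of a non-scalar endomorphism, neither of which changes the substance of the proof.
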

\begin{proof}
We have
\be
\begin{aligned}
\Hom(f^*G, f^*G)&=\Hom(G, f_* f^*G)=\\
&=\Hom(G, G)\oplus \Hom(G, G\otimes  \omega_T)\\
\end{aligned}
\ee
Since, by assumption, $\Hom(G, G\otimes  \omega_T)$ is at least one-dimensional the Lemma is proved.
\end{proof}

By Lemma \ref{pullback stable} the pullback map
\be \label{pullback morphism}
\begin{aligned}
\Phi: N=M_{w,H}(T, |C|)&\longrightarrow M=M_{v,A}(S),\\
[G] &\longmapsto [f^* G]\\
\end{aligned}
\ee
is a regular morphism.

\begin{lemma}[\cite{Kim98}]\label{pullback generically 2 to 1} 
The pullback morphism $\Phi: N \to M$ is generically $2:1$.
\end{lemma}
\begin{proof}
By Lemma \ref{F and F'} the morphism $\Phi$ is of degree $ \le 2$ so that we only need to prove that, for a sheaf $G$ corresponding to a general point in $N$, the sheaf $G$ is not isomorphic to $G \otimes \omega_T$. By Corollary \ref{smooth general member} the general member of $|C|$ is smooth. By Lemma \ref{existence stable sheaves}, if the Fitting support of $G$ is a smooth curve, then $f^* G$ is $H$-stable so that we may use Lemma \ref{takemoto} and conclude that $\Phi$ is generically $2:1$.
\end{proof}

As we remarked above, tensoring by $\omega_T$ preserves stability hence the involution
\be \label{involution on N}
\begin{aligned}
\epsilon: N &\rightarrow N,\\
G &\mapsto G \otimes \omega_T,
\end{aligned}
\ee
is well defined. It clearly commutes with $\Phi$.

%As for the smoothness properties of $N$, we start with
\begin{lemma} \label{N smooth at stable points}
Let $G$ be an $H$-semistable sheaf such $f^*G$ is $A$-stable. Then $G$ is $H$-stable and $N$ is smooth at $[G]$ of dimension $2g-1$.
\end{lemma}
\begin{proof}
The first statement is clear. The obstructions to deforming a $G$ on $T$ lie in $\Ext^2(G,G)$ which is dual, by Serre duality, to
\[
\Hom(G, G \otimes \omega_T),
\]
However, this space is zero by Lemma \ref{takemoto} as we are assuming $f^*G$ to be simple.
\end{proof}

Since $\iota^* f^* G=f^* G$ the image of $\Phi$ is contained in the closure of the fixed locus of the birational involution
\be \label{iota star}
\begin{aligned}
\iota^*: M=M_{v,A}(S) & \dasharrow M_{v,A}(S)=M.\\
F &\longmapsto \iota^* F
\end{aligned}
\ee
Notice that this involution is regular on an open subset containing sheaves with irreducible support.

\begin{lemma} \label{involution anti-symplectic}
The involution $\iota^*: M \dasharrow M$ is anti-symplectic, i.e. if $\sigma$ denotes the symplectic form on the smooth locus of $M$, then $\iota^* \sigma=-\sigma$. Moreover, the fibration $\pi: M \to |D|$ is equivariant with respect to the involution $\iota^*$ defined above.
\end{lemma}
\begin{proof} Let $F$ be an $A$-stable sheaf corresponding to a point $[F]$ in $M$.
By functoriality of the cup product and of the trace map, the following diagram is commutative,
\be
\xymatrix{
\Ext^1(F, F) \times \Ext^1(F, F) \ar[d]_{\iota^*} \ar[r] ^{\phantom{ghim}\cup} & \Ext^2(F, F) \ar[r]^\tr \ar[d]_{\iota^*} &  H^2(S, {\mc O}_S)\ar[d]^{\iota^*}\\
\Ext^1(\iota^*F, \iota^*F) \times \Ext^1(\iota^*F, \iota^* F) \ar[r] ^{\phantom{ffjgjgjf} \cup} &\Ext^2(\iota^*F,\iota^* F) \ar[r]^\tr &  H^2(S, {\mc O}_S).
}
\ee
Hence by Mukai's description of $\sigma$ (cf. (\ref{symplectic form})),  to prove the Lemma we only need to prove that the identification $H^2(S, {\mc O}_S)\cong \C$ changes sign if we compose it with $\iota^*$. This follows from the fact that, since $\iota$ is an anti-symplectic involution on $S$, the identification $H^0(S, \omega_S)=\C \sigma \cong \C$ changes sign once we compose it with $\iota^*$. The second statement follows from the definitions of $\iota^*$ and $\pi$.
\end{proof}

\begin{lemma} \label{fix iota smooth}
If $H$ is $\iota^*$-invariant, then the involution (\ref{iota star}) is regular.
Let $Z $ be any component of $\Fix(\iota^*) \subset M$, then $Z \cap(M\setminus \Sing(M))$ is smooth. Moreover, if $Z\cap(M\setminus \Sing(M))$ is non empty, then $Z$ is an isotropic subvariety of $M$.
\end{lemma}
\begin{proof}
The first statement is clear, since if $F$ is $H$-stable then $\iota^*F$ is $\iota^*H$-stable. The second statement follows from the well known fact that the fixed locus of the action of a finite group on a smooth variety is smooth. As for the third statement, it is an immediate consequence of Lemma \ref{involution anti-symplectic}.
\end{proof}

\begin{lemma} \cite{Kim98} \label{simple and invariant}
Let $F$ be a pure sheaf of dimension one on $X$ and assume that it is $\iota^*$-invariant. If $F$ is simple, then
\[
F=f^*(G),
\]
for some sheaf $G$ on $Y$.
\end{lemma}
\begin{proof}
Since $Y$ is a quotient of $X$ by a $\Z/(2)$ action, the descent data translates into the existence of a morphism $\varphi: \iota^* F \to F$, such that the following diagram is commutative
\[
\xymatrix{
\iota^* \iota^* F \ar@{=}[d] \ar[r]^{\iota^*\varphi } & \iota^* F \ar[r]^\varphi & F \\
F \ar[urr]_\id &&
}
\]
Since $F$ is simple, this can always be achieved by multiplying any given isomorphism $\iota^* F \to F$ by a suitable scalar.
\end{proof}

Now let 
\be \label{Y}
Y:=Y_{v,A} \subset \Fix(\iota^*),
\ee
be component of $\Fix(\iota^*)$ containing $\Fix($. The lemma above says that the restriction
\[
\Phi: N \to Y \subset M,
\]
(which by abuse of notation we still denote by $\Phi$) is surjective.

Before stating the main result of this section, we recall the following definition.

\begin{defin} \cite[Def. 3.8]{Yoshioka-FM} \cite[Theorem-Definition 2.4]{AS} \label{definition of walls}
Let $v$ be a primitive Mukai vector. A polarization $H$ is called $ v$--generic if any $H$--semistable sheaf with Mukai vector $v$ is actually $H$--stable.
\end{defin}

By \S 1.4 of \cite{Yoshioka},  if $v=(0, D, \chi)$ is primitive Mukai vector and $\chi \neq 0$, then the locus of $v$--generic polarizations is non--empty. More precisely, this locus is equal to the complement of a finite union of real codimension one subset of $\Amp_\R(T)$. A \emph{wall of $v$} is defined to be an irreducible component of the complement of the locus of $v$--generic polarizations. In \cite[Prop 2.5]{AS} explicit equations for the walls are given for primitive Mukai vectors of pure dimension one (notice that the set of walls could, a priori, be a proper subset of the linear subspaces appearing in \S 1.4 of \cite{Yoshioka}).  It is not hard to see that if $\chi=0$, then the set of $v$--generic polarizations can be empty.

\begin{thm} \label{thm smooth}
Let $C$ be a curve of genus $g \ge 2$ on an Enriques surface $T$. Let $\chi$ be a non-zero integer, set $w=(0,[C], \chi)$ and $v=(0,[D], 2\chi)$, where $D=f^{-1}(C)$. Assume that $v$ is primitive, and let $A$ be an ample line bundle on $T$ such that $A=f^*H$ is $v$-generic. The moduli space
\[
N=M_{w,H}(T, |C|),
\]
is a smooth, irreducible, projective variety of dimension $2g-1$, and it admits an \'etale double cover onto the Lagrangian subvariety
$ Y_{v,A}(S) \subset M_{v,A}(S)$.
\end{thm}
%
%
%\begin{prop}
%Let $w$ be such that $v=f^* w$ is primitive, and let $A$ be such that $H=f^* A$ is $v$-generic. Then $Y$ and $N$ are smooth, and
%\[
%\Phi: N \to Y,
%\]
%is \'etale.
%\end{prop}
\begin{proof}
The smoothness follows from Lemmas \ref{N smooth at stable points}, \ref{fix iota smooth} and \ref{simple and invariant} above.
%As for the dimension of $N$, it can be deduced from the fact that, under our assumptions,
%\[
%\dim \Ext^1(G,G)+1=-\chi(G,G)=2g.
%\]
The fact that the pullback morphism is unramified follows from Lemma \ref{F and F'}.
Notice that since $\dim M_{v,A}(S)=2h$ and $h=2g-1$, the isotropic subvariety $Y_{v,A}(S) \subset M_{v,A}(S)$ is indeed Lagrangian. The fact that $N$ is irreducible follows from \cite[Thm 0.2]{yoshioka-enriques}.

\end{proof}

\begin{rem}
In the rest of the paper we will usually refer to $N$ without mention of the dependency on $w$ and $H$. The phrase `` let $N$ be as in Theorem \ref{thm smooth}'' will mean ``let $w$ and $A$ be as in Theorem \ref{thm smooth} and set $N= M_{w,A}(T, |C|)$''. If we refer to $N$ as a relative compactified Jacobian of specific degree $d$, then it means that we have chosen $\chi=d-g+1$ in $w=(0, C, \chi)$.
\end{rem}

\begin{rem}
In the theorem above we have asked $\chi \neq 0$. This is because otherwise the canonical bundle of a reducible curve would be strictly semistable. This condition appears also in \cite{Yoshioka}.
\end{rem}

One can also verify directly that if $G \ncong G \otimes \omega_T$, then the differential
\[
d\Phi: \Ext^1_T(G,G) \to \Ext^1_S(f^*G,f^*G),
\]
is injective.

%\begin{cor} IRREDUCIBILITY ISSUE: DON'T NEED IF ONLY TWO COMPONENTS.\\
%Suppose $w$ is such that $v=f^* w$ is primitive and that $A$ is such that $H=f^* A$ is $v$-generic. Then \emph{any} component of $M_{w,A}(T)$ is smooth of the expected dimension, and any component of $\Fix(\iota^*)$ is Lagrangian.
%\end{cor}
%\begin{proof}
%Indeed, under these assumptions, the strictly semistable locus of $M_{w,A}(T)$ and of $M_{v,H}(S)$ are empty, and hence every component of $M_{w,A}(T)$ is smooth of the expected dimension. Moreover, by Lemma \ref{simple and invariant}, we known that for any component $Z \subset \Fix(\iota^*)$ there is a component of $M_{w,A}(T)$ surjecting onto $Z$ and hence the second statement follows.
%\end{proof}

Notice that if $w$ is primitive, then so is $v$ as soon as $C$ is not divisible by $2$ in $\NS(T)$. Moreover, if $T$ is general, then the general $H$ in $\Amp(T)$ will be such that $f^*H$ is $v$-generic.

\begin{rem}
If the assumptions of the proposition are not satisfied, the singular locus of $M$ and of $N$ may be non-empty. For vector bundles, this singular locus has been described by Kim in \cite{Kim98}.
\end{rem}

\subsection{On the support morphism}

Regarding the relative compactified Jacobian over the locus of reduced curves, we have the following result of Melo, Rapagnetta and Viviani

\begin{prop}[\cite{Melo-Rapagnetta-Viviani14}] \label{flat on reduced locus}
The restriction 
\[
N_{V} \to V,
\]
of $\nu$ to the open locus $V$ of reduced curves is equidimensional. In particular, if $N$ is smooth then $N_V \to V$ is flat. 
\end{prop}
\begin{proof}
This follows from the cited result Proposition \ref{MRV on compactified Jac}.
\end{proof}

Problems, however, may arise when dealing with non-reduced curves. In general, the Simpson moduli spaces of sheaves on a non-reduced curve may have higher dimensional components, as the following example shows, and are not well understood.

\begin{example}\label{jesse}{\rm Consider a smooth curve $\Gamma'$ of genus $\gamma' \ge 2$, and let $\Gamma$ denote the scheme obtained by considering a non-reduced double structure on $\Gamma'$. Let $\gamma$ be the genus\footnote{By genus we mean the \emph{arithmetic} genus of $\Gamma$, i.e., the integer $\gamma$ defined by $\chi(O_\Gamma)=1-\gamma$.} of $\Gamma$. It was shown by Chen and Kass in \cite{Chen_Kass11}, that all the components of the Simpson moduli space have dimension $\gamma$ except, possibly, a $(4\gamma'-3)$-dimensional component, which exists when $4\gamma'-3 \ge \gamma$. This component parametrizes rank 2 semistable sheaves on $\Gamma'$.
Suppose now that $\Gamma$ and $\Gamma'$ are contained in a smooth surface $X$, so that the scheme structure defining $\Gamma$ is the one induced by the ideal sheaf ${\mc O}(-2\Gamma')$. By the adjunction formula,
\[
\gamma=4\gamma'-3-\deg {\omega_X}_{|\Gamma'}.
\]
so the dimension of the Simpson moduli space does not jump when
\be \label{canonical bundle numerically trivial}
\deg {\omega_X}_{|\Gamma'}\le 0.
\ee
}
\end{example}

In particular, as soon as the canonical bundle of $X$ is numerically trivial ( K3, abelian,  Enriques or bi-elliptic surfaces) then (\ref{canonical bundle numerically trivial}) is satisfied for any curve contained in $X$.

\begin{conjecture} \label{expectation}
Let $(X,H)$ be a smooth projective surface and let $C \subset X$ be a curve of arithmetic genus $g$. If $\deg {\omega_X}_{|\Gamma}\le 0$ for every sub curve $\Gamma \subset C$, then any component of the Simpson moduli space of pure dimension one sheaves with support equal to $C$ is $g$-dimensional.
\end{conjecture}

Evidence for this conjecture is given by the following examples.

Suppose $(X, H)$ is a polarized K3 or abelian surface and let $v=(0, D, \chi)$ be a primitive Mukai vector with $\chi \neq 0$. Matsushita proved in \cite{Matsushita1} that the support morphism
\[
\pi: M_{v,H} \to |D|,
\]
is equidimensional. The proof, however, relies on the existence of a symplectic structure on these moduli spaces and cannot be applied to moduli spaces of sheaves on other surfaces. Indeed, using Koll\'ar's theorem on the torsion freeness of the higher direct images of the structure sheaf, Matsushita proves that \emph{ every } fiber of $\pi$, and not just the general one, is Lagrangian and hence of dimension equal to $\dim M_{v,H}(X)/2$. Here, by Lagrangian, we mean that the pullback of the symplectic form to any resolution of a fiber, considered with its reduced induced structure, vanishes identically.

Another example where the conjecture holds true is the Hitchin system for the group $GL(r)$, which can be thought of as the relative compactified Jacobian of a linear system on the ruled surface $X$ associated to the canonical bundle of a curve. The spectral curves are multi sections of the ruling and hence satisfy $\omega_S\cdot C =0$. Also in this case, the proof of flatness comes from the existence of a symplectic form with respect to which the Jacobian fibration is Lagrangian \cite{Laumon}. The condition $\deg {\omega_X}_{|\Gamma}\le 0$ appears also in the recent paper \cite{CL}, which provides further evidence for the conjecture.

However, it is natural to expect that the dimension of the fibers of the support map should not depend on the existence of a symplectic structure but only on discrete invariants such as the rank of the sheaves and the arithmetic genera of their supports.

The last example is provided by Del Pezzo surfaces. In \cite{LePotier} Le Potier shows that for $\P^2$ the Picard group of the moduli spaces has two generators: the pullback of the hyperplane section under the support morphism and the determinant line bundle. Looking into the proof, however, one realizes that for any generically polarized Fano surface the fibers of the support morphism are not too big. More specifically, one can use Lemmas 3.2 and 3.3 of loc. cit., and the fact that in this setting one can choose the Quot scheme so that it is a principal bundle over the moduli space, to show that the locus of sheaves supported on non-reduced curves has codimension greater or equal to two.

Since we were not yet able to prove Conjecture \ref{expectation} for Enriques surfaces, we will need the following assumption when computing the fundamental group and the second Betti number in Sections \ref{fundamental group} and \ref{second betti number}.

\begin{assumption}\label{assumption}
As above, let $N_V \to V$ be the restriction of $\nu$ to the locus $V \subset |C|$ of reduced curves. The linear system $|C|$ is such that 
\[
\codim(N \setminus N_V, N) \ge 2.
\]
\end{assumption}

Since $\codim (V, |C|)\ge 2$ this assumption is equivalent to asking that there are no irreducible components of $N_\Delta$ which map to codimension $\ge 2$ subsets of $\Delta$. 
%\footnote{I would like to thank J. Koll\'ar for pointing out to me that it is sufficient to ask for this last assumption and not for flatness.}

In some cases of low genus, where the curves of the linear system do not degenerate too much, one can show directly that the relative compactified Jacobian is equidimensional.  For example, if there are no non-reduced curves, or if all the non-reduced curves have at worst a double structure one can use Propostion \ref{flat on reduced locus} and Example \ref{jesse}. Some examples of linear systems all of whose members are reduced are
\be \label{low genus linear systems}
\begin{aligned}
&\vert e_1+e_2 \vert, \,\,\, \text{with}  \,\,\, e_1\cdot e_2=1,& g(C)= 2, & \,\, \dim N=3, \\
&|e+f|  \,\,\, \text{with}  \,\,\, e\cdot f=2,&g(C)=3, & \,\, \dim N=5,\\
&\vert e_1+e_2+e_3 \vert  \,\,\, \text{with}  \,\,\, e_i\cdot e_j=1 \,\, \text{for}\,\, i\neq j,& g(C)=4, & \,\, \dim N=7.
\end{aligned}
\ee
where $e_1$, $e_2$, $e_3$, $e$ and $f$ are primitive elliptic curves. 

More generally, as Yoshioka has pointed out to me, this assumption is satisfied whenever $|C|$ is primitive:

\begin{prop} \cite{yoshioka-enriques} \label{Yoshioka assumption} Let $|C|$ be a primitive linear system on a general Enriques surface $T$, and $N$ be as in Theorem \ref{thm smooth} (i.e. let $w$ and $H$ be as in Theorem \ref{thm smooth} and set $N= M_{w,H}(T, |C|)$ ). Then Assumption \ref{assumption} is satisfied.
\end{prop}
\begin{proof} This is Proposition 4.4 of \cite{yoshioka-enriques}.
\end{proof}

\begin{cor} \label{assumption hyperelliptic}
Let $|C|$ be a hyperelliptic linear system on a general Enriques surface $T$, and let $N$ be as in Theorem \ref{thm smooth}. Then Assumption \ref{assumption} is satisfied.
\end{cor}
\begin{proof}
By $(3)$ Proposition \ref{hyperelliptic enriques}, a hyperelliptic linear system is primitive and hence we may use Proposition \ref{Yoshioka assumption}
\end{proof}

\section{The fundamental group} \label{fundamental group}

This section is devoted to computing  the fundamental group of the relative compactified Jacobian variety $N$ constructed in Section \ref{fibration in jacobians}. We show that there is a surjection $\Z/(2) \twoheadrightarrow \pi_1(N)$ which, under Assumption \ref{assumption} is actually an isomorphism. Under this assumption, we can also identify the universal covering space, which can be described using the norm map. At the end of the section, we also prove some results on the vanishing cycles of these families. These results will be used to calculate the second Betti numbers of $N$.

The main result of the section is the following

\begin{thm} \label{fundamental group thm} Let $|C|$ be a genus $g \ge 2$ linear system on a general Enriques surface $T$ let $v$  and $N$ be  as in Theorem \ref{thm smooth}.
Then there is a surjection
\[
\Z/(2) \twoheadrightarrow \pi_1(N)
\]
which is an isomorphism in case 
$|C|$ satisfies Assumption \ref{assumption}.%Let $Y$ and $|C|$ be as above, and suppose that $|C|$ satisfies Assumption \ref{assumption}. 
%Then,
%\[
%\pi_1(N)\cong \Z/2\Z.
%\]
\end{thm}

\begin{rem}
Recall that a hyperelliptic linear system on a general Enriques surface $T$ is always primitive (Prop. \ref{hyperelliptic enriques}). Hence, the theorem above holds unconditionally if $|C|$ is hyperelliptic.
\end{rem}

%%\begin{cor}
%%Let $|C|$ be a primitive genus $g \ge 2$ linear system on a general Enriques surface $T$ let $v$  and $N$ be  as in Theorem \ref{thm smooth}. Then $\pi_1(N) \cong \Z \slash (2)$.
%%\end{cor}

There are two main ingredients in the proof of this result. The first is a theorem of Leibman \cite{Leibman}, as used also in \cite{Mark_Tik} and in \cite{ASF12}.  We combine this with the second ingredient, which is the Abel--Jacobi map and which allows the comparison of the fundamental group of a family of mildly singular curves with the fundamental group of the corresponding relative compactified Jacobian. We followed an idea of the referee to use the Abel--Jacobi map, as it seemed to be very natural. I am grateful to the referee for this suggestion.
At the end of the section we also prove a result on the vanishing cycles of these families (correcting a mistake that appeared in the first version and that was pointed out to us by the referee). This will be used in Section \ref{second betti number}.

\subsection{Preliminaries}

By abuse of notation, let us denote by
\[
\pi: Y \to |C|,
\]
the map induced by the support morphism $M \to |D|$, and by
\[
\nu: N \to |C|,
\]
the support morphism for the moduli space of sheaves on $T$.
There is a commutative diagram,
\be \label{fibration N and Y}
\xymatrix{
N  \ar[dr]_\nu \ar[rr]^\Phi && Y \ar[dl]^\pi\\
& |C|
}
\ee
which shows that the double cover $\Phi$ restricts fiberwise to a non-trivial double cover.
For later use, let us define a torsion line bundle on $Y$ by setting
\be \label{define L}
\Phi_*{\mc O}_N\cong {\mc O}_Y\oplus L.
\ee
Notice that $L^{\otimes 2}\cong{\mc O}_Y$, and that $L$ generates the kernel of $\Phi^*: \Pic(Y) \to \Pic(N)$.

Let
\[
U \subset |C|, \quad \text{ and  }\,\ \,\,  V \subset |C|,
\]
be the open loci of smooth and reduced curves respectively. For any $t \in U$, let $C_t$ be the smooth member of $|C|$ corresponding to $t$, and set
\[
D_t=f^{-1}(C_t).
\]
By \cite{MumfordPryms}, (vi) Section 2 and Corollary 2 Section 3, the fixed locus $\Fix(\iota)$ of $\iota^*$ acting on  $\Jac(D_t)$ is exactly $f^*(\Jac(C_t))$ and the double cover $N_t \to Y_t$ is induced by the sequence
\be \label{pullback jacobians}
1 \to \Z/(2) \to \Jac(C_t) \stackrel{f^*}{\rightarrow} \Fix(\iota)\subset \Jac(D_t).
\ee

\subsection{Leibman}

Let us start by considering the setting of \cite{Leibman}, which we formulate directly in the context of algebraic varieties. Let $p\colon E \to B$ be a surjective morphism of smooth connected varieties. Assume that $p$ has a section $s$. Let $W \subset B$ be a locally closed smooth subvariety of codimension at least one. Set $U = B \setminus W$, $E_U = p^{-1}(U)$, and $E_W=p^{-1}(W)$. Assume that $E_U \to U$ is a smooth fibration that is topologically locally trivial with path connected fiber $F \stackrel{j}{\hookrightarrow} E_U$. We will say that a morphism $E \to B$ satisfies Leibman's condition if it satisfies the assumptions just mentioned. 
Fix base points $o \in E_U$ and $p(o) \in U$ with respect to which we consider fundamental groups. We have the following commutative diagram
\be \label{leibman diagram}
\xymatrix{
1 \ar[r] & R  \ar[d] \ar[r] & G \ar[r] \ar[d] & H \ar[d]  & \\
1 \ar[r] & \pi_1(F) \ar[r]^{j_*} \ar[d] & \pi_1(E_U) \ar[r]^{p_*} \ar[d] & \pi_1(U) \ar[r] \ar[d] & 1\\
1 \ar[r] & K \ar[r] & \pi_1(E) \ar[r]^{p_*} & \pi_1(B) &\\
}
\ee
where $G=\ker[ \pi_1(E_U) \to \pi_1(E)]$, $H=\ker[\pi_1(U) \to \pi_1(B)]$, $K=\ker[\pi_1(E) \to \pi_1(B)]$, and $R=\ker [\pi_1(F) \to K]$. Since removing closed algebraic subsets only makes the fundamental group larger,  the two vertical arrows on the bottom left are surjective and hence so is $\pi_1(E) \to \pi_1(B)$. 

Following Leibman, let us select a set of generators of $H$ which we will then lift to $G$. A loop in $U$ that can be closed in $B$ can be represented as the image of the boundary of a map $D \to B$ from a two dimensional disk $D$. Choose a general point $x_i$ on every irreducible component $W_i \subset W$ and a small two dimensional disk $D_i \subset B$ transversal to $W_i$ in $x_i$ and such that $D_i \cap W_i=\{x_i\}$.  By transversality, any map from a two dimensional disc to $B$ with boundary contained in $U$ can be moved, up to homotopy, to a map whose image is a disc that is transversal to every component of $W$. Moreover, it can be arranged so that this disc meets every component at the chosen points (cf. \cite[(1.11)]{Leibman}). For every $i$, join the base point $p(o)$ with $\partial D_i$ in every possible way (up to homotopy) via paths $\gamma$ in $U$. The set of paths of the form $\gamma \partial D_i \gamma^{-1}$, together with their inverses, gives a set of generators for $H$. Since $E \to B$ has a section, the set of loops in $E_U$ of the form $s_*(\gamma \partial D_i \gamma^{-1})$ are a lift to $G$ of the generators of $H$. Hence the morphism $G \to H$ is surjective. As a consequence, $\pi_1(F) \to K$ is also surjective and there is an exact sequence
\be \label{R for general case}
1 \to R=\pi_1(F) \cap G \to \pi_1(F) \to \pi_1(E) \to \pi_1(B) \to 1.
\ee
Our aim is the describe the group $R$ more explicitly when $E \to B$ is a family of curves or its relative compactified Jacobian. Before doing so let us point out two important facts.

\begin{rem} \label{remark on birational class}
The first remark is that so far we have only used that the section is defined at the general point of each component of $W$. The second is if $E'$ is a smooth variety and $h: E \dashrightarrow E'$ is a birational map then $\pi_1(E) \cong \pi_1(E')$. In particular, if $h$ restricts to an isomorphism over the general fiber of $E \to B$ we  are free to consider $\pi_1(E')$ instead of $ \pi_1(E)$ in the exact sequence above.
\end{rem}

As usual, let $|C|$ be a linear system on a general Enriques surface $T$ of genus $g \ge 2$. Let $B \subset |C|$ be a general linear subsystem with the property that the universal family
\[
\mc C_B \subset B \times T, \quad p: \mc C_B \to B,  
\]
of curves is smooth and has a section $s$. This is the case, for example,  if $|C|$ is a hyperelliptic linear system (Corollary \ref{section}), or if $B \subset |C|$ is a general pencil in an arbitrary linear system of genus $g \ge 3$ (a general pencil in a genus $g$ linear system has $2g-2$ simple base points). Then $\mc C_B \to B$ satisfies the assumptions of Leibman and we can consider the corresponding sequence (\ref{R for general case}). For the base point $o \in \mc C$, let $C_{t_o}$ be the fiber of $p$ over  $t_o:=p(o)$.  We have the following

\begin{lemma} \label{lemma R for C}
%Consider the sequence (\ref{R for general case}) for $\mc C \to B$ as above
%\[
%1 \to R_{\mc C} \to \pi_1(C_{t_o}) \to \pi_1(\mc C) \to 1.
%\]
Let $D_{t_o}=f^{-1}(C_{t_o})$ be inverse image of $C_{t_o}$ under the universal cover $f: S \to T$. There is an exact sequence
\be \label{R for C}
1 \to R_{\mc C} \to \pi_1(C_{t_o}) \to \pi_1(\mc C_B) \to 1.
\ee
where
\[
R_{\mc C} =f_*( \pi_1(D_{t_o})),
\]
\end{lemma}
\begin{proof}
This is just (\ref{R for general case}) applied to $\mc C_B \to B$ together with the fact that $\pi_1(B)$ is trivial.
It is easy to see that the second projection $\mc C_B \to T$ induces an isomorphism at the level of fundamental groups so
\[
\pi_1(\mc C_B ) \cong \Z \slash (2).
\]
As a consequence, the two morphisms $\pi_1(C_{t_o}) \to \pi_1(\mc C_B)$ and $\pi_1(C_{t_o}) \to \pi_1(T)$ have the same kernel. Since the $2:1$ cover $S \to T$ restricts to the non--trivial $2:1$ cover $D_{t_o} \to C_{t_o}$, it is clear that $\ker[\pi_1(C_{t_o}) \to \pi_1(T)]=f_*(\pi_1(D_{t_o}))$ and the Lemma follows.
\end{proof}

Now let $H$ be a polarization on $T$ and let
\[
\nu: \ov J_{H,0}(\mc C_B) \to B
\]
be the degree zero relative compactified Jacobian of $\mc C_B \to B$. In other words, $ \ov J_{H,0}(\mc C_B)=N \times_{|C|} B$, where $N$ is the moduli space for the Mukai vector $w=(0, C, -g+1)$. Let us assume that $H$ is  $v$--generic  (so that $N$ and $\ov J$ are smooth) and also such that $\nu$ has a section. Such a polarization exists because of Remark \ref{section defined}.

%%
%%A few remarks are in order.
%%The first is that
%%
%% The first is that since both in the case of hyperelliptic linear system and in the case of a general pencil of an arbitrary linear system we know from Proposition \ref{} that $\nu$ is flat. I
%%
%%Notice that since we are considering the degree zero Jacobian, $\nu$ has a rational section  and by Remark \ref{section defined}, Lemma \ref{existence stable sheaves} and Remark \ref{remark on birational class} we can assume that $H$ is such that the section is a regular morphism. 

We are thus in the setting of Leibman. Sequence (\ref{R for general case}) becomes
\be \label{R for J}
0 \to R_{  \ov J_{H,0}(\mc C_B)} \to \pi_1(J_{t_o})=H_1(C_{t_o}, \Z) \to  \pi_1( \ov J_{H,0}(\mc C_B)) \to 0.
\ee

\begin{lemma} \label{independent of H and d}
Let $\mc C_B \to B$ be as above. For any polarization $H$ and any degree $d$ such that $\ov J_{H,d}(\mc C_B)$ is smooth, there is a short exact sequence (\ref{R for J}) with first term $R_{  \ov J_{H,0}(\mc C_B)} $ independent of $H$ and $d$.
\end{lemma}
\begin{proof}
Using Remark \ref{remark on birational class} we only need to check that the birational class of $\ov J_{H,0}(\mc C_B)$ is independent of $H$ and $d$. Independence of $H$ follows from Lemma \ref{existence stable sheaves} and  the independence of $d$ follows from the existence of a section.

%%%, since to defined the J
%%%The fact that the birational class of $J_H^d$ is independent of $H$ follows from Re
%%%
%%%
%%%of the degree $d$ relative compactified Jacobian $J_H^d$ is independent of $H$ and of $d$. The independence of $H$ follows from the fact that the general curve in $|C|$ is irreducible (and holds for any linear system), while the independence from $d$ follows from the fact the existence of a section. It follows that it will be sufficient to that proving it for a single $H$ and for $d=0$ which prove that $\pi_1(J_H^d) \cong \mathbb Z \slash (2)$, for every $d$ and $H$ such that $J_H^d$ is smooth. Second, let us observe that since $\mc C \to |C|$ and $J_H^0 \to |C|$ are flat we can safely remove the preimages of codimension $2$ closed subsets from $|C|$. 
\end{proof}

As a consequence to compute $\pi_1(  \ov J_{H,d}(\mc C_B))$ we can drop $H$ from the notation and  only consider the degree $0$ compactified Jacobian. This will be denoted by
\[
\ov J_B:= \ov J_{H,0}(\mc C_B)
\]

Our aim is to use Lemma \ref{lemma R for C} to compute $R_{\ov J_B}=R_{\ov J_{H,0}(\mc C_B)}$ and we will use Abel--Jacobi maps to compare (\ref{R for C}) and (\ref{R for J}).

 Let $U_B=U \cap B \subset B$ be the open  locus parametrizing smooth curves and consider the restriction $J_{U_B}=\nu^{-1}(U_B)$ of $\ov J_B \to B$ to $U_B$.
  Using the section $s : B \to \mc C_B$ we can define an Abel--Jacobi map
\be \label{Abel Jacobi}
\begin{aligned}
A=A_{U_B,s}: \mc C_{U_B} &\longrightarrow J_{U_B},\\
c & \longmapsto m_{C_b, c} \otimes \mc O_{C_b}(s(b)),
\end{aligned}
\ee
which is well known to be an embedding. We can view $A$ as a rational map
\[
\mc C_B \dashrightarrow \ov J_B
\]
which induces, since we are assuming $\ov J$ to be smooth (so the fundamental group is a birational invariant; $A$ is defined on an open set whose codimension is at least $2$) a morphism $A_*: \pi_1(\mc C_B ) \to \pi_1(\ov J)$ which fits into the following commutative diagram
\be \label{Leibman seq for C and J}
\xymatrix{
 1 \ar[r] & R_{\ov J_B} \ar[r]& \pi_1(J_{t_o}) \ar@{->>}[r] &  \pi_1(\ov J_B) \ar[r]& 1\\
 1 \ar[r] & R_{\mc C_B} \ar[r] \ar[u]^{r} & \pi_1(C_{t_o}) \ar@{->>}[u]^{t}\ar@{->>}[r]&  \pi_1(\mc C_B)\ar[u]^{A_*} \ar[r]&1
 }
\ee

\begin{lemma} \label{if r surj then okay}
There exist a surjection $\pi_1(\mc C_B)=\Z \slash (2)  \twoheadrightarrow  \pi_1(\ov J_B)$ which is an isomorphism if and only if $R_{\mc C_B} \to R_{\ov J} $ is surjective. If this is the case, then $R_{\ov J_B}=f_* H_1(D_{t_o}, \Z)$.
\end{lemma}
\begin{proof}
The first two statements are diagram chasing, while the third  follows from the fact that since $R_{\mc C_B}=f_*( \pi_1(D_{t_o}))$ then $r(R_{\mc C_B})=f_* H_1(D_{t_o}, \Z) \subset R_{\ov J_B}$ and this inclusion is an equality if and only if $r$ is surjective.
\end{proof}

To show that the map $\pi_1(\mc C_B) \to \pi_1(\ov J)$ is actually an isomorphism, we will need the following Lemma which we will use after showing that the Abel--Jacobi maps embeds $\mc C_B$ in $\ov J_B$. Before stating the Lemma, let us introduce some more notation to add to the one defined at the beginning of the Section.

 Again, we follow Leibman ($(\gamma')$ on pg 102 and pg 104). For this part we also need that $p: E \to B$ is smooth at the general point of $E_W$ (in the case of $N \to |C|$ this follows from Proposition \ref{MRV on compactified Jac}).

For every component $E^j_{D_i}$ of $p^{-1}(W_i)$ we can choose a general point $q_{ij}$, lifting $x_i$, and a small disk $D_{ij}$, lifting $D_i$,  which is transversal to $E^j_{D_i}$ and only meets it in $q_{ij}$. This is possible because $p$ admits local sections at the general point of every component of $E_W$. For every $i$, we can choose these lifts so that the one corresponding to the component meeting the section $s(B)$ is precisely $s(D_i)$. Moreover, for any path $\eta$ joining $p(o)$ to $\partial D_i$ as above we can choose (since the fibers over $U$ are path connected) a path $\gamma$  in $E_U$ which lifts $\eta$ and which joins the base point in $E_U$ to a fixed point $o_{ij} \in \partial D_{ij}$. Notice that we can chose such points  $o_{ij}$ so that for fixed $i$ they lie over the same point $o_i \in \partial D_i$ and so that the point lying in $s(D_i)$ is precisely $s(o_i)$.

This defines other lifts of the generators of $H$, which are not necessarily contained in the image of the section. The observation, which we will make more precise in Subsection \ref{section vanishing cycles}, is that different lifts of the same generator of $H$ differ by a vanishing cycle of the family. Since for the moment we don't need this, we postpone the discussion on the vanishing cycles in a separate subsection.

\begin{lemma} \label{comparing R}
Let us be given two morphisms $p: E \to B$ and $p':E' \to B'$ satisfying the conditions of Leibman and suppose that there are locally closed embeddings $E' \subset E $ and $B' \subset B$ commuting with $p$ and $p'$.  Let $U' \subset B'$ be the locus where $p'$ is smooth and suppose that $U' \subset U$. Denote by $F'$ the fiber of the topologically locally trivial fibration $E'_{U'}$ and suppose that the inclusion $F' \subset F$ induces a surjection at the level of fundamental group. Suppose furthermore that every component of $E_{W}$ contains a component of $E'_{W'}={p'}^{-1}(W')$, where $W'=B' \setminus U'$ and that both $p$ and $p'$ are smooth at the general point of every component of $E'_{W'}$. Let $R=\ker[\pi_1(F) \to \pi_1(E)]$ and $R'=\ker[\pi_1(F') \to \pi_1(E')]$ be as in ( \ref{leibman diagram}). Then the natural morphism $R' \to R$ is surjective.
\end{lemma}
\begin{proof}
We know that $R=G \cap \pi_1(F)$, where $G=\ker[\pi_1(E_U) \to \pi_1(E) ]$ and similarly $R'=G' \cap \pi_1(F')$, where $G'=\ker[\pi_1(E'_U) \to \pi_1(E')]$. We claim that it is enough to show that the natural morphism $G' \to G$ is surjective. Indeed, consider the short exact sequences $1 \to R \to G \to H \to 1$ and the corresponding primed one $1 \to R' \to G' \to H' \to 1$.  Both are exact on the right because $p: E \to B$ and $p':E' \to B'$ satisfy the conditions of Leibman and because of the discussion after diagram (\ref{leibman diagram}). Morever, the morphism between the two fibrations induce a morphism of complexes between the two short exact sequences. It follows that if $G' \to G$ is surjective, then the cokernel of $R' \to R$ is surjected upon by the kernel of the natural morphism $H' \to H$. Since $\pi_1(F') \to \pi_1(F)$ is surjective, then $\pi_1(U')\to \pi_1(U)$ is injective and hence so is $H' \to H$.
As above, we can write any element $\alpha \in G$ as a product of paths of the form $\gamma \partial D_{ij} \gamma^{-1}$ where the discs $D_{ij}$ are as above and the $\gamma$'s are paths in $E_U$ connecting $o$ to $o_{ij} \in \partial D_{ij}$ . Since $p$ is smooth at the general point of $E_{W_i}^j$, we can choose the $D_{ij}$ to be centered at points $x'_{ij} \in E'_{W'}$. Moreover, if locally we trivialize the embedding $E' \to E$, we can use a homotopy to move the disk $D_{ij}$  so that it is actually contained in $E'$. To show that $G' \to G$ is surjective it is therefore sufficient to show that the paths $\gamma$ joining $o$ to $\partial D_{ij} $ are homotopic to paths $\gamma'$ in $E'_{U'}$. Since $F'$ is path connected we can choose a path $\gamma''$ joining $o$ to $\partial D_{ij}$ and lifting $p(\gamma)$. Then $\gamma {\gamma''}^{-1}$ is a loop in $E_U$ which lies in the kernel of $p_*$, i.e., $\gamma {\gamma''}^{-1}=f$ for some $f \in \pi_1(F)$. Since $\pi_1(F') \to \pi_1(F)$ is surjective by assumption, $f$ is homotopic to a loop in $F'$ and hence $\gamma$ is homotopic to $ \gamma':= \gamma'' f$ which is a path in $E'_{U'}$.
\end{proof}

\subsection{Abel--Jacobi maps and the proof of the Theorem}

The next step will be to show, given a family of curves as above, that the Abel--Jacobi map (\ref{Abel Jacobi}) can be extended to an embedding satisfying the assumptions of this lemma.
Before doing so, we need to recall a few facts about extension of Abel--Jacobi maps to singular curves. This topic has been extensively studied. We refer to \cite{Melo-Rapagnetta-Viviani14} and \cite{Caporaso-Coelho-Esteves-08} and the references therein for a more thorough treatment on the topic. Here we limit ourselves to the most basic facts. We start by considering the extension over the locus $V \subset B$ parametrizing singular but integral curves. Following \cite{Melo-Rapagnetta-Viviani14} and \cite{Caporaso-Coelho-Esteves-08}, over $V$
one can extend (\ref{Abel Jacobi}) by considering on $\mc C_V \times_V \mc C_V$ the sheaf
\[
I_\Delta \otimes p_1^* \mc O_{\mc C_V} (\Sigma), \quad \Sigma:=s(B) \subset \mc C_B,
\]
where $I_\Delta$ is the ideal sheaf of the diagonal in $\mc C_V \times_V \mc C_V$. This sheaf defines a flat family of rank one torsion free sheaves of degree zero, parametrized by the second factor $\mc C_V$. As such, it defines a morphism, extending $A_{U_B,s}$, from $\mc C_V$ into the relative compactified Jacobian. For reference, we highlight the following proposition

\begin{prop} \label{abel jacobi for non hyper}
Let $\mc C_B \to B$ be the  family over a general pencil $B \subset |C|$ in a non--hyperelliptic linear system on a general Enriques surface, let $\ov J_B \to B$ the degree zero relative compactified Jacobian of this family. Choose a section of the family and consider the Abel--Jacobi map (\ref{Abel Jacobi}) with respect to this section. Then $A$ extends to an embedding
\[
\mc C \hookrightarrow \ov J
\]
over $B$.
\end{prop}
\begin{proof}
The fact that the morphism $A$ extends was discussed above and the fact that it is an embedding follows from \cite[Thm 1]{Caporaso-Coelho-Esteves-08}.
\end{proof}

 To extend the Abel--Jacobi map over the locus of reducible curves one needs to be more careful as the sheaves of the form $m_{C_b, c} \otimes \mc O_{C_b}(s(b))$ will in general not be semistable.  Melo, Rapagnetta, and Viviani have shown in \cite[Lem 6.1 and Prop. 6.7]{Melo-Rapagnetta-Viviani14} that on a \emph{fixed} curve one can always find a polarization which guarantees stability and hence that, up to suitably choosing the polarization, the assignment
 \[
 C_b \ni c \mapsto m_{C_b, c} \otimes \mc O_{C_b}(s(b)) \in \ov J_{H,0}(C_b)
 \]
 defines an extension of the Abel--Jacobi morphism (notice, however, that if the curve has separating nodes, then the definition has to be tweaked  (\cite[\S 9-10]{Caporaso-Coelho-Esteves-08} \cite[Prop. 6.7]{Melo-Rapagnetta-Viviani14})). However, the polarization for which the Abel--Jacobi map is defined depends on the given curve and hence the construction does not in general work in families. Luckily, for a hyperelliptic linear system on a general Enriques surface we have the following proposition.

\begin{prop} \label{abel jacobi for hyper}
Let $\mc C \to |C|$ be the universal family of a hyperelliptic linear system of genus $g\ge 2$ on a general Enriques surface. Fix $s: |C| \to \mc C$  one of the two sections and let $B \subset |C|$ be the open subset parametrizing curves that are: irreducible; or are the union of two smooth curves meeting in two points (as is the general curve of the component $\Delta_3$ of the discriminant); or are the union of two smooth curves meeting in one point (as is the general curve of the components $\Delta_1$ and $\Delta_2$ of the discriminant). 
%Let $H$ be a polarization and view (\ref{Abel Jacobi}) as a rational map
%\[
%A: \mc C \dashrightarrow J_H^0.
%\]
There exists a $(-g+1)$--general polarization $H$ such that $A$ extends to a regular embedding over $B$
\[
\mc C_B \hookrightarrow \ov J_{H,0}(\mc C_B).
\]
\end{prop}
\begin{proof}
By the remarks before the proposition, we only need to check the extension of the morphism on the locus parametrizing reducible curves. By Proposition \ref{discriminant locus}, in codimension one the only curves that appear are of the form of the form $\Gamma_1+\Gamma_2$, where
\begin{enumerate}
\item [(i)] $\Gamma_1 \in |2e_1|$ and $\Gamma_2 \in |(n-2)e_1+e_2|$;
\item [(ii)] $\Gamma_1 = e_1$ and $\Gamma_2 \in |(n-1)e_1+e_2|$;
\item [(iii)] $\Gamma_1 = e'_1$ and $\Gamma'_2 \in |((n-1)e_1+e_2)'|$
\end{enumerate}
(if $g=2$ only the last two cases occur).
Recall that the two sections of a hyperelliptic linear system come from its base points which were described in Lemma \ref{base points}.  Let $\Sigma:= s(|C|)$ be the image of the section.
Up to switching cases $(ii)$ and $(iii)$ we can assume that the following intersections hold: in case $(i)$: $\Sigma \cap \Gamma_1=0$ and $\Sigma \cap \Gamma_2=1$; in case $(ii)$: $\Sigma \cap \Gamma_1=0$ and $\Sigma \cap \Gamma_2=1$; in case $(iii)$: $\Sigma \cap \Gamma_1=1$ and $\Sigma \cap \Gamma_2=0$.
We will use Lemma \ref{stability on two components} to check whether in the three cases $(i), (ii)$, and $(iii)$ the sheaves of the form $m_p \otimes \mc O_\Gamma(\Sigma)$ are stable. Notice that the only thing that matters for stability are the intersection numbers
\[
a=H \cdot e_1, \quad \text{and} \quad b=H \cdot e_2
\]
so we will drop $H$ from the notation and only use $a$ and $b$. Since $g(C)=n+1$ and we are considering the degree zero Jacobian, $\chi=-n$. For curves of type $(i)$, stability for line bundles becomes
\[
-\ff{2na}{na+b}  \le \chi_1 \le - \ff{2na}{na+b} +2.
\]
If $a$ and $b$ are such that $na>b$ then
\be  \label{H general}
1<\ff{2na}{na+b}<2
\ee
(by choosing $H$ to be appropriate combination of $e_1$ and $e_2$ this can certainly be achieved) so $H$ is $\chi$--general for $\Gamma$  and a line bundle is stable if and only if $-1 \le \chi_1 \le 0$, i.e. if and only if
\[
(\chi_1, \chi_2)=(-1, -n+3) \quad \text{ or } \quad
(\chi_1, \chi_2)=(0, -n+2).
\]
This means that the sheaf $m_p \otimes \mc O_\Gamma(\Sigma \res{\Gamma})$ is stable when $p$ is a smooth point on either of the two components and hence $A$ extends over the smooth locus of curves of type $(i)$. When $p \in \Gamma_1 \cap \Gamma_2$,  then the sheaf $m_p \otimes \mc O_\Gamma(\Sigma \res{\Gamma})$ is locally free at only one node. By Lemma \ref{stability on two components} stability is equivalent to $\chi_1=-1$ and hence $m_p \otimes \mc O_\Gamma(\Sigma \res{\Gamma})$ is stable since Euler characteristics of the restriction to the two components are $(\chi_1, \chi_2)=(-1, -n+2)$.

It is more subtle to extend the morphism over the locus of curves with separating nodes \cite[Prop. 6.7]{Melo-Rapagnetta-Viviani14} (as are the curves of cases $(ii)$ and $(iii)$). To fix ideas, let us consider case $(ii)$. Stability requires $(\chi_1, \chi_2)=(0, -n+1)$ which is satisfied for sheaves the form $m_p \otimes \mc O_\Gamma(\Sigma \res{\Gamma})$ if and only if $p$ is point belonging to $\Gamma_2 \setminus \Gamma_1 \cap \Gamma_2$. Indeed, in this case $(\chi_1, \chi_2)=(0,-n+1)$, otherwise $(\chi_1, \chi_2)=(-1,-n+2)$. Following \cite[6.7]{Melo-Rapagnetta-Viviani14} and \cite[\S 9-10]{Caporaso-Coelho-Esteves-08}. For $p \in \Gamma_1$ we can set $A(p)=m_p \otimes \mc O_\Gamma(\Sigma \res{\Gamma}) \otimes \mc O_\Gamma(\Gamma_1)$. Here, $\mc O_\Gamma(\Gamma_i)$ denotes the restriction to $\Gamma$ of the divisor in $\mc C_B \to B$ that  lives over $\Delta_1$ and is swept out by the components  $\Gamma_i$ of the curves parametrized by $\Delta_1$. Since, $\deg \mc O_\Gamma(\Gamma_i)_{|\Gamma_i}=-1$ and $\deg \mc O_\Gamma(\Gamma_i) _{|\Gamma_i}=1$, the sheaf $m_p \otimes \mc O_\Gamma(\Sigma \res{\Gamma}) \otimes \mc O_\Gamma(\Gamma_1)$ is stable.
Similarly, to extend the morphism over the second component of curves of type $(iii)$, we need to twist by $\mc O_\Gamma(\Gamma_2)$. 

For a hyperelliptic linear system this can work in families since there is no monodromy among the irreducible components so over  $\Delta_1$ and $\Delta_2$ we can single out the first and second components of every curve. More precisely, the morphism $\mc C_B \to \ov J_B$ will be determined, using the universal property of the moduli space, by the sheaf 
\[
\mc I_\Delta \otimes p_1^*\mc O_{\mc C}(\Sigma) \otimes \mc O_{\mc C_B \times_B \mc C_B}(D_1) \otimes \mc O_{\mc C_B \times_B \mc C_B}(D_2)
\]
on $\mc C_B \times_B \mc C_B$, viewed as a family of sheaves parametrized by the second factor $\mc C_B$. Here $D_1 \subset \mc C_B \times_B \mc C_B$ is the component of  $(p \times p)^{-1} (\Delta_1 \times \Delta_1)$ parametrizing  pairs of points $(x, y) \in \mc C_B \times_B \mc C_B$ both of which belong to $e_1$ and  $D_2 \subset \mc C_B \times_B \mc C_B$ is the component of $(p \times p)^{-1}(\Delta_2 \times \Delta_2)$ 
parametrizing  pairs of points $(x, y) \in \mc C_B \times_B \mc C_B$  both of which belong to $|((n-1)e_1+e_2)'|$.

Finally, by \cite[Thm 1]{Caporaso-Coelho-Esteves-08}  (see also \cite[Fact 6.10]{Melo-Rapagnetta-Viviani14}) the Abel--Jacobi map is an embedding precisely away from the components of a curve which are smooth rational curves whose intersection with the rest of the curve consists in two separating nodes. 
\end{proof}

From the proof of the above proposition we may deduce the following Corollary.

\begin{cor} \label{components C and J}
Let $\Gamma=\Gamma_1+\Gamma_2$ be a general curve in $\Delta_3$, let $H$ be a general polarization satisfying $na>b$ as above and let $\bar J_H(\Gamma)$ be the relative compactified Jacobian of degree $0$ pure sheaves on $\Gamma$, stable with respect to $H$. Then,  the Abel--Jacobi map defined above embeds
$\Gamma_1$ in the component corresponding to $(-1, -n+3)$ and embeds  $\Gamma_2$ in the component corresponding to $(0, -n+2)$. 
\end{cor}

We can finally show

\begin{thm} \label{pi one for hyperelliptic}
Let $|C|$ be a hyperelliptic linear system of genus $g \ge 2$, and let $N$ be as in Theorem \ref{thm smooth}. Then
\[
\pi_1(N) \cong \Z \slash (2).
\]
and the kernel of the natural morphism $\pi_1(C_{t_o}) \to \pi_1(N)$ is equal to $f_*(\pi_1(D_{t_o}))$. 
\end{thm}
\begin{proof} Recall that $|C|$ is a primitive linear system and hence by Proposition \ref{Yoshioka assumption} it satisfies Assumption \ref{assumption}.
By Lemma \ref{independent of H and d} it is enough to look at the case when the degree is zero and when $H$ is general and satisfes (\ref{H general}). Then by Proposition \ref{abel jacobi for hyper} we can apply Lemma \ref{comparing R} to $\mc C_B \to B$ and $N=\ov J_B \to B$, which gives surjectivity of the map $r$ in diagram (\ref{Leibman seq for C and J}). Hence, by Lemma \ref{if r surj then okay} the morphism $A_*: \pi_1(\mc C )=\Z \slash (2) \to \pi_1(N)$ is an isomorphism and $\ker[\pi_1(C_{t_o}) \to \pi_1(N)]=f_*(\pi_1(D_{t_o}))$.
\end{proof}

Let us now come to the case of a non--hyperelliptic linear system. Since there is no section, the strategy is to first compute the fundamental group of the relative compactified Jacobian of a general pencil (which admits a section) and then to pass from there to the family over the complete linear system.

\begin{lemma} \label{from pencil to whole}
Let $|C|$ be a genus $g \ge 3$ non--hyperelliptic linear system on a general Enriques surface $T$ and let $N$ be as in Theorem \ref{thm smooth}. Let $B \subset |C|$ be a general pencil, let $\mc C_B \to B$ be the universal family of curves and let $\ov J_B=\ov J(\mc C_B)=N \times_B |C| \to B$ be its degree $d$ relative compactified Jacobian. Then
\[
\pi_1(\ov J_B)\cong \Z \slash(2). 
\]
and there is a surjection
\[
\pi_1(\ov J_B) \twoheadrightarrow \pi_1(N).
\]
\end{lemma}
\begin{proof} The proof is the same as for the case of a hyperelliptic linear system.
Since $\mc C_B \to B$ has a section, the birational class of $\ov J_B$ does not depend on the degree (nor on the polarization) and hence we can assume that $d=0$. By Proposition \ref{abel jacobi for non hyper} there is an embedding $A: \mc C_B \to \ov J_B$ which, by Lemmas \ref{comparing R} and \ref{if r surj then okay} induce an isomorphism  $\pi_1 (\mc C_B) \cong \Z \slash (2)$. The second statement is standard. It can, for example, be proved comparing diagrams \ref{leibman diagram} for the two families and using the fact that $\pi_1(U \cap B) \to \pi_1(U)$ is surjective (or also using \cite[Thm 1.1]{Goreski-MacPherson}).
\end{proof}

This shows that there is a $2:1$ cover of $\ov J_B$. Using the norm map we wish to extend this cover to all of $N$.

Let $U' \subset |C|$ be the locus parametrizing integral curves. The norm map will allow us to extend the covering to $N_{U'}=\ov J(\mc C_{U'})$, showing that the surjection $\Z \slash (2)=\pi_1(\ov J_B) \to N_{U'}=\ov J(\mc C_{U'})$ is an isomorphism. To extend this result to all of $N$, we need to assume that $|C|$ satisfies Assumption \ref{assumption}. Indeed, if this holds then $\codim(N\setminus N_{U'}, N ) \ge 2$ and hence $\pi_1(N_{U'})=\pi_1(N)$.

\begin{prop} \label{norm map}
Let 
\[
\xymatrix{
\mc D \ar[dr] \ar[rr]_{2:1}^f && \mc C  \ar[dl] \\
& B &
}
\]
be a family of \'etale double covers between reduced and irreducible curves with locally planar singularities. Let $d$ be an integer and let $\ov J_d({\mc D}) \to B$ and $\ov J_d({\mc C}) \to B$ be the relative degree $d$  compactified Jacobian of the families $\mc D$ and $\mc C$. There is a natural fiberwise \'etale double cover of $\wt {J_d(\mc C)} \to J_d(\mc C)$, which on the Jacobian of a smooth curve $C_b$ is induced by the index two subgroup
\be \label{fiberwise cover}
f_* H_1(D_b, \Z) \subset H_1(C_b, \Z).
\ee
\end{prop}
\begin{proof}
The proof is based on the norm map, see \cite[\S 6.5]{EGAII} and \cite[\S 21.5]{EGAIViv}. Let $J_d({\mc D}) \subset \ov J_d(\mc D)$ and $J_d({\mc C}) \subset \ov J_d(\mc C)$ be the open loci parametrizing locally free sheaves. For simplicity, let us consider the case where the general curve is smooth and where the total space $J_d({\mc C})$ is also smooth (this is the only case we will need; in any event, the general case can be deduced from this using versal deformation spaces). Since the codimension of the complement of $J_d({\mc C})$ in $ \ov J_d(\mc C)$ is of codimension $\ge 2$, $\pi_1(J_d({\mc C}))=\pi_1(\ov J_d({\mc C}))$ and it will be enough to study the double cover over $J_d({\mc C})$ as it will automatically extend to the whole $\ov J_d({\mc C})$. The norm map is the morphism defined by (cf. \cite[\S 6.5]{EGAII})
\be \label{norm map 1}
\begin{aligned}
N_{\mc D \slash \mc C}: J_d({\mc D}) & \longrightarrow J_d({\mc C}) \\
L & \longmapsto \det f_* L \otimes (\det f_* \mc O_{\mc D})^{-1}
\end{aligned}
\ee
For $b \in B$, let $D_b \to C_b$ be the restriction to $b$ of the $2:1$ cover. If  $L=\mc O_{D_b}(\sum n_i x_i)$, then \cite[21.5.5]{EGAIViv}
\be  \label{explicit norm}
N_{{\mc D \slash \mc C}}(L)=\mc O_{C_b}(\sum n_i f(x_i)),
\ee
It is well known \cite{MumfordPryms}, that over the locus of smooth curves the kernel of the norm map has two connected components. To show that this is the case also for a singular (but irreducible) curve we can argue in the following way.  Suppose that $D_b$ and $C_b$ are singular curves and let $n: \widehat C_b \to C_b$ and $m: \widehat D_b \to D_b$ be their normalizations. The norm map is compatible with the pullback to the normalizations (cf. \cite[Prop. 6.5.8]{EGAII}), in the sense that the following is a commutative diagram of short exact sequences of groups
\be \label{norm map 2}
\xymatrix{
 0 \ar[r] &  G \times G \ar[r] \ar[d]^\sum & J_{ D_b} \ar[d]^{N} \ar[r]^{m^*} & J_{\wt D_b} \ar[d]^{\wt N}  \ar[r] &  0 \\
 0 \ar[r] &  G  \ar[r] &J_{ C_b} \ar[r]^{n^*} & J_{\wt C_b} \ar[r] & 0
}
\ee
Here $G\cong n_* \mc O^*_{\wt C_b} \slash \mc O^*_{\wt C_b}$ and the two factors of $G \times G \cong m_* \mc O^*_{\wt D_b} \slash \mc O^*_{\wt D_b}$ are exchanged by the involution. By definition of the norm map, if $z=(x,y) \in G \times G$ then $N(z)$  is the determinant of the endomorphism $\mu_z: G \times G \to G \times G$ determined by the multiplication by $z$. Given that the algebra structure on $G \times G$ is simply the product structure, multiplication by $z=(x, y)$ is just multiplication by $x$ on the first component and by $y$ on the second so $\det \mu_z=xy$. This shows that the restriction of $N$ to $G \times G$ is the product map. From (\ref{norm map 2}) it then follows that $\ker N$ has two connected components, which shows that if we consider the quotient $\wt{J({\mc C})} $ of $J_d(\mc D)$ by the identity component of the norm map we get a commutative diagram
\be \label{two to one norm}
\xymatrix{
J_d({\mc D})  \ar[dr] \ar[rr] && J_d({\mc C})  \\
&  \wt{J_d({\mc C})}  \ar[ur]_{2:1}^\Phi &
}
\ee
which is the desired $2:1$ fiberwise cover and which can be interpreted as the Stein factorization of (\ref{norm map 1}). 
We are left with determining what this double cover is for the Jacobian of a smooth curve $C_b$. Since by construction $ J_d({\mc D})  \to \wt{J_d({\mc C})} $ has connected fibers, it is clear that $\im[ \pi_1(J_{ D_b}) \to \pi_1(J_{ C_b})]=\im[ \pi_1(\wt{J_{\mc C}} \res{b}) \to \pi_1(J_{ C_b})]$. If we consider degree $d$ Abel--Jacobi maps  $D_b \to J_d({ D_b})$ and $C_b \to J_d({ C_b})$ with respect to points $x \in D_b$ and $f(x) \in C_b$, then they will be compatible with $f$ and with the norm map. Using this we can see that on each smooth fiber $\im[ \pi_1(\wt{J_{\mc C} }\res{b}) \to \pi_1(J_{ C_b})]=f_* H_1(D_b, \mathbb Z) \subset H_1(C_b, \mathbb Z) $ as desired.
\end{proof}

\begin{cor} \label{pi one non hyper}
Let $|C|$ be a non--hyperelliptic linear system on a general Enriques surface and let $N$ be as in Theorem \ref{thm smooth}. Then there is a surjection
\[
\Z \slash (2) \to \pi_1(N)
\]
which is an isomorphism in case $|C|$ satisfies Assumption \ref{assumption}.
\end{cor}
\begin{proof} The existence of a surjection $\Z \slash (2) \to \pi_1(N)$ for a non--hyperelliptic linear system holds, unconditionally, thanks for Lemma \ref{from pencil to whole}.
 If a non--hyperelliptic linear system satisfies Assumption \ref{assumption}, then the locus in $N$ parametrizing sheaves supported on non integral curves has codimension $\ge 2$, so we can remove it without affecting the fundamental group. Proposition \ref{norm map} shows that there is a surjection $\pi_1(N) \to \Z \slash (2)$, which means that the morphism $\Z \slash (2)=\pi_1(N_B) \to \pi_1(N)$ has to be an isomorphism.
\end{proof}

Using the norm map we can also give the following geometric interpretation to the universal cover
\be \label{psi}
\Psi: \Nt \to N,
\ee
of $N$ (under the hypothesis that $|C|$ satisfies Assumption \ref{assumption} so that $\pi_1(N)=\Z \slash (2)$).

\begin{prop} \label{universal cover and norm map}
Let $|C|$ be a genus $g \ge 2$ linear system on a general $T$ and suppose that it satisfies Assumption \ref{assumption}. For every $t$ in the open set $U$ parametrizing smooth curves, we let $C_t$ be the corresponding curve and we set $D_t=f^{-1}(C_t)$. Then
\[
\ker[\pi_1(N_t) \to \pi_1(N)]=f_* H_1(D_t, \Z).
\]
In addition, over the locus $U'$ of integral curves the universal cover (\ref{psi}) agrees with the $2:1$ cover $\Phi$ induced by the norm map  (\ref{two to one norm}) .
\end{prop}
\begin{proof}
If $|C|$ is not hyperelliptic, then this is a corollary of the construction of $\Psi$ as an extension of $\Phi$. Suppose therefore that $|C|$ is hyperelliptic. The first statement follows from Proposition \ref{norm map} and from the second statement Theorem \ref{pi one for hyperelliptic}. Let us consider the two fiberwise  coverings  $\Psi$ and $\Phi$ of $N_{U'}$. They are defined by two surjections $\eta_\Phi: \pi_1(N_{U'}) \to \Z \slash (2)$ and $\eta_\Psi: \pi_1(N_{U'}) \to \Z \slash (2)$ which we want to prove are the same morphism. By the first statement we know that $\Psi$ and $\Phi$ induce the same cover over the Jacobians of smooth curves (in fact, it is not hard to show that they induce the same covering also for $1$--nodal curves). Since the surjection $\pi_1(N_U) \to \pi_1(U)$ is split, $\Psi$ and $\Phi$ define the same covering of $N_U$.
Letting $j_*: \pi_1(N_U) \twoheadrightarrow \pi_1(N_{U'}) $ be the natural morphism associated to the open embedding $j: N_U \to N_{U'}$, we may deduce that  $\eta_\Phi \circ j_*= \eta_\Psi \circ j_* $. Since $j_*$ is surjective this shows that $\eta_\Phi=\eta_\Psi$ and hence that $\Psi$ and $\Phi$ define the same covering of $N_{U'}$.
\end{proof}

We finish this section with a results that will be used in Section \ref{second betti number}.

\subsection{Vanishing cycles} \label{section vanishing cycles}
Let us now come to the result on vanishing cycles which will be used in Section \ref{second betti number}. We use the notation introduced at the beginning of this Section and before Lemma \ref{comparing R}.

%%%Again, we follow Leibman ($(\gamma')$ on pg 102 and pg 104). For this part we also need that $p: E \to B$ is smooth at the general point of $E_W$. In the case of $N \to |C|$ this follows from Proposition \ref{MRV on compactified Jac}.
%%%
%%%For every component $E^j_{D_i}$ of $p^{-1}(W_i)$ we can choose a general point $q_{ij}$, lifting $x_i$, and a small disk $D_{ij}$, lifting $D_i$,  which is transversal to $E^j_{D_i}$ and only meets it in $q_{ij}$. This is possible because $p$ admits local sections at the general point of every component of $E_W$. For every $i$, we can choose these lifts so that the one corresponding to the component meeting the section $s(B)$ is precisely $s(D_i)$. Moreover, for any path $\gamma$ joining $p(o)$ to $\partial D_i$ as above we can choose (since the fibers over $U$ are path connected) a path in $E_U$ which lifts $\gamma$ and which joins the base point in $E_U$ to a fixed point $o_{ij} \in \partial D_{ij}$. Notice that we can chose such points  $o_{ij}$ so that for fixed $i$ they lie over the same point $o_i \in \partial D_i$ and so that the point lying in $s(D_i)$ is precisely $s(o_i)$.

Recall that for every irreducible components $W_i$ of $W$ we have chosen discs $D_i\subset B$ which are transversal to $W_i$ at a chosen point $x_i$ and we have picked a point $o_i \in \partial D_i$.
 For every $i$,  let us consider the restriction
 \[
 E_{D_i} \to D_i
 \]
 and choose $s(o_i)$ as base point for $\pi_1(E_{D_i})$. Consider the usual specialization map
\be \label{local vanishing cycles}
{sp_i}_*: \pi_1(F) \to \pi_1(E_{D_i}) \cong \pi_1(E_{x_i}), \quad E_{x_i}=p^{-1}(x_i),
\ee
where the isomorphism $\pi_1(E_{D_i}) \cong \pi_1(E_{x_i})$ comes from a retraction $E_{D_i} \to E_{x_i}$.
Let $V_i$ be a set of generators of  $ \ker {sp_i}_*$ (which by definition is the group the vanishing cycles of the family $E_{D_i}$).  Given a loop $v_i \in V_i$ based in $s(o_i)$  and any path $\gamma$ joining $p(o)$ to $ o_i \in \partial D_i$ as above, we can form a loop  $v $ in $ E_U$ by setting
\[
v=s_*(\gamma) v_i s_*(\gamma)^{-1}.
\]
Denote by $V$ the set of paths obtained by doing this for every component of $W$, and by $V^\rho$ the normal subgroup generated by $V$ and all its conjugates under the monodromy action of $\pi_1(U)$ on $\pi_1(F)$.

\begin{prop} \cite{Leibman} \label{what is R}
$R=[\pi_1(F), s_*H]  \cdot V^\rho$ (here the commutator $[\pi_1(F), s_*H]$ is taken in $\pi_1(E_U)$ and, since $\pi_1(F) \subset \pi_1(E_U)$ is normal, it is contained in $\pi_1(F)$).
\end{prop}
\begin{proof} This proof imitates the proofs of \cite[Lem. 1.2 and 1.7]{Leibman}.
An element $\alpha$ in $R=\pi_1(F) \cap G$ can be represented as the boundary of a map $\varphi: D \to E$, with $o \in \varphi (\partial D) \subset E_U$ and such that  $p_*(\varphi (\partial D))=1$ in $\pi_1(U)$. By transversality, $\varphi(D)$ can be made transversal to each component $E^j_{W_i}$ at the chosen points $q_{ij}$ and such that the intersection of $\varphi(D)$ with a suitable neighborhood of $q_{ij}$ is contained in $D_{ij}$ for every $i$ and $j$. Choose an orientation for $\partial D_{ij}$. We can write $\alpha=\prod \alpha_{ij}$ where
\[
\alpha_{ij}=\beta_{ij} \partial D_{ij} {\beta_{ij}}^{-1}
\]
for a path $\beta_{ij}$ in $E_U$ joining $o$ to the chosen point $o_{ij} \in \partial D_{ij} $. Notice that to write $\alpha$ as a product we have defined an ordering on the set of bi-indices $ij$, i.e. we have chosen a bijection
$\nu:\{ ij\} \to L=\{1, \dots, N\}$  such that we can write
\[
\alpha=\prod_{{(ij) \, | \, \nu(ij) =1}}^{N} \alpha_{ij} .
\]

Join $s(o_i) \in s_*(D_i)$ to $o_{ij}$ via a path $\epsilon_{ij}$ in the fiber $E_{x_i}$. Set $\gamma_{ij}:=p_* \beta_{ij}$, so that $\gamma_{ij}$ is a path in $U$ joining $p(o)$ to $p(o_i)$. Then $\beta_{ij} {\epsilon_{ij}}^{-1}(s_*\gamma_{ij})^{-1}=f_{ij}$, for some $f_{ij} \in \pi_1(F)$. Since $p_*(\epsilon_{ij} \partial D_{ij} {\epsilon _{ij}}^{-1})=p_*(s_* \partial D_i)$, the difference
\[
v_{ij}:=s_*( \partial D_i)^{-1} \epsilon_{ij} \partial D_{ij} {\epsilon _{ij}}^{-1}
\]
lies in $\ker({sp_i}_*)$. This shows we can write
\[
\alpha_{ij} =\beta_{ij} \partial D_{ij} {\beta_{ij}}^{-1}= \beta_{ij}  {\epsilon _{ij}}^{-1} \underbrace{ {\epsilon _{ij}} \partial D_{ij}  {\epsilon _{ij}}^{-1}}_{s_*( \partial D_i )v_{ij}  } {\epsilon _{ij}} {\beta_{ij}}^{-1}=
\]

\[
=\beta_{ij}  {\epsilon _{ij}}^{-1}  s_*( \partial D_i )v_{ij}   {\epsilon _{ij}}{\beta_{ij}}^{-1}=
f_{ij} s_*(h_{ij}) {f_{ij}}^{-1} w_{ij},
\]
where $h_{ij}= \gamma_{ij} \partial D_i \gamma_{ij}^{-1}$ and
\[
w_{ij}={\beta_{ij}} {\epsilon _{ij}}^{-1}  v_{ij}  {\epsilon _{ij}} {\beta_{ij}}^{-1}.
\]
We can write
\[
w_{ij}=s_*(\gamma_{ij}) d_{ij} v_{ij} {d_{ij}}^{-1}s_*(\gamma_{ij})^{-1},
\]
where $d_{ij}:=s_*(\gamma_{ij})^{-1} \beta_{ij} {\epsilon _{ij}}^{-1} $ is homotopic to a loop in the fiber $p^{-1}(o_i)$. Since clearly $d_{ij} v_{ij} {d_{ij}}^{-1} \in \ker({sp_i}_*)$,  we have $w_{ij} \in V$. We can thus write 
\[
\alpha=\prod_{ij} [f_{ij}, s_*(h_{ij}) ] s_*(h_{ij}) w_{ij}=\prod_{\nu=1}^N [f_{\nu}, s_*(h_{\nu}) ] s_*(h_{\nu}) w_{\nu},
\]
where in the last equality we have used the ordering of the bi--indices introduced above. 
Following Leibman (pgg. 100 and 103) we can write 
\[
\alpha=\prod_{\nu=1}^N [{f'}_{\nu}, s_*(h'_\nu) ]  {w'}_{\nu} \prod_{\nu=1}^N s_*(h_\nu),
\]
where
\[
f'_\nu:=\Big(\prod_{\xi=1}^{\nu-1} s_*(h_\xi) \Big) f_\nu \Big(\prod_{\xi=1}^{\nu-1} s_*(h_\xi) \Big)^{-1}
%%
%%{f'}_{ij}:=\Big(\prod_{\substack{\ell=\nu(ij)-1 \\ i' \, | \, (i'j')=\nu^{-1}(\ell)}}^1 s_*(h_{i'}) \Big)^{-1} \, f_{ij} \, \Big(\prod_{\substack{\ell=\nu(ij)-1 \\ i' \, | \, (i'j')=\nu^{-1}(\ell)}}^1   s_*(h_{i'})\Big),
\]
\[
s_*h'_\nu:= \Big(\prod_{\xi=1}^{\nu-1} s_*(h_\xi) \Big) s_*h_\nu  \Big(\prod_{\xi=1}^{\nu-1} s_*(h_\xi) \Big)^{-1}
%
%
%s_*{h'}_i:=\Big(\prod_{\substack{\ell=\nu(ij)-1 \\ i' \, | \, (i'j')=\nu^{-1}(\ell)}}^1 s_*(h_{i'}) \Big)^{-1} \, s_* h_i   \, \Big(\prod_{\substack{\ell=\nu(ij)-1 \\ i' \, | \, (i'j')=\nu^{-1}(\ell)}}^1  s_*(h_{i'})\Big) , 
\]
\[
w'_\nu:= \Big(\prod_{\xi=1}^{\nu} s_*(h_\xi) \Big) w_\nu  \Big(\prod_{\xi=1}^{\nu} s_*(h_\xi) \Big)^{-1}
% {w'}_{ij}:=\Big(\prod_{\substack{\ell=\nu(ij)-1 \\ i' \, | \, (i'j')=\nu^{-1}(\ell)}}^1 s_*(h_{i'}) \Big)^{-1} \, w_{ij} \, \Big(\prod_{\substack{\ell=\nu(ij)-1 \\ i' \, | \, (i'j')=\nu^{-1}(\ell)}}^1  s_*(h_{i'})\Big).
\]
Notice also that $p_* \prod_{\nu=1}^N s_*(h_\nu)=1$ and since $p_* s_*=\id$, then $\prod_{\nu=1}^N s_*(h_\nu)=1$, and hence
\[
\alpha=\prod_{\nu=1}^N [{f'}_{\nu}, s_*(h'_\nu) ]  {w'}_{\nu}.
\]
To finish the proof we only have to notice that ${w'}_{\nu} \in V^\rho$. This follows from that fact that if $h \in \pi_1(U)$ and $w \in \pi_1(F)$, then $s_*(h) w s_*(h)^{-1}$ is exactly the monodromy action $\rho_h(w)$ of $h$ on $w$ (cf. \cite{ASF12}, \S 7, in particular, (7.7) and (7.9)). Since we have already observed that $w_\nu \in V$, it follows that ${w}_{\nu} \in V^\rho$.

%% so that
%%\be \label{commutator monodromy}
%%[f, s_*(h)]=f \rho_h(f^{-1}).
%%\ee
%%and 
%% ${w'}_{ij} \in V^\rho$.

\end{proof}

Let us now apply this result when $[E \to B]=[\mathcal C\to B]$ is a smooth family of reduced  curves whose general member is smooth  of genus $g$ and such that in codimension one we have only nodal curves of geometric genus $g-1$ (this is indeed the case for the  families we consider).  Suppose the family has a section so that we can apply the remarks above. Consider, as we did before Lemma \ref{comparing R}, generators of $H$ of the form $h_{i, \gamma}= \gamma \partial D_i \gamma^{-1}$. For each $h_i$, the monodromy operator $\rho_{h_i}$ on the fiber $ C_{t_o}$, for $t_o:=p(o)$, is the Dehn twist around a closed loop $c_i=c_{i, \gamma} \subset  C_{t_o}$ , called the vanishing cycle associated to $h_i$ (see \cite[Chapter XI]{GACII}). The choice of $c_i$ depends on the choice of local coordinates, but its  homology class  is well defined (up to a sign). To see how it compares to the local vanishing cycles of $\ker({sp_i}_*)$ (cf. (\ref{local vanishing cycles})), consider a $v_i  \in V_i$ and  the loop $ s_*( \gamma) v_i  s_*(\gamma)^{-1}$, which clearly belongs to $R$. Since we are assuming that in codimension one the genus drops only by one, the image in homology of $\ker({sp_i}_*)$ has rank one and hence the classes of $s_*( \gamma) v_i  s_*(\gamma)^{-1}$ in $H_1(C_{t_o}, \Z)$ are all a multiple of $c_i$. In particular, the classes of the ${w'}_{ij}$ defined in the previous Proposition will also be a multiple of $c_i$.

%
% and for each $i$, let $c_i \in H_1(C_o)$ be the vanishing cycle associated to $h_i$. Notice that though not well defined in homotopy, the vanishing is well defined (up to a sign) in homology. 
%Since by assumption in codimension one the genus drops only by one, for any $v_i \in V_i$ we have
%\be \label{c and v}
%\mathbb Z c_i= \mathbb Z  \gamma v_i \gamma^{-1}\in H_1(C_o)
%\ee
Also, for any $f \in \pi_1(C_{t_o})$ we have, passing to $H_1(C_{t_o}, \Z)$, switching to the additive notation, and using that $[f, s_*(h_i)]=f \rho_{h_i}(f^{-1}).$
\be \label{commutator and monodromy}
[f, s_*(h_i) ] =f-f-(f, c_i)c_i=-(f, c_i)c_i \in H_1(C_{t_o}, \Z).
\ee

\begin{cor} \label{generated by vanishing cycles}
Let $p: \mc C \to B$ be a family of curves as above.  And let $R_{\mc C}=\ker[\pi_1(C_{t_o}) \to \pi_1(\mc C)]$ be as in (\ref{leibman diagram}).
Then the image of $R_{\mc C}$ in $H_1(C_{t_o}, \Z)$ is generated by the vanishing cycles associated with a set of generators of $H$.
\end{cor}
\begin{proof}
This is an immediate consequence of Proposition \ref{what is R}, of the definition of vanishing cycle, and of (\ref{commutator and monodromy}).
\end{proof}

\section{The canonical bundle} \label{canonical bundle}

The aim of this section is to show that the canonical bundle of $N$ is trivial. We start with the following adaptation of Theorem 8.3.3 of \cite{Huybrechts-Lehn} to our context
%whose proof can be found in \cite{Huybrechts-Lehn}.

\begin{prop} [\cite{Huybrechts-Lehn}]
Let $T$ be an Enriques surface, and let $M$ be a component of a moduli space parametrizing stable sheaves $F$ such that $F \ncong F \otimes \omega_Y$. 
Then the canonical bundle is torsion, i.e.,
\[
\omega_M=0 \,\, \text{ in } \Pic(M)_\Q.
\]
\end{prop}
\begin{proof}
First, notice that $M$ is smooth since by assumption the obstructions vanish. Even though Chapter 8 of \cite{Huybrechts-Lehn} is formulated for sheaves of positive rank, one can go through all the results needed for the proof of Theorem 8.3.3 and check that they work, with the appropriate modifications, also in the case of pure dimension one sheaves. %(in the notation of Huybrechts and Lehn, one should replace the class $\Delta(F)$ with the degree two part of $-[\ch(F^\bullet) ^\vee \cdot \ch(F^\bullet)]$, where $F^\bullet$ is a locally free resolution of $F$.
\end{proof}

Recall from (\ref{N}),  (\ref{Y}) and (\ref{psi}) the definitions of $N$, $Y$ and $\Nt$.

\begin{cor} With the assumptions of Theorem \ref{thm smooth}, we have
$\omega_{\Nt} \cong \mc O_{\Nt}$
\end{cor}
\begin{proof}
By the proposition above, $\omega_{\Nt}$ is a torsion class in $\Pic(\wt N)$, but since $\wt N$ is simply connected, this class has to be trivial.
\end{proof}

\begin{prop} \label{comparing the canonical} Let the assumptions be as in Theorem \ref{thm smooth}. The following are equivalent\\
$i)$The canonical bundle of $Y$ is trivial;\\
$ii)$The canonical bundle of $N$ is trivial;\\
$iii)$The canonical bundle of $\Nt$ is trivial.
\end{prop}
\begin{proof}
Since $\Phi: N \to Y$ and $\Psi: \Nt \to N$ are \'etale, we only need to prove that $ii)$ implies $i)$ and that $iii)$ implies $ii)$. We start with the first implication, so let us suppose that $\omega_N \cong \mc O_N$.
Recall from formula (\ref{define L}) the definition of $L$.  Then $\Phi^*\omega_Y\cong \mc O_N$, implying that either $\omega_Y$ is trivial, or that it is isomorphic to $L$.  Consider a point $t \in U$ and denote, as usual, by $Y_t$ the fiber over $t$. To conclude we make the following two claims. The first  is that $L_{Y_t}$ is not trivial, which follows immediately from (\ref{pullback jacobians}). 
Whilst the second claim is that $({\omega_Y})_{|Y_t}$ is trivial which follows immediately from the fact that $\omega_{Y_t}$ and $N_{Y_t|Y}$ are trivial. Hence, $\omega_Y \ncong L$.
The same argument applies to show that the canonical bundle of $\Nt$ is trivial if and only if the canonical bundle of $N$ is trivial. In fact, the only thing we need is that on each fiber the double cover $\Nt_t \to N_t$ is non-trivial, and this is a consequence of Corollary \ref{universal cover and norm map}.
\end{proof}

The steps above prove the following theorem

\begin{thm} \label{thm canonical}
Let $|C|$ be a genus $g \ge 2$ linear system on a general Enriques surface $T$ and let $N$ be as in Theorem \ref{thm smooth}. Then
\[
\omega_N\cong \mc O_N.
\]
\end{thm}

Notice that Propositions \ref{comparing the canonical} and \ref{comparing the canonical}, and hence Theorem \ref{thm canonical}, are \emph{not} conditional to Assumption \ref{assumption}.

\begin{cor}\emph{
With the same assumptions as in the theorem above,
\[
\chi({\mc O}_Y)=\chi({\mc O}_N)=0.
\]}
\end{cor}
\begin{proof}
By Serre duality, this is true for any odd--dimensional Calabi--Yau manifold.
\end{proof}

Moduli spaces of sheaves on a K3 surface share many properties of the surface itself. As we saw in Theorem \ref{thm canonical}, this is not the case for Enriques surfaces. Another instance of this lack of analogy is the fact that the universal cover of $N$ induces a \emph{non-trivial} cover of every fiber: any Enriques surface $T$ admits an elliptic fibration $T \to \P^1$ with exactly two multiple (double) fibers. The canonical bundle of $T$ is the difference of the two half fibers and, moreover, the universal cover $f: S \to T$ induces a \emph{trivial} cover of every reduced fiber; indeed, if $e$ is a primitive elliptic curve in $T$, then for any reduced curve $\Gamma $ belonging to $|2e|$, $f^{-1}(\Gamma)$ is the disjoint union of two members of $|f^*e|$. In fact, the covering $S \to T$ is induced by base change via a degree two morphism $\P^1 \to \P^1$ (ramified at the two points corresponding to the non-reduced fibers) whereas in the case of $\nu: N \to |C|$, as we have already mentioned, the restriction of the universal cover to the fibers of $\nu$ is non-trivial.

This difference in behavior appears also in comparison to other types of moduli spaces of sheaves. In \cite{Oguiso_Schroer11}, Oguiso and Schr\"oer prove that the Hilbert scheme of $n$ points on a given Enriques surface $T$ has the property that the canonical bundle is not trivial, but twice the canonical bundle is trivial. It would be interesting to know if one could extract a general principle from this phenomenon, i.e., that the canonical bundle of a moduli space depends on the parity of its dimension. It would also be interesting to study, given a genus $g$ linear system $|C|$, the geometry of the rational Abel-Jacobi  maps
\[
T^{[g-1]} \dasharrow N^{g-1}, \quad \,\, \text{ and } \,\,\,\,\,\,\, N^g \dasharrow T^{[g]}.
\]
Here $N^d$ denotes the degree $d$ relative compactified Jacobian of $|C|$.

We end the section with the main result

\begin{thm} \label{calabi-yau}
Let $N$ be as in Theorem \ref{thm smooth}. Then the Calabi-Yau manifold $N$ is irreducible. By this we mean that
 \[
 H^p(N, {\mc O}_N)\cong \left\{\begin{aligned}
 &\C \,\,\,\, \text{if} \,\,\,p=0, \text{ or } p=2g-1,\\
 & 0 \,\,\,\,\, \text{otherwise}.
\end{aligned}
\right.
 \] 
\end{thm}

The main step in proving the theorem is the following proposition that computes the higher direct images of the structure sheaf by using the corresponding result of Matsushita \cite{Matsushita05} for the morphism $\pi: M \to |D|$.

\begin{prop} \label{higher direct images} Let $N$ be as in Theorem \ref{thm smooth}. Then,
\be
R^i\nu_*{\mc O}_N\cong \wedge ^i \oplus_{j=1}^g {\mc O}_{\P^{g-1}}(-1).
\ee
\end{prop}

Since
\[
H^k(\P^{g-1}, {\mc O}_{\P^{g-1}}(-p+k))\cong \left\{\begin{aligned}
&\C \quad \quad  \text{if } (k,p)=(0,0),  \quad \,\, \text{or } \,\,\, (k,p)=(g-1, 2g-1)\\
&0 \quad \quad \,\,\, \text{otherwise}
\end{aligned}
 \right.
\]
the spectral sequence calculating $H^p(N, {\mc O}_N)$ degenerates, and the theorem easily follows.

\begin{proof}[Proof of Proposition \ref{higher direct images}]
Since the canonical bundles of $N$ and of $Y$ are trivial, by Theorem 2.1 in \cite{Kollar1} the sheaves $R^i\nu_*{\mc O}_N$  and $R^i\pi_*{\mc O}_Y$ are torsion free and by Corollary 3.9 in \cite{Kollar86} they are reflexive. 
Moreover, $\Phi$ is a finite  morphism so the spectral sequence associated to the composition of functors yields an isomorphism (recall that $L$ was defined in (\ref{define L}))
\[
R^i\nu_*{\mc O}_N \cong  R^i\pi_*{\mc O}_Y \oplus R^i\pi_*L.
\]
However, since $L_{|Y_t}$ is a non-trivial torsion line bundle, the higher direct images $R^i\pi_*L$ are supported on the discriminant locus of $|C|$ and since the sheaf $R^i\nu_*{\mc O}_N $ is torsion free, they have to be identically zero. It follows that
\be \label{N and Y}
R^i\nu_*{\mc O}_N \cong  R^i\pi_*{\mc O}_Y.
\ee
The proof of the proposition can then be deduced from the following three claims, whose proof uses Proposition \ref{degenerations} below.

\subsection*{Claim I} $R^1\nu_*{\mc O}_N \cong R^1p_*{\mc O}_{\mc C}$. 

\subsection*{Claim II} $R^1\nu_*{\mc O}_N\cong \oplus_{i=1}^g {\mc O}_{\P^{g-1}}(-1)$.

\subsection*{Claim III} $R^i\nu_*{\mc O}_N \cong \wedge ^i R^1\nu_*{\mc O}_N$.

For Claim I, first notice that there is a natural  isomorphism over the locus $U \subset |C|$ of smooth curves (e.g. see Lemma \ref{higher direct images different degree} below).
By Proposition \ref{degenerations} below, the isomorphism extends naturally over the general point of every component of the discriminant. Hence, there is an open subset $W \subset |C|$, whose complement has codimension greater or equal to two, over which the sheaves in question are isomorphic. Since they are reflexive sheaves, this isomorphism extends to an isomorphism over all of $|C|$. 

To show Claim III, we first use Proposition \ref{degenerations} as well as Claim I to find an isomorphism which is defined over an open set $W$ as above. Since by Claim II $R^1\pi_*{\mc O}_N$, and hence also its exterior powers, are locally free, the isomorphism defined over $W$ extends to the whole $|C|$ and we have proved the claim.

For Claim II, we argue as follows. Let $\mc I$ denote the ideal sheaf of $|C|$ in $|D|$. Recall that $ \mc I/ \mc I^2\cong \oplus_{i=1}^g {\mc O}_{\P^{g-1}}(-1)$ and that the short exact sequence (on which the involution $\iota^*$ acts)
\[
0 \to \mc I/ \mc I^2 \to (\Omega^1_{|D|})\res{|C|} \to \Omega^1_{|C|} \to 0
\]
is split. The sheaf $ \mc I/ \mc I^2$ is the $\iota^*$--anti--invariant part of  $(\Omega^1_{|D|})\res{|C|}$ and the sheaf $ \Omega^1_{|C|}$ is the $\iota^*$--invariant part. By Theorem 1.3 of \cite{Matsushita05}, there is an isomorphism
\be \label{matsushita iso}
\Omega^1_{|D|} \cong R^1\pi_* {\mc O}_M.
\ee
Since this isomorphism is induced by the symplectic form $\sigma$ of $M$, it interchanges the invariant and anti-invariant subbundles of $\Omega^1_{|D|} \res{|C|}$ and of $R^1\pi_* {\mc O}_M \res{|C|}$. In particular,  the composition of the inclusion $\mc I/ \mc I^2 \to (\Omega^1_{|D|})\res{|C|}  \cong R^1\pi_* {\mc O}_M\res{|C|}$ with the natural morphism $R^1\pi_* {\mc O}_M \res{|C|} \to R^1\pi_* {\mc O}_Y$ is non-zero and generically surjective. We need to show that it is an isomorphism. Consider a general line $\ell \subset |C|$ and let $p': \mc C' \to \ell$ be the restriction of the family of curves to $\ell$, so that $\mc C'$ is just the blow up of $T$ at the $2g-2$ base points of the pencil. By base change (over $W$ all families in question are  flat, so we can apply base change) and by Claim I there is an isomorphism
\[
R^1\nu_* {\mc O_N} \res{\ell}=R^1\pi_* {\mc O_Y}\res{\ell}\cong R^1p'_* {\mc O_{\mc C'}}.
\]
Since $R^1p'_* {\mc O_{\mc C'}}$ is locally free of rank $g$, we can write
\[
R^1p'_* {\mc O_{\mc C'}}=\oplus_{i=1}^g \mc O_{\P^1}(a_i), \quad \text{ for some } a_i \in \Z.
\]
Since the base is one-dimensional, the Leray spectral sequence degenerates and we can use the Hodge numbers of $\mc C'$ to calculate the $a_i$'s. From
\be
\begin{aligned}
&0=H^1(\mc C', {\mc O}) \cong H^1(\P^1, {\mc O})\oplus H^0(\P^1, R^1 {p}_*{\mc O}_{\mc C'}), \,\,\,\,\,\text{and}\\
&0=H^2(\mc C', {\mc O})\cong H^2(\P^1, {\mc O})\oplus H^1(\P^1, R^1 {p}_*{\mc O}_{\mc C'})\oplus H^0(\P^1, R^2 {p}_*{\mc O}_{\mc C'}),
\end{aligned}
\ee
we deduce that
\[
a_i < 0, \,\,\,\,\text{and }\,\,\,\,\,\,a_i>-2.
\]
We conclude that $a_i=-1$ for every $i$, so that $R^1p'_* {\mc O_{\mc C'}}=\oplus_{i=1}^g \mc O_{\P^1}(-1)$. It follows that the morphism
\[
\mc I/ \mc I^2 \to R^1\pi_* {\mc O}_Y
\]
defined above is an isomorphism over an open subset whose complement has codimension greater or equal to two. Hence it extends to a global isomorphism and Claim II follows using (\ref{N and Y}).

\end{proof}

\begin{rem} Analogously to the cases of ${\mc O}_N$ and ${\mc O}_Y$, one can show that also the higher direct images of ${\mc O}_N$ and of ${\mc O}_{\Nt}$ are isomorphic, so that
 \[
 H^p(\Nt, {\mc O}_{\Nt})\cong \left\{\begin{aligned}
 &\C \,\,\,\, \text{if} \,\,\,p=0, 2g-1,\\
 & 0 \,\,\,\,\, \text{otherwise},
\end{aligned}
\right.
 \] 
 and $\Nt$ is an irreducible Calabi-Yau manifold.
\end{rem}

\begin{prop} \label{degenerations}
Let $ \mc C \to B$ be a projective family of smooth genus $g$ curves parametrized by a smooth projective curve (or a disc), and let $p:  \ov{ \mc C} \to \ov{ B}$ be a smooth compactification of the family such that for every point $a_i \in \ov{ B} \setminus B$ the curve $\ov {\mc C}_{a_i}=p^{-1}(a_i)$ is reduced and nodal. Let
$q: \ov{\mc J} \to \ov{B}$ be a relative compactified Jacobian of the family $ \ov{ \mc C}$ and suppose that it is smooth.
 %and let $r: \ov{\mc K} \to \ov{B}$ be a fiber wise \'etale cover over of $\ov {\mc J}$.
There is a natural isomorphism
\be 
R^i q_* \mc O_{\ov{\mc J}} \cong \wedge^i R^1 p_* \mc O_{\ov{\mc C}},
% \quad \text{ and } \quad R^i r_* \mc O_{\ov{\mc J}} \cong R^i q_* \mc O_{\ov{\mc K}}.
\ee
\end{prop}

The proposition will use the following lemma and some results about Hodge bundles and their degenerations which we  recall in the next subsection.

\begin{lemma} \label{higher direct images different degree}
Let $q: \mc C \to B$ be a family of smooth curves and let and $\nu: J^d_{\mc C} \to B$ be the degree $d$ relative compactified Jacobian. For every $i$ there is a natural morphism
\[
R^i \nu_* \Q_{J^d_{\mc C}} \to R^i q_* \Q_{\mc C},
\]
which is an isomorphism for $i=1$. Moreover, the same holds for the higher direct images of the structure sheaves.
\end{lemma}
\begin{proof}
Suppose $\mc C \to B$ has a section $s$. Then as in (\ref{Abel Jacobi}), we can consider an Abel--Jacobi map $A_s: \mc C \to J^d_{\mc C}$ whose pull--back $A_s^*$ induces a morphism between the local systems. Though the map itself depends on the section $s$, the morphism $A_s^*$ does not, since translation by a point on an abelian variety induces the identity in cohomology. It follows that even if $q$ does not have a section we can choose local sections to define local morphisms which, since they are independent of the section, can be glued to define a global morphism. The same argument can be applied to the direct images of the structure sheaf.
\end{proof}

In Section \ref{second betti number}, there will be a more refined version of this Lemma (Proposition \ref{J and J' MHS}).

\subsection{Degeneration of Hodge bundles} We follow \cite{Zucker}, \cite{Katz71}, \cite{Kollar86},  \cite{Steenbrink77}, and \cite{Peters-Steenbrink} for which we refer for more details and complete proofs.

Let $B$ be a smooth curve and let $f:  Z \to B$ be a smooth projective morphism. The degree $i$th cohomology of this family determines a degree $i$ variation of Hodge structures (VHS for short)  on $B$ whose underlying local system is $R^i f_* \C$. By \cite[I.2.28]{Deligne70} the locally free sheaf 
\begin{equation}
\h^i:=R^i f_* \C \otimes {\mc O}_B
\ee
is isomorphic to the hypercohomology sheaf
\begin{equation}
R^if_* \Omega^\bullet_{Z|B},
\ee
where $\Omega^\bullet_{Z|B}$ is the complex of relative differentials of the family. We denote by 
\begin{equation} \label{Gauss-Maninn}
\nabla: \h^i \to  \h^i\otimes\Omega^{1}_{B}
\ee
the Gauss-Manin connection associated to $R^i f_* \C$. Consider the so called  \emph{filtration b\^ete}
\begin{equation} \label{filtration bete1}
\F^p \Omega^{\bullet}_{Z|B}:=\Omega^{\bullet\ge p}_{Z|B},
\ee
of the complex of relative differentials.
The spectral sequence in hypercohomology associated to this filtration has $E_1^{p,q}$ term equal to
\begin{equation} \label{E2}
R^qf_* \Omega^p_{Z|B},
\ee
and abuts to (the associated graded pieces of)
\begin{equation} \label{hi}
R^{i}f_* \Omega^\bullet_{Z|B}= \h^{i}.
\ee
Since $f$ is smooth these sheaves are locally free and, just as in the case of a smooth projective variety (see for example  \cite[Prop. 10.29]{Peters-Steenbrink}), one can show that the spectral sequence degenerates at $E_1$ and that the maps
\begin{equation}
R^if_* \Omega^{\bullet\ge p}_{Z|B} \to R^if_* \Omega^{\bullet}_{Z|B} 
\ee
are injective. In other words, the filtration induced on $\h^{i}$ is
\begin{equation}\label{filtration}
\F^{p}\h^i=R^if_* \Omega^{\bullet\ge p}_{Z|B}\subset \h^i,
\ee
and its associated graded pieces are the sheaves
\[
R^qf_* \Omega^p_{Z|B}, \quad \text{with}\quad p+q=i.
\]
The filtration (\ref{filtration}) is precisely the Hodge filtration of the variation of Hodge structures of the family $Z \to B$.
Now consider a smooth projective compactification 
\[
f\colon \Zb \to \Bb,
\]
and suppose that $D:=f^{-1}(\Bb \setminus B)$ is a reduced divisor with normal crossing. Let $\Omega_{\Zb|\Bb}^\bullet(\log D)$ be the complex of relative logarithmic differentials (\cite[(21)]{Zucker}).

By a classical theorem \cite[II.7.9]{Deligne70}, it is known that $(\h^i, \nabla)$ is an algebraic differential equation with regular singular points. By definition \cite[(2.1) (i)]{Kollar86}, this means that $\nabla$ has logarithmic poles at every point $b$ in $\Bb \setminus B$,  that is to say, there exists a vector bundle extension $\hb^i$ of $\h^i$ to all of $\Bb$, such that the connection $\nabla$ extends to a morphism
\[
\overline \nabla: \hb^i \to \hb^i \otimes \Omega_{\Bb}^1(\log A ), \quad A:=\ov B \setminus B.
\]
Such an extension is not unique, but as we will see there is a unique one satisfying some additional conditions. Recall \cite[(2.1) (iii)]{Kollar86} that the \emph{residue} $\Res_b (\overline \nabla)$ of $\overline \nabla$ at a point $b$ in $\Bb \setminus B$ is defined to be the endomorphism of the fiber $\hb^i_b$ induced by restricting $\overline \nabla$ to $b$ and composing this restriction with $\id_{\hb^i_b}\otimes \Res$ where $\Res$ is the Poincer\'e residue map $ \Omega_{\Bb}^1(\log b) \to \mathbb C_b $
\[
\Res_b (\overline \nabla): \hb^i_b \to   \hb^i_b \otimes \Omega_{\Bb}^1(\log b) \to \hb^i_b \otimes \C_b.
\] 
In other words, if we fix a local trivialization of $\hb^i$ around $b$ and if $z$ is a local coordinate on $\Bb$, the residue is defined by the equation $\overline \nabla=dz\otimes (\Res_b (\overline \nabla) 1/z + \dots)$. Another important property of the residue \cite[Lem. 2.2]{Kollar86} is that if $T$ denotes the local monodromy operator, then
\be \label{monodromy and residue}
T=\exp(-2\pi i \Res_b(\overline \nabla)).
\ee

By \cite[(2.11)]{Steenbrink77} and \cite{Steenbrink_Zucker80} ((5.1) for the notation and (5.3) for the result; for a clear exposition see \cite{Zucker}), the sheaves 
$$R^if_* \Omega^\bullet_{\Zb|\Bb}(\log D)$$
and
$$R^qf_* \Omega^p_{\Zb|\Bb}(\log D), \quad \,\,\,p+q=i,$$
are \emph{locally free} extensions of (\ref{hi}) and (\ref{E2}), respectively. Moreover, Katz proved in \cite[V]{Katz71} (see also \cite[Thm. 10.28]{Peters-Steenbrink}) that there is a natural morphism
\be \label{extending nabla}
\overline \nabla: R^if_* \Omega^\bullet_{Z|B}(\log D) \to R^if_* \Omega^\bullet_{Z|B}(\log D) \otimes\Omega_{\Bb}^1(\log \Bb \setminus B),
\ee
which extends the Gauss-Manin connection.
It follows that we can set
\[
\hb^i:= R^if_* \Omega^\bullet_{Z|B}(\log D).
\]
On the other hand, the filtration bete (\ref{filtration bete1}) extends to a filtration
\begin{equation} \label{filtration bete2}
\F^p \Omega^{\bullet}_{\Zb|\Bb}(\log D):=\Omega^{\bullet\ge p}_{\Zb|\Bb}(\log D),
\ee
of $R^if_* \Omega^\bullet_{Z|B}(\log D)$. This defines a spectral sequence whose $E_1^{p,q}$ terms are 
\[
R^q f_* \Omega^p_{\Zb|\Bb}(\log D).
\]
Since these sheaves are locally free and the differential is generically zero, the differential is identically zero. Hence the spectral sequence, which abuts to the associated graded pieces of $R^if_* \Omega^\bullet_{Z|B}(\log D)$, degenerates at $E_1$. Moreover, these graded pieces $R^qf_* \Omega^p_{\Zb|\Bb}(\log D)$ are locally free and hence the extensions
\[
\F^p(\overline\h^i):= R^{i} f_* \Omega^{\ge p}_{\Xb|\Bb}(\log D)
\]
of the sheaves $\F^p(\h^i)$ are actually extensions as vector \emph{sub-bundles} of $\overline \h^i$ (this is a particular case of Schmid's Nilpotent Orbit Theorem). The last ingredient is the following classical theorem

\begin{thm}[Manin, \cite{Deligne70}, Prop. 5.4]\label{deligne} Let $(\h, \nabla)$ be an algebraic differential equation with regular singular points on the pointed disc $\Delta^*$. Then $(\h, \nabla)$ admits a \emph{unique} locally free extension $(\overline \h, \overline \nabla)$ to the disc $\Delta$ satisfying the following two properties
\begin{enumerate}
\item $\overline \nabla: \overline \h \to \overline \h \otimes \Omega^1_{\Delta}(\log 0)$ has logarithmic poles;
\item The  eigenvalues $\lambda$ of $\Res_0 (\overline \nabla) \in \End (\overline \h_0)$ satisfy $0\le \re (\lambda)<1$.
\end{enumerate}
\end{thm}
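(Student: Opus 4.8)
The plan is to discard all the analytic data except the monodromy and to build the extension by hand from it. Since $\nabla$ is flat on the pointed disc $\Delta^*$, its horizontal sections form a rank-$n$ local system, and because $\Delta^*$ is homotopy equivalent to a circle this local system is completely encoded by a single monodromy operator $T\in\mathrm{GL}_n(\C)$, the automorphism of a reference fibre obtained by transporting horizontal sections once around $0$. The regular-singularity hypothesis will be used only through the classical moderate-growth characterization of regular singular points: multivalued horizontal sections grow at most polynomially as $z\to 0$. This is exactly what will guarantee that the frame constructed below extends holomorphically across the puncture and that comparison matrices are meromorphic. So first I would fix $T$ and work with it alone.

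The heart of the matter is the choice of a normalized logarithm of $T$, which will serve as the residue. I would write the multiplicative Jordan decomposition $T=T_s T_u$, with $T_s$ semisimple and $T_u$ unipotent. For each eigenvalue $\mu$ of $T_s$ there is a unique $\lambda\in\C$ with $\exp(-2\pi i\lambda)=\mu$ and $0\le\re(\lambda)<1$; assigning $\lambda$ to the corresponding eigenspace defines a semisimple part $\Gamma_s$, and the unipotent factor contributes the nilpotent endomorphism $\Gamma_n=\tfrac{-1}{2\pi i}\log T_u$, a finite sum because $T_u-\id$ is nilpotent. Setting $\Gamma=\Gamma_s+\Gamma_n$ produces an endomorphism with $\exp(-2\pi i\Gamma)=T$ whose eigenvalues all have real part in $[0,1)$, and the strip normalization makes $\Gamma$ the \emph{unique} such endomorphism. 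This is the step in which both existence and the rigidity needed for uniqueness are built in.

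For existence I would then take a multivalued horizontal frame $s=(s_1,\dots,s_n)$ over $\Delta^*$ and set $e:=s\,z^{\Gamma}$, where $z^{\Gamma}:=\exp\!\big((\log z)\Gamma\big)$. Going once around $0$ multiplies $s$ by $T$ and $z^{\Gamma}$ by $\exp(2\pi i\Gamma)=T^{-1}$; since $T=\exp(-2\pi i\Gamma)$ commutes with $z^{\Gamma}$, these cancel and $e$ is single valued. By the moderate-growth estimate $e$ is a holomorphic frame across $0$, and I take $\hb$ to be the sheaf it generates. Using $\tfrac{d}{dz}z^{\Gamma}=\Gamma z^{\Gamma}/z$ and $\nabla s=0$ one computes
\[
\overline\nabla e=e\cdot\Gamma\,\frac{dz}{z},
\]
so $\overline\nabla$ has a logarithmic pole with $\Res_0(\overline\nabla)=\Gamma$, whose eigenvalues satisfy $0\le\re(\lambda)<1$. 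This establishes existence together with properties (1) and (2).

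Uniqueness is where the real work lies. Given two extensions $(\hb_1,\overline\nabla_1)$ and $(\hb_2,\overline\nabla_2)$ satisfying (1) and (2), both have underlying monodromy $T$, so by uniqueness of the normalized logarithm their residues are conjugate to the single operator $\Gamma$; after choosing frames I may assume both connection matrices equal $\Gamma\,dz/z$. Because both connections have logarithmic poles, horizontal sections have moderate growth in each frame, so the change-of-frame matrix $P\in\mathrm{GL}_n(\mathcal O(\Delta^*))$ is meromorphic at $0$, and the gauge-change formula forces $z\,\tfrac{dP}{dz}=\Gamma P-P\Gamma$. This is again a regular-singular system whose residue is $\mathrm{ad}_\Gamma\colon X\mapsto\Gamma X-X\Gamma$, with eigenvalues the differences $\lambda_i-\lambda_j$; since each $\lambda_i$ has real part in $[0,1)$, these have real part in $(-1,1)$ and so include no nonzero integer. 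Expanding $P=\sum_k P_k z^k$ and matching coefficients yields $(\mathrm{ad}_\Gamma-k)(P_k)=0$, whence $P_k=0$ for every $k\ne 0$; thus $P$ is holomorphic, and the same argument for $P^{-1}$ gives $P\in\mathrm{GL}_n(\mathcal O(\Delta))$, so the two extensions coincide. I expect this indicial/recursion step — and, before it, the passage from ``regular singular'' to the moderate-growth bound that makes $e$ extend and $P$ meromorphic — to be the main obstacle; everything else is formal once the normalized residue $\Gamma$ is in hand.
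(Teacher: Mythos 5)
The paper does not prove this statement at all: it is quoted verbatim from the cited source (\cite{Deligne70}, Prop.~5.4, attributed to Manin) and used as a black box in the proof of Proposition \ref{hodge bundles}. So the only meaningful comparison is with the classical argument, and yours is essentially that argument: normalized logarithm $\Gamma$ of the monodromy $T$ with spectrum in the strip $0\le\re(\lambda)<1$, the twisted frame $e=s\,z^{\Gamma}$ to construct the canonical extension, and an indicial recursion exploiting non-resonance (no eigenvalue difference of $\mathrm{ad}_\Gamma$ is a nonzero integer) for uniqueness. The existence half and the overall architecture are correct, including the sign bookkeeping ($T=\exp(-2\pi i\Gamma)$ cancelling against the monodromy of $z^{\Gamma}$, and $\overline\nabla e=e\,\Gamma\,dz/z$). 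One small conceptual correction on existence: no growth estimate is needed there. Once $e$ is single-valued you may simply define $\hb$ as the $\mathcal O_\Delta$-submodule of $j_*\h$ generated by $e_1,\dots,e_n$; it is automatically locally free with frame $e$ and restricts to $\h$ on $\Delta^*$. Moderate growth (i.e.\ regularity) enters only when comparing two extensions, or an extension with a pre-given algebraic/meromorphic structure --- which is exactly where you invoke it, correctly, to get meromorphy of $P$.

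The one genuinely compressed step is in uniqueness: ``after choosing frames I may assume both connection matrices equal $\Gamma\,dz/z$.'' Knowing that the residues are conjugate to $\Gamma$ does \emph{not} let you choose frames making the full connection matrices constant; that reduction is the classical non-resonant normalization theorem (Levelt/Wasow, or \cite{Deligne70} II), and it is proved by precisely the power-series recursion you then run --- so as written the argument is quietly circular at this point. Even the preliminary claim that the residue of an arbitrary extension satisfying (1)--(2) has the same spectrum as $\Gamma$ leans on the same lemma, since a priori hypothesis (2) only places the residue eigenvalues in the strip. The fix is cheap and keeps your proof self-contained: run the recursion directly on the non-constant gauge equation $z\,\tfrac{dP}{dz}=A_2(z)P-PA_1(z)$, where $A_i(z)\,dz/z$ are the two holomorphic logarithmic connection matrices with $A_i(0)$ having spectrum in the strip. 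If $P=\sum_{k\ge -N}P_kz^k$ is the meromorphic comparison matrix, the lowest-order coefficient satisfies $(L-k_0)P_{k_0}=0$ with $L(X)=A_2(0)X-XA_1(0)$, whose eigenvalues have real part in $(-1,1)$; so $L-k_0$ is invertible for every nonzero integer $k_0$, and induction from the bottom kills all negative coefficients. Applying the same to $P^{-1}$ gives $P\in\mathrm{GL}_n(\mathcal O(\Delta))$, which is your conclusion, now without assuming the constant normal form. With that reorganization the proof is complete and is the standard one for the theorem the paper cites.
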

An extension of an algebraic differential equation $\h$ as in the Theorem is called the \emph{canonical extension}. For example, if $(\h, \nabla)=(\mc{O}_{\Delta^*}, d)$, then the trivial extension $(\ov \h=\mc{O}_{\Delta}, \ov \nabla=d)$ has no poles and is the canonical extension.

\begin{thm}[\cite{Katz71}, VII]  \label{Katz monodromy theorem}
Let $\Zb \to \Bb$ be as above, where $D=\Zb \setminus Z$ a reduced divisor with normal crossing. Then for every $b \in \overline B \setminus B$, the extension $(R^if_* \Omega^\bullet_{Z|B}(\log D), \overline \nabla)$, with $\overline \nabla$ as in (\ref{extending nabla}), of $(R^if_* \Omega^\bullet_{Z|B},  \nabla)$  satisfies the assumptions of Theorem \ref{deligne} with the eigenvalues of the residue $\Res_b (\overline \nabla)$ equal to zero. In particular, $R^if_* \Omega^\bullet_{Z|B}(\log D)$ is the canonical extension of $R^if_* \Omega^\bullet_{Z|B}$.
\end{thm}

\begin{cor} \cite[Cor. 11.18]{Peters-Steenbrink} \label{monodromy unipotent}
The monodromy of a one parameter family degenerating to a reduced normal crossing central fiber is unipotent.
\end{cor}
\begin{proof}
This follows immediately from (\ref{monodromy and residue}).
\end{proof}

Under this circumstance we say by abuse of notation that $R^{i} f_* \Omega^{\ge p}_{\Zb|\Bb}(\log D)$ and $R^qf_* \Omega^p_{\Zb|\Bb}(\log D)$  are the \emph{canonical extensions} of $R^if_* \Omega^{\bullet\ge p}_{Z|B}$ and $R^qf_* \Omega^p_{Z|B}$, respectively.
Notice that we can say ``canonical extension'' also for $\F^p(\overline\h^i)$ and for $\Gr^p(\h^i)$ because such extension are uniquely determined by the canonical extension. Indeed, let $j: B \to \Bb$ be the open immersion, let $\h$ be a vector bundle on $B$, and let $E\subset \h$ be a sub-bundle. Suppose we are given a vector bundle extension $\hb$ of $\h$ on the whole of $\Bb$. Any extension of $E$ to $\Bb$ as a sub-bundle of $\hb$ is always contained in the saturation of $\hb \cap j_*E$ in $\hb$, and thus has to be isomorphic to the saturation itself. In particular, since the extension of $E$ as a vector sub-bundle of $\hb$ is unique, so is the extension of the quotient $\h/E$.

We now go back to our situation applying these remarks to the families $\ov{\mc C} \to \Bb$ and $\ov{\mc J}\to \Bb$.

\begin{proof}[Proof of Proposition \ref{degenerations}]
Our aim is to prove that the natural isomorphism $R^i q_* \mc O_{\ov{\mc J}} \res{B} \cong \wedge^i R^1 p_* \mc O_{\ov{\mc C}}\res{B}$ extends over $\Bb$. 
As should be clear by now, we will show this by using the canonical extensions of the VHS associated to the families $p:  \ov{ \mc C} \to \ov{ B}$  and $q: \ov{\mc J} \to \ov{B}$. Indeed by Proposition \ref{MRV on compactified Jac} they both have singular fibers that are reduced and  normal crossing  so that we can apply the theory of degeneration of Hodge bundles. Set
\[
A= \ov B \setminus B, \quad \text{ and } \quad  \ov{\mc C}_A=p^{-1}(A), \,\, \ov{\mc J}_A=q^{-1}(A).
\]
The sheaves
\[
\begin{aligned}
\overline \h_{\ov{\mc J}}^1&= R^1 q_* \Omega^{\bullet}_{\ov{\mc J}|\Bb}(\log ( \ov{\mc J}_A))\\
\overline \h_{\ov{\mc C}}^1&= R^1 p_* \Omega^{\bullet}_{{\ov{\mc J}}|B}(\log ({\ov{\mc C}}_A))
\end{aligned}
\]
both extend $R^1q_* \C\otimes {\mc O}_B\cong {R^1p_*\C\otimes {\mc O}_B}$,  and by Theorem \ref{Katz monodromy theorem} they are both isomorphic to the canonical extension. Hence, there is an isomorphism
\be \label{hX e hC}
\overline \h_{\ov{\mc J}}^1 \cong \overline \h_{\ov{\mc C}}^1,
\ee
which extends the existing isomorphism over $B$.  As a consequence, we  alsoget an isomorphism of the canonical extension of
$\F^1(\h_{\ov{\mc J}}^1)\cong  \F^1(\h_{\ov{\mc C}}^1)$, i.e.,
\[
\F^1(\overline \h_{\ov{\mc J}}^1)\cong \F^1(\overline \h_{\ov{\mc C}}^1),
\]
which in turn implies that there is an isomorphism of the first graded pieces
\[
\Gr^0(\overline \h_{\ov{\mc J}}^1)\cong \Gr^0(\overline \h_{\ov{\mc C}}^1),
\]
i.e., an isomorphism
\[
R^1q_* {\mc O}_{\ov{\mc J}}\cong R^1p_*{\mc O}_{\ov{\mc C}}.
\]

We now want to prove that
\[
R^gq_* {\mc O}_{\ov{\mc J}}\cong \wedge^g R^1p_*{\mc O}_{\ov{\mc J}}.
\]
A connection $\nabla$ on a vector bundle $\h$ naturally induces connections, denoted by $\nabla_j$, on all the exterior powers $\wedge^j \h$ of $\h$ by setting $\nabla_j(h_{i_1} \wedge \dots \wedge h_{i_j})= \sum \pm h_{i_1} \wedge \dots \wedge \nabla(h_{i_k}) \wedge \dots \wedge h_{i_j}.$ By construction, if $(\h, \nabla)$ admits an extension $(\overline \h, \overline \nabla)$ with only logarithmic poles, then so does $(\wedge^j  \h, \nabla_j)$ as we can set $\overline {( \wedge^j \h)}=\wedge^j \overline \h$ with the obvious definition for $\overline \nabla_j$. This also shows that if the eigenvalues of the residue of $\overline \nabla$ are zero, then the same is true for the eigenvalues of $\Res(\overline \nabla_j)$ (indeed, both operator will  be nilpotent). In particular, the sheaf $\wedge^j \overline \h$ is the canonical extension of $\wedge^j  \h$.

Since $\ov{\mc J} \to \Bb$ and $\ov{\mc C} \to \Bb $ have normal crossing boundary, by Theorem \ref{Katz monodromy theorem}, by the discussion above, and by uniqueness of the canonical extension we find  
\be
\wedge^i \, \overline \h^1_{\ov{\mc J}}  \cong \wedge^i \, \overline \h^1_{\ov{\mc C}}.
\ee

%%${R^g \nu_* {\mc O}_{N}}\cong \wedge^g {R^1p'_* {\mc O}_{\mc C'}}$ extending that of (\ref{iso over U}). As should be clear by now, we want to prove this isomorphism by showing that the sheaves in question are both isomorphic to canonical extensions.\\
%%Recall that by the choice of the pencil $\P^1\subset |C|$, the families $\mc C' \to \P^1$ and $N' \to \P^1$ have singular fibers that are reduced and have normal crossing.
%%The sheaves
%%\be
%%\begin{aligned}
%%\overline \h_{N'}^1&= R^1 \nu_* \Omega^{\bullet}_{N'|\P^1}(\log (N'_{\Delta'}))\\
%%\overline \h_\mc {C'}^1&= R^1 p_* \Omega^{\bullet}_{\mc C'|\P^1}(\log (\mc C'_{\Delta'}))
%%\end{aligned}
%%\ee 
%%both extend ${R^1\nu_* \C\otimes {\mc O}_{N'}}_{|V}\cong {R^1p_*\C\otimes {\mc O}_{\mc C'}}_{|V}$,  and are isomorphic to the canonical extension. 
%Hence, there is an isomorphism
%\be \label{hX e hC}
%\overline \h_{N'}^1 \cong \overline \h_\mc {C'}^1,
%\ee
%which extends the isomorphism over $V$.

%%% As a consequence, we get an isomorphism of the canonical extension of
%%%$\F^1(\h_{N'}^1)\cong  \F^1(\h_{\mc C}^1)$, i.e.,
%%%\[
%%%\F^1(\overline \h_{N'}^1)\cong \F^1(\overline \h_{\mc {C'}}^1),
%%%\]
%%%which in turn implies that there is an isomorphism of the first graded pieces
%%%\be
%%%\Gr^0(\overline \h_{N'}^1)\cong \Gr^0(\overline \h_{\mc {C'}}^1),
%%%\ee
%%%i.e., an isomorphism
%%%\be
%%%R^1\nu_* {\mc O}_{N'}\cong R^1p_*{\mc O}_{\mc {C'}}.
%%%\ee

%%We deduce that for any $i$,  
%%\be
%%\wedge^i \, \overline \h^1_{N'}  \cong \wedge^i \, \overline \h^1_\mc C.
%%\ee

 Moreover, since $\mc J \to B$ is a family of abelian varieties, we have a natural isomorphism of VHS $\wedge^j \h^1_{\mc J} \cong \h^g_{\mc J}$. Notice that the Hodge filtration on $\h^g_{\mc J}$ is simpy the exterior power of the filtration on $\h^1_{\mc J}$. Using again the result of Katz, we know that
 \[
\overline \h^g_{\ov{\mc J}}:=R^g q_* \Omega^{\bullet}_{\ov{\mc J}}(\log \ov{\mc J}_A)
\]
is also the canonical extension. It follows that,
\[
\wedge^g \overline \h^1_{\ov{\mc C}}  \cong \overline \h^g_{\ov{\mc J}}.
\]
This induces an isomorphism of the respective Hodge filtrations and thus also of the  respective graded pieces.
Since $\Gr^0(\wedge^g\h^1_{\mc {J}})\cong \wedge^g \Gr^0(\h^1_\mc J)$, we conclude
\[
R^gq_* {\mc O}_{\ov {\mc {J}}}\cong \wedge^g R^1p_*{\mc O}_{\ov{\mc {C}}},
\]
and the proposition is proved.
\end{proof}

The referee pointed out that this proposition can also be proved using  \cite[Cor B]{MRVFMAI}.

\section{The second Betti number} \label{second betti number}

This section is devoted to calculating the second Betti number of the relative compactified Jacobian $N$ (assumptions as in Theorem \ref{thm smooth}). We will assume that $|C|$ satisfies Assumption \ref{assumption} (recall that this assumption holds for primitive linear systems, in particular, it holds also for hyperelliptic linear systems). The strategy is to compare this cohomology group with that of the universal family of curves in the linear system and, in fact, we will prove that the two groups have the same dimension. In the whole section, unless otherwise stated, cohomology should be understood with complex coefficients.

As usual, let $\chi$ be a non-zero integer, consider the Mukai vector
\[
w=(0, [C], \chi),
\]
and assume that $v=(0, D, 2\chi)$ is primitive. Let $A$ be an ample line bundle on $T$  such that $H=f^*A$ is $v$-generic and set
\[
N=N_{w,A}, \quad \text{ and } Y=\Phi(N) \subset M=M_{v,H}.
\]

%We first state and prove the main theorem of the section in the non-hyperelliptic case.

\begin{thm} \label{betti N}
Let $|C|$ be a linear system of genus $g  \ge 3$ on a general Enriques surface $T$, and let $N$ be as above. Suppose that $|C|$ satisfies Assumption \ref{assumption}, then
\[
h^2(N)=11.
\]
\end{thm}

The case when $|C|$ has genus $2$ is done at the end of this Section. We also recall that thanks to Prop. 4.4. of \cite{yoshioka-enriques}, Assumption \ref{assumption} is satisfied in many cases, for example in low genus and for primitive linear systems.

The proof of the Theorem uses the long exact sequence in cohomology associated to the pair $(N, N_U)$, as done in \cite{Rapagnetta} (for a linear system of curves on a K3 surface), and then relies on the comparison of the local systems associated to the family of curves and its relative compactified Jacobian.

As usual, we denote by $\mc C \subset |C| \times T$ the universal family of curves. Consider the second projection
\[ \label{C to T}
p\colon \mc C \to T,
\]
which is a fibration in $\mathbb P^{g-2}$'s outside of the base locus of $|C|$, over which the fiber is isomorphic to $\mathbb P^{g-1}$. Using this we easily see that 
\be \label{b2 C}
H^1(\mc C)=0, \quad \text{ and  that } \quad H^2(\mc C)= \left\{\begin{aligned}
 &\C^{11} \,\,\,\, \text{if} \,\,\,|C| \text{ not hyperellptic}\\
 & \C^{13} \,\,\,\, \text{if} \,\,\,|C| \text{ is hyperellptic}
\end{aligned}
\right.
\ee

%%%Before proceeding we need the following general remark.
%%%
%%%\begin{lemma} \label{higher direct images different degree}
%%%Let $q: \mc C \to B$ be a family of smooth curves and let and $\nu: J^d_{\mc C} \to B$ be the degree $d$ relative compactified Jacobian. For every $i$ there is a natural morphism
%%%\[
%%%R^i \nu_* \Q_{J^d_{\mc C}} \to R^i q_* \Q_{\mc C},
%%%\]
%%%which is an isomorphism for $i=1$. Moreover, the same holds for the higher direct images of the structure sheaves.
%%%\end{lemma}
%%%\begin{proof}
%%%Suppose $\mc C \to B$ has a section $s$. Then as in (\ref{Abel Jacobi}), we can consider an Abel--Jacobi map $A_s: \mc C \to J^d_{\mc C}$ whose pull--back $A_s^*$ induces a morphism between the local systems. Though the map itself depends on the section $s$, the morphism $A_s^*$ does not, since translation by a point on an abelian variety induces the identity in cohomology. It follows that even if $q$ does not have a section we can choose local sections to define local morphisms which, since they are independent of the section, can be glued to define a global morphism. The same argument can be applied to the direct images of the structure sheaf.
%%%\end{proof}

Recall that $U \subset |C|$ denotes the locus parametrizing smooth curves. Notice that if $|C|$ is hyperelliptic then $U$ is strictly contained in the locus $U'$ parametrizing smooth fibers of $\nu: N \to |C|$, since the general fiber of $\nu$ over the two components $\Delta_1$ and $\Delta_2$ is smooth.

\begin{lemma} \label{H1} Let $|C|$ be a genus $g \ge 2$ linear system on a general Enriques surface $T$, and let $k$ be the number of irreducible components of $\Delta$. Then $H^1(\mc C_U)\cong H^1(N_U)\cong H^1(U)=\C^{k-1}$.
\end{lemma}

\begin{proof}
The equality $H^1(U)=\C^{k-1}$ is well--known, see for example \cite[Prop. 1.3]{Dimca}. Since $q: \mc C_U \to U$ is a smooth morphism,
\[
H^1(\mc C_U)\cong H^0(U, R^1q_* \C)\oplus H^1(U),
\]
so we only need to show that $H^0(U, R^1q_* \C)=0$. This follows from the invariant cycle theorem \cite{DeligneHodgeII}, which in our setting asserts that
\[
H^0(U, R^1q_* \C)=H^1(C_t)^{\inv}=\im[H^1(\mc C) \to H^1(\mc C_U)],
\]
where $H^1(C_t)^{\inv}$ denotes the monodromy invariant part of the first cohomology of a smooth curve $C_t$. Since $H^1(\mc C)=0$, we are done. To finish the proof we only need to invoke Lemma \ref{higher direct images different degree} which guarantees that 
\[
R^1q_* \C_{\mc C}\res{U}=R^1\nu_* \C_N \res{U}.
\]
\end{proof}

\begin{rem}
By the results of Subsection \ref{linear systems enriques} (cf. Corollary \ref{discriminant locus irr} and Proposition \ref{discriminant locus}), $k=1$ unless either $|C|$ is hyperelliptic, in which case $k=4$, or $|C|$ is of genus $3$ and defines a degree $4$ morphism to $\mathbb P^2$ as in case $(1)$ of Proposition \ref{Verra}, in which case $k=37$.
\end{rem}

We will now consider the long exact sequences in cohomology for the pairs $(N, N_U)$ and $(\mc C, \mc C_U)$. Set
\[
\begin{aligned}
j=\#\text{ of irreducible components of } N_{\Delta},\\
\ell=\#\text{ of irreducible components of } C_{\Delta}.
\end{aligned}
\]
So, for example, if $|C|$ is non--hyperellptic $k=j=\ell$. Indeed, by Proposition \ref{Verra} the general point of every component of the discriminant parametrizes irreducible curves and hence the preimage in $N$ of every component of the discriminant is irreducible. In Corollary \ref{discriminant locus irr} we remarked that in the non--hyperelliptic case $k=1$, unless $|C|$ is of genus $3$ in which case $k=37$. If $|C|$ is hyperelliptic, of genus $g \ge 3$, then by Propositions \ref{discriminant locus} and Corollary \ref{number of components}, $j=5$ and $\ell=7$.

\begin{lemma} \label{H2} There are exact sequences
\be \label{relative cohomology N}
\begin{aligned}
0 \to \C^{j-k+1} \to H^2(N) \to H^2(N_U),\\
0 \to \C^{\ell-k+1} \to H^2(\mc C) \to H^2(\mc C_U).
\end{aligned}
\ee
%%and the two maps $\C \to H^2(N)$ and $\C \to H^2(\mc C)$ are identified with the pullbacks
%%\[
%%\nu^* : H^2(\P^{g-1}) \to H^2(N),  \text{ and }\,\, q^* : H^2(\P^{g-1}) \to H^2(\mc C).
%%\]
\end{lemma}
\begin{proof}
By the long exact sequence in  cohomology of  pairs and by the previous Lemma, we only  need to show that $H^2(N, N_U)=\C^{j}$ and that $H^2(\mc C, \mc C_U)=\C^\ell$. By Poincar\'e-Lefschetz duality (\cite{Spanier} Chapter 6, Section \S 2 Thm 17 or also \cite{Peters-Steenbrink} Thm B.28),
\[
H_i(N, N_U) \cong H^{2n-i}(N_\Delta).
\]
Letting $S(N_\Delta)$ be the singular locus of $N_\Delta$ and setting $i=2$ we get, since the real codimension of $S(N_\Delta)$ in $N_\Delta$ is greater or equal to $2$,
\be \label{relative cohomology}
H^{2n-2}(N_\Delta)\cong H^{2n-2}(N_\Delta, S(N_\Delta)) \cong H_0(N_\Delta\setminus S(N_\Delta))\cong \C^j,
\ee
where the second to last isomorphism is again given by Poincar\'e-Lefschetz duality\footnote{cit. Spanier, Thm 19 plus the fact the the pair $(N_\Delta, S(N_\Delta))$ is taut in $N$.} and the last isomorphism holds because by assumption $N_\Delta$ has $j$ irreducible components. The same argument applies for showing that  $H^2(\mc C, \mc C_U)=\C^\ell$.
\end{proof}

\begin{cor} \label{kernel of N to NU}
The dimension of the kernel of the natural morphism $H^2(N) \to H^2(N_U)$ is equal to $1$ if $|C|$ is not hyperelliptic and equal to $2$ if $|C|$ is hyperelliptic.
\end{cor}

\begin{rem}
This Corollary and the Lemma before are the places where we are using Assumption \ref{assumption}. Clearly, if this assumption does not hold the number of irreducible components of $N_\Delta$ cannot be computed in the same way.
\end{rem}

%%\begin{rem} \label{Y e Nt}
%%The result for $(N, N_U)$  holds also for $(Y, Y_U)$ and for $(\Nt, \Nt_U)$. Indeed, we only have to check that $Y_\Delta$ and $\Nt_\Delta$ have $j$ irreducible components, which follows from Corollary \ref{irreducibility universal cover}  REFERENCE.
%%\end{rem}

Recall that the degree $i$ cohomology groups of a smooth quasi-projective variety $Z$ are endowed with a canonical mixed Hodge structure (MHS for short) \cite{DeligneHodgeII} of weight $\ge i$. Moreover, if $\ov Z$ is a smooth projective compactification of $Z$,  by  Corollary 3.2.17 of \cite{DeligneHodgeII} we have
\[
W_i H^i(Z)=\im [ H^i(\ov Z) \to H^i(Z)]
\]
where $W_i H^i(Z)$ denotes the weight $i$ part of the MHS on $H^i(\ov Z)$.
Applying this to $N_U$ and $\mc C_U$ we find that there are short exact sequence
\[
\begin{aligned}
0 \to \C^{j-k+1} \to H^2(N) \to W_2H^2(N_U) \to 0,\\
0 \to \C^{\ell-k+1} \to H^2(\mc C) \to W_2H^2(\mc C_U) \to 0.
\end{aligned}
\]

The theorem will follow once we prove the following
\begin{prop} \label{H2 MHS}
There is an isomorphism of MHS
\[
H^2(N_U) \cong H^2(\mc C_U)
\]
\end{prop}

\begin{proof}[Proof of Theorem \ref{betti N}]
By (\ref{b2 C}) we see that $W_2H^2(\mc C_U)=W_2(H^2(N_U))$ is equal to $10$ in the non--hyperelliptic case and equal to $9$ in the hyper--elliptic case while by Corollary \ref{kernel of N to NU} see that $j-k+1$ is equal to $1$ in the non--hyperelliptic case and equal to $2$ in the hyperelliptic case. In either case, we see that $\dim H^2(N)=11$.
\end{proof}

The proof of  Proposition \ref{H2 MHS} will take the rest of this section. We start with the following general statement

\begin{prop} \label{J and J' MHS}
Let $p:\mc C \to B$ be a family of smooth connected curves of genus $g$ over a smooth quasi--projective variety $B$. For every pair of integers $d$ and $d' $ let $q: \mc J=\mc J_d \to B$ and $q': \mc J'=\mc J_{d'} \to B$ be the relative Jacobians of degree $d$ and $d'$, respectively. There exists cycles $Z \in CH^g(\mc J \times_B \mc J')_\Q$ and $P \in CH^2(\mc C \times_B \mc J)_\Q$ inducing
\begin{enumerate}
\item natural isomorphisms $[Z]_*: R^k q_* \Q_{\mc J} \cong R^k q'_* \Q_{\mc J'}$ of local systems for every $k$;
\item an isomorphism of MHS $[Z]_*: H^k(\mc J) \cong H^k(\mc J')$ for every $k$.
\item natural morphisms $[P]_*: R^k q_* \Q_{\mc J} \to R^k p_* \Q_{\mc C}$ of local systems for every $k$ (isomorphism for $k=1$);
\item a morphism of MHS $[P]_*: H^k(\mc J) \to H^k(\mc C)$ for every $k$.
\end{enumerate}
Moreover, the morphisms of MHS  in $(2)$ and $(4)$ are compatible with smooth base change and with the Leray filtrations of the two sides.
\end{prop}
\begin{proof} Let $\Sigma \subset \mc C$ be a multisection of the family and let  $\phi: \Sigma \to B$ be the induced morphism. Let $r$ be its degree. If we base change $\mc C$ to $\Sigma$, there is a tautological section and hence all  relative Jacobians are isomorphic. In particular, we can find the graph of an isomorphism $\Gamma \subset (\mc J\times_B {\Sigma}) \times_\Sigma (\mc J'\times_B \Sigma)$. Let $\xi:(\mc J\times_B {\Sigma}) \times_\Sigma (\mc J'\times_B \Sigma) \to \mc J\times_B \mc J'$ be the natural projection and set
\[
Z:= \ff{1}{r} \xi_*\Gamma \in CH^g(\mc J \times_B \mc J')_\Q.
\]
This cycle can be viewed also as an element of $H^0(B, R^{2g} (q,q')_* \Q)=\oplus H^0(R^{2g-k}q_* \Q_{\mc J} \otimes R^k q'_* \Q_{\mc J'})$, where $(q, q'): \mc J\times_B \mc J' \to B$ is the natural morphism. This defines for every $k$ an element in $H^0(B, R^{2g-k}q_* \Q_{\mc J} \otimes R^k q'_* \Q_{\mc J'})= Hom_B(R^k q_* \Q_{\mc J} , R^k q'_* \Q_{\mc J'})$, hence a morphism 
\[
[Z]_*: R^k q_* \Q_{\mc J} \to R^k q'_* \Q_{\mc J'}
\]
which by \cite[Lemma 5.4]{Arapura05}, $[Z]_*$ is the composition
\be \label{lemma arapura}
R^k q_* \Q_{\mc J} \stackrel{p_1^*}{\to} R^{k} (q,q')_* \Q \stackrel{\cup Z}{\to} R^{k+2g} (q,q')_* \Q \stackrel{{p_2}_*}{\to} R^k q'_* \Q_{\mc J'}),
\ee
where $p_1$ and $p_2$ are the first and second projection from $ \mc J\times_B \mc J' $. To show this is an isomorphism let us look at the stalks of $[Z]_*$.
Let $b \in B$ be a point and let $\sigma \in \Sigma$ be such that $\phi(\sigma)=b$.  
By (\ref{lemma arapura}), the stalk at $b$ of the morphism $[Z]_*$ is precisely the correspondence $[Z_b]_*$ induced by the cycle $Z_b=(\ff{1}{r}\xi_*\Gamma)_b$. To understand what this is, recall that by construction, the isomorphism $[\Gamma_\sigma]_*: H^k(\mc J_b) \to H^k(\mc J'_b)$ is the isomorphism in cohomology induced by
\[
\begin{aligned}
\mc J_b &\to \mc J'_b,\\
L &\mapsto L \otimes \mc O_{\mc C_b}((d'-d)\sigma ).
\end{aligned}
\] 
As in Lemma \ref{higher direct images different degree} we can see that $[\Gamma_\sigma]_*$ is independent of  the point $\sigma$ for $\sigma \in \phi^{-1}(b)$. 
Since $[\Gamma_\sigma]_*$ is independent of $\sigma$ this implies that $[Z_b]_*=r[\ff{1}{r}\Gamma_\sigma]_*: H^k(\mc J_b) \to H^k(\mc J'_b)$. In particular, $[Z]_*$ is an isomorphism.

Under the cycle map $CH^g(\mc J \times_B \mc J')_\Q \to H^{BM}_{2\dim B+2g}(\mc J \times_B \mc J') $ we can also view   $Z$ as class in Borel--Moore homology. This defines a map
\[
\begin{aligned}
{[Z]_*}: H^k(\mc J) &\to H^k(\mc J')\\
\alpha & \mapsto {p_2}_*( [Z] \cup p_1^*(\alpha))
\end{aligned}
\]
By compatibility of MHS with cup--product, this is a morphism of MHS (e.g. see \cite[\S 6.3]{Peters-Steenbrink}).  Lemmas 5.2 and 5.3 of \cite{Arapura05} show that the Leray filtration is compatible both with cup product and with pushforward under smooth projective morphisms as is $p_2$. This shows (cf. \cite{Arapura05} pg. 586) that $[Z]_*$ is compatible with the Leray filtrations on both $H^k(\mc J)$ and $H^k(\mc J')$ and hence induces morphisms
\be \label{}
H^i(B, R^j q_* \Q_{\mc J} ) \to H^i(B, R^j q'_* \Q_{\mc J'} ).
\ee
which by $(1)$ are isomorphisms.
This shows that $(2)$ is an isomorphism as well.
%% In fact, $[Z]_*$ can be upgraded to an isomorphism $q_* \Q_{\mc J} \to q'_* \Q_{\mc J'}$ in the derived category  \cite[Lemma 2.15]{Corti et al} meaning that it induces both $(1)$ and $(2)$ when passing to global sections and cohomology. This shows that
 
For the last two statements, we may consider the cycle $P \in CH^2(\mc C \times_B \mc J)_\Q$ obtained by considering the Poincar\'e line bundle on $(\mc C\times_B {\Sigma}) \times _\Sigma (\mc J_r \times_B \Sigma)$,  pushing it forward to $\mc C \times_B \mc J_r$ and then dividing by $r$. With this definition, the proof of $(3)$ and $(4)$, and of compatibility with the Leray filtrations, follows from the general theory as in $(1)$ and $(2)$.
\end{proof}

\begin{proof}[Proof of Proposition \ref{H2 MHS}]
By the last statement of Proposition \ref{J and J' MHS}, there is a morphism of MHS
\[
\phi: H^2(N_U) \to H^2(\mc C_U)
\]
which is compatible with the Leray filtrations of $N_U \to U$ and $\mc C_U \to U$. To show that $\phi$ is an isomorphism, it is sufficient to show that the natural morphism of local systems of Proposition \ref{J and J' MHS} induce isomorphisms
\be \label{iso local systems}
H^i(U, R^j\nu_* \C_N) \cong H^i(U, R^jq_* \C_{\mc C}).
\ee
Since for $j=1$ there is an isomorphism
\[
R^1 \nu_* \C_{N_U}\cong  R^1 p_* \C_{\mc C_U},
\]
we only need to prove (\ref{iso local systems}) for $(i,j)=(0,2)$. By Proposition \ref{J and J' MHS} there is a (non--zero) morphism of local systems  $R^2\nu_* \C_{N_U}) \to R^2 p_* \C_{\mc C_U}$ so we only have to show that it induces an isomorphism at the level of global sections. By the invariant cycle theorem we know that
\[
H^0(U, R^2\nu_* \C_{N_U})=H^2(N_t, \C)^{\inv}, \quad \text{ and } \quad H^0(U, R^2 p_* \C_{\mc C_U})=H^2(C_t, \C)^{\inv}.
\]
where, since $C_U \to U$ is a family of smooth connected curves, we known that $H^0(U, R^2 p_* \C_{\mc C_U})$ is one dimensional. So we only have to prove that $H^0(U, R^2\nu_* \C_{N_U})$, too, is one dimensional. Let 
\[
\rho: \pi_1(U) \to \Aut(H^1(C_t, \C))
\]
be the monodromy representation. By Proposition \ref{irreducible monodromy non hyp} below, this representation is irreducible.
Since $\rho$ preserves the symplectic pairing $(\cdot,\cdot)$, the isomorphism  $w: H^1(C_t, \C) \cong H_1(C_t, \C)$ induced by $(\cdot,\cdot)$ is $\rho$-equivariant. By composing with $w$, it follows that any $\rho$-invariant element of $ \wedge^2H^1(C_t, \C)$ can be thought of as a $\rho$-invariant morphism 
\[
\varphi: H^1(C_t, \C) \to H^1(C_t, \C).
\]
By Schur's Lemma
\[
\varphi=\lambda \id,
\]
for some $\lambda$ in $\C$, and 
\[
( \wedge^2H^1(C_t, \C))^{\inv}\cong \C.
\]
is one--dimensional, generated by the class of the the intersection pairing (viewed as the theta divisor of $N_t$ via the natural isomorphism $\wedge^2H^1(C_t, \C) \cong H^2(N_t, \C)$).
\end{proof}

\begin{prop} \label{irreducible monodromy} \label{irreducible monodromy non hyp}
Let $|C|$ be a linear system of genus $g \ge 2$ on a general Enriques surface $T$, and let $t \in U \subset |C|$ be a point. The monodromy representation
\be
\rho: \pi_1(U) \to \Aut(H^1(C_t, \C)),
\ee
is irreducible.
\end{prop}
\begin{proof} 
Let $N$ be the degree zero relative compactified Jacobian of $|C|$.  By Corollary \ref{generated by vanishing cycles} we know that 
the kernel of the natural morphism
\[
\pi_1(N_t)=H^1(C_t, \Z) \to \pi_1(N) \to 1,
\]
is generated by vanishing cycles. Hence so is $H^1(C_t, \C)$ ( we freely identify $H^1(C_t, \C)$ with $H_1(C_t, \C)$ with the monodromy invariant isomorphism defined by Poincar\'e duality).
If the discriminant locus $\Delta \subset |C|$ is irreducible, then we can conclude using Theorem 3.4 in \cite{Voisin2} which shows that the restriction of the monodromy representation to the subspace generated by the vanishing cycles is irreducible provided that $\Delta$ is irreducible. If not, we argue as follows.
Let
\[
\{c_i\},
\]
be the set of vanishing cycles associated to a set of generators $\{ h_i \}$ of $H=\pi_1(U)$ as in Subsection \ref{section vanishing cycles}.

To prove that there are no invariant subspaces, we argue by contradiction and suppose that there is a non-trivial invariant subspace
\[
F \subset H^1(C_t, \C).
\]
First of all, we check that the intersection pairing $(\cdot, \cdot)$ on $H^1(C_t, \C)$ restricts to a symplectic pairing on $F$. Indeed, since the $\{c_i\}$ generate $H^1(C_t, \C)$ and the intersection pairing is non-degenerate, for every non-zero $\beta \in F$ there exist an $i$ such that $(\beta, c_i) \neq 0$. Since $F$ is a $\rho$-invariant subspace, it follows that the the image of $\beta$ under the Picard--Lefschetz monodromy transformation along $h_i$, which is $\rho_{h_i}(\beta)=PL_{h_i}(\beta)=-\beta+(\beta,\alpha_{i})\alpha_{i}$, lies in $F$, and thus
\[
\alpha_{i} \in F.
\]
Hence $F$ and its orthogonal complement $F^\perp$, which also is monodromy invariant, are two symplectic vector spaces. Set
\[
2n=\dim F, \quad 2m=\dim F^\perp.
\]
The above argument also shows that every vanishing cycle $ c_i$ lies either in $F$ or in $F^\perp$. In particular, we can decompose
\[
R:=\ker[\pi_1(N_t) \to \pi_1(N)]=R_F\oplus R_{F^\perp},
\]
where $R_F$ and $R_{F^\perp}$ are the (non--degenerate) sublattices generated by the vanishing cycles that lie in $F$ and $F^\perp$ respectively.
Since $R\subset H^1(C_t, \Z)$ has index two and $( \cdot, \cdot)$ is unimodular, it follows that the determinant of the intersection matrix for $R$ is $4$ (cf. \cite{BHPV} $\S$ 1.2). Since $R_F$ and $R_{F^\perp}$ are symplectic lattices, the determinants of their intersection matrices are squares and hence, up to switching $F$ and $F^\perp$, we may assume that one determinant is $1$ and the other is $4$. In particular, up to switching $F$ and $F^\perp$,  we may assume that $R_F \subset H^1(C_t, \Z)$ is primitive. Set $F_\Z:=R_F$ and $E_\Z:=(F_\Z)^\perp$,  where the orthogonal complement is taken in $H^1(C_t, \Z)$ so that we have decomposed
\be
H^1(C_t, \Z)=F_\Z \oplus E_\Z.
\ee
as a direct sum of two primitive lattices.
Consider now the following commutative diagram, 
\[
\xymatrix{
H^1(C_t, \Z) \ar@{^{(}->}[r]^j \ar@{^{(}->}[d] & H^1(C_t, {\mc O}_{C_t}) \ar[d]_D^\sim\\
H^1_{dR}(C_t, \C) \ar@{->>}[r]_p & H^{0,1}_{\overline \partial }(C_t)
}
\]
where the left hand side vertical arrow is the composition of the base change inclusion $H^1(C_t, \Z)\subset H^1(C_t, \C)$, and the De Rham isomorphism. The top horizontal arrow is given by the exponential sequence, the right hand side vertical arrow is the Dolbeaut isomorphism, and the bottom arrow is the projection onto the Dolbeaut group.
The two spaces
\[
F':=D^{-1}p(F) \quad \text{ and} \quad  E':=D^{-1}p(E),
\]
contain the lattices $j(F_\Z)$ and $j(E_\Z)$, of rank respectively equal to $2n$ and $2m$ and are thus of dimension equal to $n$ and $m$ respectively. It follows that
\[
F_t:= F'/j(F_\Z), \quad \text{ and }E_t:= E'/j(E_\Z),
\]
are two smooth abelian varieties of dimensions equal to $n$ and $m$ respectively. They are both principally polarized since $F_\Z$ and $E_\Z$ are unimodular. In particular
\be \label{decomposition ppav}
\Jac^0(C_t) \cong F_t \times E_t.
\ee
However, since the intersection product on $H^1(C_t, \Z)$ can be viewed, via the isomorphism $\wedge^2 H^1(C_t, \Z) \cong H^2(\Jac^0(C_t) , \Z)$, as the theta divisor of $\Jac^0(C_t)$, it follows that (\ref{decomposition ppav}) is actually a decomposition as principally polarized abelian varieties. We have thus reached a contradiction since the Jacobian variety of a smooth curve is irreducible as a principally polarized abelian variety.

\end{proof}

\subsection{The genus two case} \label{genus two case} 
When $|C|$ is a genus two linear system on a general Enriques surface $T$, we get a  Calabi-Yau three-fold whose Hodge numbers are described by the following theorem.

\begin{thm} \label{genus two thm}
Let $|C|$ be a genus two linear system on a general Enriques surface $T$. Then $\nu: N \to \P^1=|C|$  has exactly $16$ singular fibers, each of which is a rank one degeneration of an abelian surface, $\pi_1(N)=\Z/(2)$, and the Hodge diamond of $N$ is the following:
\[
\begin{array}{cccccc}
&1&&\\
&0\,\,\,\,\,\,0&&\\
\,\,\,\,0&10&0\,\,\,\,&\\
1\,\,\,\,&10\,\,\,\,\,\,10&\,\,\,\,1&\\\
\end{array}
\]
The singular fibers of the natural abelian surface fibration on $\wt N$ are of the same kind, and $\wt N$ has the same Hodge numbers.
\end{thm}
\begin{proof}
Since $N$ is smooth and $|C|$ is one-dimensional, the support morphism is flat and thus Theorem \ref{fundamental group} holds unconditionally.
As for the singular fibers, this follows from Proposition \ref{discriminant locus} and the fact that the Jacobian of a union of two smooth curves meeting transversally in one point is smooth and projective and the fact that the compactified Jacobian of an irreducible nodal curve is a rank one degeneration of an abelian variety. As for the second Betti number, the only difference is that now
\[
H^1(\mc C_U)\cong H^1(N_U)\cong H^1(U)=\C^{17},   \quad \text{ and } \quad  H^2(\mc C)\cong \C^{12}.
\]
Moreover,
\[
\begin{aligned}
h^2(\Nt, \Nt_U)=h^2(Y, Y_U)=h^2(N, N_U)&=\#\text{ of irreducible components of } N_{\Delta}=18,\\
h^2(\mc C, \mc C_U)&=\#\text{ of irreducible components of } \mc C_{\Delta}=20.
\end{aligned}
\]
so that (notation as above)  $\ell-k+1=3$ and $j-k+1=1$. Hence
\[
W_2(H^2( N_U))=W_2(H^2(\mc C_U))=\C^{9}
\]
and
\[
H^2(N)\cong \C^{10}.
\]

To compute the remaining Hodge number $H^{2,1}(N)$, it is sufficient to notice that $\chi_{top}(N)=0$ and hence $2H^2+2=H^3=3+H^{2,1}$. This follows from the fact that the all the fibers of $N  \to |C|$ have trivial topological Euler characteristic.

\end{proof}

The fact that the second Betti number group of this three dimensional moduli space is $10$ reminds of what happens in the case of K3 surfaces. Indeed, the second Betti number of the higher dimensional examples is equal to $23=b_2(K3)+1$, whereas that of the two dimensional moduli spaces is equal to $22$.

\bibliography{bibliografiagiulia}
\bibliographystyle{alpha}
\end{document}